\newtheorem{lemma}{Lemma}
\newtheorem{theorem}{Theorem}
\newtheorem{claim}{Claim}
\newtheorem{observation}{Observation}
\theoremstyle{definition}
\newcommand{\ru}[1]{\textbf{R#1}}
\newcommand{\db}[2]{#1\!\leftrightarrow\!#2}
\newcommand{\gs}[0]{$G[S]^2=G^2[S]$}
\newcommand{\smallqed}{{\tiny $\left(\Box\right)$}}
\newcommand{\figureproof}[1]{\noindent\emph{Proof of #1.} }
\tikzstyle{graphnode}=[draw,shape=circle,draw=black,minimum size=0.5pt,inner sep=1.5pt]
\DeclareMathOperator{\mad}{mad}
\providecommand\phantomcaption{\caption@refstepcounter\@captype}
\title{Computer assisted discharging procedure on planar graphs: application to 2-distance coloring}
\author[1]{Hoang La}
\author[1]{Petru Valicov}
\affil[1]{LIRMM, Université de Montpellier, CNRS, Montpellier, France}
\begin{document}
\maketitle

\begin{abstract}
Using computational techniques we provide a framework for proving results on subclasses of planar graphs via discharging method. The aim of this paper is to apply these techniques to study the 2-distance coloring of planar subcubic graphs. Applying these techniques we show that every subcubic planar graph $G$ of girth at least 8 has 2-distance chromatic number at most 6.
\end{abstract}

The discharging method is a very common tool used for proving coloring results on sparse graphs. At heart, it is a counting argument that guarantees the existence of (easily) colorable structures in a given sparse graph. Such structures are commonly named \emph{reducible configurations} as they cannot appear in a minimal counterexample to a desired theorem. A typical counting argument in the discharging method consists in translating the global sparseness of the graph into local weights, called \emph{charges}. For instance, a charge can be the degree of a vertex or the size of a face (when the graph is planar). The goal then is to obtain, through a clever redistribution of these charges, a contradiction by showing that there exists a reducible configuration in a minimal counterexample. This redistribution is done via \emph{discharging rules}. See the survey of Cranston and West~\cite{cw17} for more detailed explanation.

The limit of this method is achieved when one needs to consider a large amount of case distinctions in a proof. This happens essentially for two main reasons: the coloring of a configuration involves a complicated case analysis, or the set of reducible configurations needed in the proof is (too) large. Hence, using computer assistance seems to be the most natural way to overcome this hurdle. Showing that a configuration is reducible is very dependent on the type of coloring. On the other hand, generating a set of unavoidable configurations is more dependent on the class of graphs. The most famous example of computer assistance in discharging is the proof of the Four Color Theorem~\cite{AH77,AHK77,ROBERTSON19972}. In this paper, we present an algorithm that, given a particular set of discharging rules, generates all to-be-reduced configurations for planar graphs. We implemented this algorithm and applied it to show the $2$-distance colorability of a subclass of subcubic planar graphs. The source code can be found at \url{https://gite.lirmm.fr/discharging/planar-graphs}.

Before going into the details of 2-distance coloring problems, we wish to highlight that, even though the majority of the paper deals with the technicality of this particular problem, our algorithm is independent from the coloring problem.

A $2$-distance $k$-coloring of a graph $G=(V,E)$ is a map $\phi:V\rightarrow\{1,2,\dots,k\}$ such that no pair of vertices at distance at most 2 receives the same color $c\in\{1,2,\dots,k\}$. Wegner~\cite{wegner} conjectured that subcubic planar graphs are 7-colorable. This conjecture was proved by two independent group of authors, the first one using a graph decomposition (Thomassen~\cite{tho18}), the second one using a computer-assisted discharging method (Hartke \textit{et al.}~\cite{har16}). The authors in~\cite{har16} used computer assistance to $2$-distance color a given set of large configurations. Our approach differs as we use the computer in order to generate the set of configurations needed to be reduced (according to the discharging rules) instead. Moreover, for our specific problem, the reducible configurations are not always colorable by computer with a naive exhaustive precoloring extension algorithm (see discussion after \Cref{lemma:reducible_cycles}).

The $2$-distance colorability of planar graphs with high girth is extensively studied in the literature. See \cite{la20212distance} for a detailed state of art. We focus on the case of subcubic graphs. In 2008, Cranston and Kim proved the following result:

\begin{theorem}[\!\!\cite{CK08}]
\label{thm:girth9}
Let $G$ be a planar subcubic graph with girth $g\geq 9$. Then $\chi^2(G)\leq 6$.
\end{theorem}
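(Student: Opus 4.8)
The plan is to prove this by contradiction using the discharging method, which is the standard technique for 2-distance coloring on sparse planar graphs, and which aligns with the computational framework the paper is building.

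The plan is to argue by contradiction via the discharging method. I would let $G$ be a counterexample minimizing $|V(G)|+|E(G)|$: a planar subcubic graph of girth $g\geq 9$ with $\chi^2(G)>6$, while every smaller such graph is $2$-distance $6$-colorable. First I would record the easy global reductions. We may assume $G$ is connected. It has minimum degree at least $2$: a degree-$1$ vertex $v$ has at most three vertices within distance $2$ in $G$ (its neighbor $u$ and the at most two further neighbors of $u$), so any $2$-distance $6$-coloring of $G-v$, which exists by minimality, extends to $v$. Thus $\delta(G)\ge 2$ and every vertex has degree $2$ or $3$.

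The core of the argument is a family of reducible configurations that constrain how degree-$2$ vertices sit inside $G$. I would aim to prove, for instance, that $G$ contains no two adjacent degree-$2$ vertices, no thread (maximal path of consecutive degree-$2$ vertices) of length exceeding some absolute constant, and no degree-$3$ vertex incident to too many degree-$2$ neighbors or short threads. Each such lemma is established by deleting the offending vertices or edges, using minimality to $2$-distance $6$-color the smaller graph, and extending the coloring. The subtlety here is that a degree-$2$ vertex may see all six colors on its (at most six) vertices within distance $2$, so extension is not automatic: one must sometimes recolor a neighbor or a second-neighbor to free a color, while checking that no new conflict is created. Because $g\geq 9$ forces the relevant neighborhoods to induce forests (no short cycle can interfere), these recolorings organize into finitely many cases; this is precisely the step well suited to computer verification, and indeed the paper notes that such configurations are not always colorable by a naive precoloring-extension routine.

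With the reducible configurations in hand, I would run the discharging on the planar embedding. Assign to each vertex $v$ the charge $\mu(v)=d(v)-4$ and to each face $f$ of length $\ell(f)$ the charge $\mu(f)=\ell(f)-4$, so that $\sum_v\mu(v)+\sum_f\mu(f)=-8$ by Euler's formula. Since $g\geq 9$, every face has charge at least $5$, whereas every vertex has negative charge ($-1$ for degree $3$, $-2$ for degree $2$). I would then design rules sending charge from the charge-rich large faces across their incident edges to the needy vertices, for example each face donating a fixed amount to each incident degree-$2$ vertex and a smaller amount to each incident degree-$3$ vertex. Using the structural lemmas to bound how often a face must pay out and how much deficit accumulates along a thread, I would verify that after redistribution every vertex and every face ends with nonnegative charge, contradicting $\sum\mu=-8$.

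The hard part is twofold and intertwined. First, pinning down the exact set of reducible configurations so that it is simultaneously reducible and unavoidable: with only $\Delta+3=6$ colors the extension arguments are tight, a plain "count the forbidden colors" bound does not suffice, and one is forced into genuine recoloring arguments with a large case analysis. Second, calibrating the discharging rules so that the surplus on the long faces exactly absorbs every vertex deficit under the constraints the lemmas supply. Balancing these two demands—making the reducible family just rich enough for the discharging to close, while keeping every member genuinely reducible—is where the real effort lies, and is exactly what the paper's computer-assisted generation of the to-be-reduced configurations is designed to automate.
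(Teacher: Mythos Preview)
This theorem is not proved in the paper: it is quoted from Cranston and Kim~\cite{CK08} as prior work, and the paper's contribution is the improvement to girth $8$ (Theorem~\ref{thm:girth8}). So there is no ``paper's own proof'' to compare against here.

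That said, your outline is only a plan, not a proof. You correctly identify the discharging architecture, but the content is left entirely unspecified: you say you ``would aim to prove'' certain reducibility lemmas without stating which ones or proving any of them, and you propose discharging rules only in the abstract (``each face donating a fixed amount\ldots'') without checking that any concrete rule set actually closes. For a result this tight---six colors with $\Delta=3$---the entire difficulty is in those specifics, as you yourself note at the end. A referee would not accept this as a proof.

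One concrete mismatch worth flagging: the paper does reference the Cranston--Kim discharging when setting up its own argument, and the charge distribution used there (and reused in Section~\ref{subsec:discharging_vertices}) is $\mu(v)=\tfrac{7}{2}d(v)-9$ and $\mu(f)=d(f)-9$, not the $d(v)-4$, $\ell(f)-4$ scheme you propose. With your weights, faces of length $9$ carry charge $5$ while every vertex is negative; with theirs, $3$-vertices start positive ($\mu=\tfrac{3}{2}$) and only $2$-vertices and small faces need help. The latter balance is what makes the vertex-to-vertex rules \ru0--\ru2 work, and it is not obvious your alternative weighting leads to an equally clean rule set.
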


Note that their result also applies for the list version of the problem. We improve \Cref{thm:girth9} by lowering the bound on the girth. Our proof relies heavily on the assumption that the colors are taken from the same set of six colors, thus it does not seem to be extendable to the list version of the problem.

\begin{theorem}
\label{thm:girth8}
Let $G$ be a planar subcubic graph with girth $g\geq 8$. Then $\chi^2(G)\leq 6$.
\end{theorem}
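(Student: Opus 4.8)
The plan is to argue by contradiction using the discharging method, following the computer-assisted framework advertised in the introduction. I would let $G$ be a minimal counterexample (with respect to, say, $|V|+|E|$) to the statement: a planar subcubic graph of girth at least $8$ with $\chi^2(G)>6$. Minimality means every proper subgraph of $G$ admits a $2$-distance $6$-coloring, which is exactly what allows reducible configurations to be forbidden. The first step is to establish basic structural facts about $G$: it is connected, has minimum degree at least $2$ (an isolated vertex or a degree-$1$ vertex is trivially reducible since its $2$-neighborhood is small), and one can usually push further to show there are no adjacent vertices of degree $2$, or bound the length of paths of degree-$2$ vertices. These small reductions come from the observation that a vertex $v$ has at most $\deg(v)+\deg(v)(\Delta-1)$ vertices within distance $2$, and with $\Delta=3$ this count is small enough that a carefully chosen low-degree structure leaves a free color.

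Next I would set up the discharging. Assign each vertex $v$ charge $\deg(v)-4$ and each face $f$ charge $\ell(f)-4$, where $\ell(f)$ is the face length; by Euler's formula the total charge is $-8$ (negative), so at least one vertex or face ends with negative charge after redistribution. Since $G$ is subcubic, vertices have charge at most $-1$, and since girth is at least $8$, faces have charge at least $4$, so faces are the sources of positive charge and degree-$2$ and degree-$3$ vertices are the sinks. I would design discharging rules sending charge from faces (or from vertices via faces) so that every face retains nonnegative charge and every vertex reaches at least $0$; the failure of this to balance out is precisely what forces one of the finitely many configurations to appear. This is where the computer comes in: given the fixed set of rules, the algorithm described in the paper enumerates every local configuration around a vertex or face that could end with negative charge — these are the to-be-reduced configurations.

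The heart of the proof, and the main obstacle, is then showing that each generated configuration is reducible, i.e. cannot occur in $G$. For each such configuration $H$ one removes an appropriate small set of vertices, invokes minimality to $2$-distance $6$-color $G$ minus that set, and argues that the coloring extends to all of $H$. The subtlety flagged in the excerpt — that a naive exhaustive precoloring-extension is insufficient — means the extension argument is genuinely delicate: one must exploit that all six colors come from a common palette, count forbidden colors in the closed second neighborhood of each uncolored vertex, and sometimes recolor already-colored vertices or choose the order of extension cleverly so that enough freedom is preserved (this is presumably where \Cref{lemma:reducible_cycles} and the surrounding discussion are used). The key quantitative lever is that a vertex of degree $d$ in a subcubic graph has at most $d + d(d-1) \le 3 + 6 = 9$ vertices in its $2$-neighborhood, but in the sparse high-girth setting the actual number of \emph{colored} constraints on a vertex being extended is strictly smaller than $6$ for the relevant configurations, leaving a valid color. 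I expect the degree-$2$ vertices and long induced paths to require the most careful case analysis, since they carry the most positive charge demand and their colorings interact across the configuration.

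Finally, once every configuration produced by the algorithm is shown to be reducible, none of them can appear in $G$; but the discharging argument proved that at least one must appear, yielding the desired contradiction and establishing that no counterexample $G$ exists. The confidence in completeness of the configuration list rests entirely on the correctness of the enumeration algorithm, so I would also want a clear statement (proved or cited from the algorithmic section) that the generated set is genuinely unavoidable given the chosen rules, closing the logical loop between the human discharging design and the machine-generated reducibility obligations.
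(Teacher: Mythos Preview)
Your high-level outline---minimal counterexample, structural lemmas, discharging, computer-generated unavoidable set, reducibility---matches the paper's architecture, but the concrete discharging setup you chose diverges from the paper in a way that matters. You assign $\deg(v)-4$ and $\ell(f)-4$, making all vertices negative and all faces (of length $\ge 8$) positive, so charge flows from faces to vertices and the enumeration would be around vertices. The paper instead uses $\mu(v)=\tfrac{7}{2}d(v)-9$ and $\mu(f)=d(f)-9$, so $3$-vertices start with $+\tfrac{3}{2}$, $2$-vertices with $-2$, and only $8$-faces are negative (at $-1$); all longer faces are already nonnegative. This choice is deliberate: it lets the paper reuse Cranston and Kim's vertex-to-vertex rules (\ru0--\ru2) verbatim to bring every vertex to $\ge 0$, and then isolates \emph{all} remaining deficit on $8$-faces. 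The computer's job is thus not ``enumerate everything that might be negative'' but specifically ``enumerate every possible neighborhood of an $8$-face'' via the full-word encoding, check which contain a forbidden subword (reducible configuration), and check which receive at least $1$ from incident $3$-vertices under \ru3--\ru4. A single surviving face type (\texttt{1c1a0a1a0a}) then requires a third, face-to-face rule \ru5.

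So your plan is not wrong in spirit, but it underspecifies the part that actually makes the proof go through: the charge function is chosen to funnel the problem onto a single face length, the discharging is staged in three rounds (vertices$\to$vertices, vertices$\to$faces, faces$\to$faces), and the computer enumerates \emph{faces}, not vertex neighborhoods. Your remark that naive precoloring extension fails is correct and the paper handles it exactly as you guess---by recoloring, restricting lists, and exploiting the common palette---but the list of configurations (\Cref{fig:reducible,fig:special_reducible,fig:first_reducible_cycles,fig:reducible_cycles}) and their reductions are the bulk of the work and are not something one can wave at.
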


The proof is done by induction on the order of the graph using the discharging method. We will assume a minimum counterexample and show a set of reducible configurations which it cannot contain (\Cref{sec:structural_properties}). Then, using Euler's formula, we define a distribution of charges on the vertices and faces of this hypothetical counterexample such that the total amount of charges is negative. In order to obtain a non-negative total amount of charges on the vertices, we use the same distribution of charges on the vertices as in the proof of \Cref{thm:girth9} (\Cref{subsec:discharging_vertices}). With this distribution, the only faces with negative charge are of length 8. With the assistance of a computer program, we list each possible close neighbourhoods around a face of length 8. For each of these neighborhoods, our algorithm shows that either it contains a reducible configuration or it can get enough charge from its incident vertices (\Cref{subsec:second_round_discharging}). This leads to a contradiction.

In \Cref{sec4}, we discuss the tightness of \Cref{thm:girth8} and possible extensions. Finally, in \Cref{sec5}, we explain how to use our algorithm to solve problems on other subclasses of planar graphs.

\paragraph{Notations:} In the following, we only consider plane graphs that is planar graphs together with their embedding into the plane. For a plane graph $G$, we denote $V$, $E$, $F$ the sets of vertices, edges and faces respectively. We denote $d(v)$ (resp. $d(f)$) the degree of vertex $v\in V$ (resp. the size of face $f\in F$).

Some more notations:
\begin{itemize}
\item A \emph{$d$-vertex} is a vertex of degree $d$.
\item A \emph{$d$-face} is a face of size $d$.
\item A \emph{$k$-path} is a path of length $k+1$ where the $k$ internal vertices are 2-vertices.
\item A \emph{$(k_1,k_2,k_3)$-vertex} is a 3-vertex incident to a $k_1$-path, a $k_2$-path and a $k_3$-path.
\end{itemize}

Recall that in the whole paper we do a 2-distance 6-coloring. Thus, for a vertex $v$, we denote $L(v)$ the set of available colors from $\{a,b,c,d,e,f\}$.
For convenience, in the figures a vertex $v$ will be represented by a circle labeled $v$. Additionally, when a lower bound on $|L(v)|$ is known, it will be depicted on the figure. For example, the graph depicted in \Cref{subfig:config1} is a path $v_1v_2v_3v_4$ with the following size of lists of available colors: $|L(v_1)|\geq 2$, $|L(v_2)|\geq 3$, $|L(v_3)|\geq 2$, $|L(v_4)|\geq 2$.

We will also say that a vertex $u$ \textit{sees} another vertex $v$ if $v$ is at distance at most 2 from $u$.

\section{Useful observations and lemmata}
\label{sec:useful}
Here we show some colorable and non-colorable configurations, that is graphs together with lists of available colors for each vertex.
These observations will be extensively used in \Cref{sec:structural_properties}.

\begin{lemma}
\label{lemma:observations}
The graphs depicted in \Crefrange{subfig:config1}{subfig:config15} are 2-distance colorable.
\end{lemma}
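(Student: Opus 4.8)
The statement to prove is \Cref{lemma:observations}: a list of small configurations (paths and similar structures with specified list sizes) are all 2-distance colorable. Since these are explicit finite graphs with explicit lower bounds on list sizes, my plan is essentially a uniform greedy/counting argument applied configuration by configuration, rather than anything clever.

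\paragraph{Plan.}
The plan is to handle each configuration by a \emph{list-coloring greedy argument} tailored to the graph's structure. For a 2-distance coloring, the relevant constraint is that the coloring must be proper on the \emph{square} $G^2$ of the graph, so for each vertex $v$ the forbidden colors are those already used on vertices within distance $2$ of $v$. The key quantity is therefore, for each $v$, the number of already-colored neighbors-within-distance-$2$ compared against $|L(v)|$. First I would observe that for a path $v_1v_2v_3v_4$, vertex $v_i$ sees only $v_{i-2},\dots,v_{i+2}$ (intersected with the path), so in a short path each vertex sees at most a bounded number of others. I would then order the vertices so that when I color each vertex greedily, the number of previously-colored vertices it sees is strictly less than its list size, guaranteeing an available color at every step.

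\paragraph{Key steps.}
Concretely, for the sample configuration \Cref{subfig:config1} (the path $v_1v_2v_3v_4$ with $|L(v_1)|\ge 2$, $|L(v_2)|\ge 3$, $|L(v_3)|\ge 2$, $|L(v_4)|\ge 2$), I would color in an order that respects the list sizes: color the endpoints or low-list vertices when they still have slack, and color the high-degree-in-$G^2$ vertex (here $v_2$, which sees $v_1,v_3,v_4$) early while its constraints are light, or last when its larger list absorbs the constraints. The general recipe for each of the fifteen configurations is: (1) identify, for each vertex $v$, the set of vertices it sees (distance $\le 2$) within the configuration; (2) find an elimination/coloring order in which each vertex, at the moment it is colored, sees fewer already-colored vertices than $|L(v)|$; (3) conclude that greedy coloring along this order never gets stuck. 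For the more structured configurations (those built from $k$-paths hanging off a $3$-vertex) I would exploit that the $2$-vertices internal to a $k$-path have small neighborhoods and so are easy to color last, leaving only the branch vertex to check carefully.

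\paragraph{Main obstacle.}
I do not expect any single configuration to be conceptually hard; the real work is bookkeeping and finding, for each of the fifteen cases, an order in which the greedy bound holds — which is exactly the kind of case analysis the paper delegates to a computer elsewhere. The main obstacle will be the configurations where a naive endpoint-to-endpoint order fails because some interior vertex sees too many already-colored vertices relative to its list; there I expect to need a more careful order (for instance coloring a constrained middle vertex before its far neighbors, or interleaving). A secondary subtlety is that the $2$-distance constraint couples vertices two apart, so two vertices that are non-adjacent but share a common neighbor still conflict; I must be careful to count \emph{all} seen vertices, not just graph-neighbors, when verifying the greedy bound. Once a valid order is exhibited for each configuration, colorability follows immediately, so I expect the proof to reduce to presenting these orders (likely supported by the figures) and a one-line greedy justification per case.
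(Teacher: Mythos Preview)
Your proposed approach has a genuine gap: a pure greedy ordering does not exist for these configurations, not even for the simplest one. Take \Cref{subfig:config1}, the path $v_1v_2v_3v_4$ with list sizes $2,3,2,2$. In the square $G^2$ the degrees are $2,3,3,2$ respectively, so \emph{no} vertex has list size strictly exceeding its $G^2$-degree. Consequently no vertex can be colored last in a greedy order, and your step (2) --- ``find an elimination/coloring order in which each vertex, at the moment it is colored, sees fewer already-colored vertices than $|L(v)|$'' --- is impossible here. The same tightness occurs throughout: for instance in \Cref{subfig:config13} (lists $2,2,3,4,2,2$ on a $6$-path) every vertex has list size at most its $G^2$-degree.

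The paper's proof therefore uses several devices beyond greedy ordering, and you will need them too. The recurring tricks are: (a) exploiting that two vertices at distance $\ge 3$ do not conflict, so one may try to give them the \emph{same} color and thereby reduce the number of forbidden colors for vertices in between (this is how \Cref{subfig:config1} is handled, with Hall's theorem for the complementary case $L(v_1)\cap L(v_4)=\emptyset$); (b) case analysis on whether two specified lists are equal, choosing in one branch a color outside the other list; (c) \emph{restricting} a large list by removing the colors of some small list, then coloring the restricted subconfiguration first and the small-list vertices afterwards; and (d) inductive reduction, where later configurations are colored by reducing to earlier ones. Your plan as stated contains none of these ideas, and the obstacle you flagged as ``secondary'' (the squared-graph constraints making interior vertices over-constrained) is in fact the central difficulty that forces these non-greedy arguments.
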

\begin{proof}
In the proofs of this section, whenever the size of a list $|L(v)|\geq k$ we assume that $|L(v)|=k$ by arbitrarily removing the extra colors from the list. One can easily observe that these proofs will hold for the case when $|L(v)|> k$. 

We will give the proofs for each figure in order:

\figureproof{\Cref{subfig:config1}}
If $v_1$ and $v_4$ can be colored with the same color, then finish by coloring $v_2$, $v_3$ in this order. Otherwise, since $L(v_1)\cap L(v_4)=\emptyset$ we have $|L(v_1)\cup L(v_4)|\geq 4$, so one can apply Hall's Theorem. 
\hfill\smallqed\medskip

\figureproof{\Cref{subfig:config2}}
If $L(v_4)\neq L(v_5)$, then color $v_5$ with $x\notin L(v_4)$ and get \Cref{subfig:config1}, so we are done. Otherwise, color $v_3$ with a color $y\notin L(v_5) \cup L(v_4)$. Then color $v_1$, $v_2$, $v_4$, $v_5$, in this order.
\hfill\smallqed\medskip

\figureproof{\Cref{subfig:config3}}
If $L(v_1)\neq L(v_3)$, then we color $v_1$ with $x\notin L(v_3)$ and get \Cref{subfig:config2}. Otherwise, color $v_2$ with a color $y\notin L(v_3) \cup L(v_1)$, then color $v_3$, $v_4$, $v_5$, $v_6$ using \Cref{subfig:config1} and finish by coloring vertex $v_1$.
\hfill\smallqed\medskip

\figureproof{\Cref{subfig:config4}}
Observe that $L(v_3)=L(v_4)$ because if not we color $v_4$ with $x\notin L(v_3)$ and we get \Cref{subfig:config1}. Thus color $v'_3$ with $y\notin L(v_3)$ and get \Cref{subfig:config1} again.
\hfill\smallqed\medskip

\figureproof{\Cref{subfig:config5}}
If $L(v_2)\neq L(v_4)$, then one could color $v_4$ with $x\notin L(v_2)$, then by \Cref{subfig:config1} we are done. Otherwise, since $|L(v'_3)|\geq 3$, color $v'_3$ with a color $y\notin L(v_4) \cup L(v_2)$. Then again by \Cref{subfig:config1} we are done.
\hfill\smallqed\medskip

\figureproof{\Cref{subfig:config6}}
Observe that there exists $ x\in L(v'_3)\setminus L(v_2)$. Thus $x\in L(v_4)$ as otherwise one could color $v'_3$ with $x$ and get \Cref{subfig:config2}. Hence $x\in L(v_5)$, as otherwise one could color $v_4$ with $x$, color vertices $v_1,v_2,v_3,v'_3$ by \Cref{subfig:config1} and finish by coloring vertex $v_5$. Therefore, we color $v'_3$ and $v_5$ with $x$ and we get \Cref{subfig:config1}.
\hfill\smallqed\medskip

\figureproof{\Cref{subfig:config7}}
First observe that $L(v_1)\subset L(v'_2)$. Otherwise, by coloring $v_3$ with $x\notin L(v_1)$ and coloring $v_4$, $v'_3$ and $v_2$ in this order, one could finish with vertices $v_1$ and $v'_2$ which see the same colored vertices while $L(v_1)\not\subset L(v'_2)$. Now, suppose $L(v_3)\neq L(v'_2)$ and color vertex $v_3$ with $y\notin L(v'_2)\supset L(v_1)$. Then color $v_4$, $v'_3$, $v_2$, $v_1$, $v'_2$ in this order. Therefore $L(v_3)=L(v'_2)\supset L(v_1)$ and we color $v_2$ with $z\notin L(v_3)$ and finish by coloring $v_4$, $v'_3$, $v_3$, $v_1$, $v'_2$ in this order.
\hfill\smallqed\medskip

\figureproof{\Cref{subfig:config8}}
First note that $L(v_4)=L(v_5)$ as otherwise by coloring $v_5$ with $x\notin L(v_4)$ we get \Cref{subfig:config7}. If $L(v_5)\subset L(v'_3)$, then we color vertex $v_3$ with $y\notin L(v'_3)$ and $v_1$, $v'_2$, $v_2$, $v_4$, $v_5$, $v'_3$ in this order. We conclude that $|L(v'_3)\setminus L(v_5)|\geq 2$. Thus by replacing $L(v'_3)$ with $L(v'_3)\setminus L(v_5)$ and $L(v_3)$ with $L(v_3)\setminus L(v_5)$, we can color vertices $v_1$, $v_2$, $v'_2$, $v_3$, $v'_3$ by \Cref{subfig:config5} and finish by coloring vertices $v_4$ and $v_5$.
\hfill\smallqed\medskip

\figureproof{\Cref{subfig:config9}}
Suppose $L(v_2)\neq L(v'_3)$. Then restrict the list of colors of $v_3$ to $L(v_3)\setminus L(v_1)$, color vertices $v_3$, $v_4$, $v'_4$, $v_5$ and $v_6$ by \Cref{subfig:config5} and finish by coloring $v'_3$, $v_2$ and $v_1$ in this order.
Therefore, we have $L(v_2) = L(v'_3)$. Now, if $L(v_5)\neq L(v_6)$, then we color vertex $v_4$ with $x\notin L(v'_3)$, color $v_5$ and $v_6$ (because theirs lists are different) and finish by coloring $v'_4$, $v_3$, $v_1$, $v_2$ and $v'_3$ in this order.
Thus  we have $L(v_5) = L(v_6)$. Color vertex $v_3$ with $y\notin L(v_2)=L(v'_3)$. If $y\in L(v_6)$, then color vertex $v_6$ with $y$ and finish by coloring $v_5$, $v'_4$, $v_4$, $v_1$, $v_2$, $v'_3$ in this order. If $y\notin L(v_6)=L(v_5)$, then color $v'_4$, $v_4$, $v_5$, $v_6$ by \Cref{subfig:config1} and finish by coloring $v_1$, $v_2$, $v'_3$ in this order.
\hfill\smallqed\medskip

\figureproof{\Cref{subfig:config10}}
If $L(v_1)\not\subset L(v_2)$, then by coloring $v_1$ with $y\notin L(v_2)$ we get \Cref{subfig:config8}. Hence, we have w.l.o.g. $L(v_1)=\{a,b\}$ and $L(v_2)=\{a,b,c\}$.

If $L(v_2)\not\subset L(v_3)$, then we restrict $L(v_3)$ to $L(v_3)\setminus L(v_2)$. Observe that $|L(v_3)\setminus L(v_2)|\geq 3$. Now, we look at the two following cases:
\begin{itemize}
\item When $L(v'_3)=L(v''_3)$, we color $v_3$ with $x\notin L(v'_3)$ and then $v_5$, $v'_4$, $v_4$, $v'_3$, $v''_3$, $v_2$, $v_1$ in this order.  
\item When $L(v'_3)\neq L(v''_3)$, we color $v''_3$ with $y\notin L(v'_3)$ and we obtain \Cref{subfig:config5}. We color $v_2$ and $v_1$ last.
\end{itemize}
So, $L(v_2)\subset L(v_3)$. We can thus assume w.l.o.g. that $L(v_3)=\{a,b,c,d,e\}$. 

If $d\notin L(v'_3)\cup L(v''_3)$, then we color $v_3$ with $d$, then $v_5$, $v'_4$, $v_4$, $v'_3$, $v''_3$, $v_2$, $v_1$ in this order. The same holds for $e$. So, we must have $\{d,e\}\subseteq L(v'_3)\cup L(v''_3)$.

If $L(v'_3)=L(v''_3)$, then due to the previous observation, $L(v'_3)=L(v''_3)=\{d,e\}$. In this case, we color $v_3$ with $c$, then $v_5$, $v'_4$, $v_4$, $v'_3$, $v''_3$, $v_2$, $v_1$ in this order. As a result, $L(v'_3)\neq L(v''_3)$.

If $L(v'_3)\subset L(v_2)$, then we must have $L(v''_3)=\{d,e\}$. We then color $v_3$ with $d$, then $v''_3$, $v_5$, $v'_4$, $v_4$, $v'_3$, $v_2$, $v_1$ in this order. 

If $L(v'_3)\not\subset L(v_3)$, then $f\in L(v'_3)$. We color $v'_3$ with $f$, then $v''_3$ and $v_5$. We can then finish coloring $v_1$, $v_2$, $v_3$, $v_4$, $v'_4$ by \Cref{subfig:config2}. We can thus assume w.l.o.g that $d\in L(v'_3)$.

If $c\notin L(v'_3)$, then we color $v_2$ with $c$, $v_4$ with $x\in L(v_4)\setminus L(v'_3)$, and $v_5$, $v'_4$, $v_3$, $v_1$ in this order. We can finish by coloring $v'_3$ and $v''_3$ since $L(v'_3)\neq L(v''_3)$. So, $c\in L(v'_3)$.

To summarize the previous observations, we have $L(v_1)=\{a,b\}$, $L(v_2) = \{a,b,c\}$, $L(v_3)=\{a,b,c,d,e\}$, $L(v'_3) = \{c,d\}$ and $e\in L(v''_3)$. We color $v''_3$ with $e$. We restrict $L(v_3)$ to $\{c,d\}$. We color $v'_3$, $v_3$, $v_4$, $v'_4$, $v_5$ by \Cref{subfig:config5}. Finally, we finish by coloring $v_2$ and $v_1$ in this order. 
\hfill\smallqed\medskip

\figureproof{\Cref{subfig:config11}}
If $L(v_2)\neq L(v_1)$, then color $v_2$ with $x\notin L(v_1)$, color vertices $v'_4$, $v_4$, $v_5$, $v_6$ by \Cref{subfig:config1} and finish with $v_3$ and $v_1$. If $L(v_2)=L(v_1)$, then by restricting the list of colors of $v_3$ to $L(v_3)\setminus L(v_2)$, we color vertices $v_3$, $v_4$, $v'_4$, $v_5$, $v_6$ by \Cref{subfig:config5} and finish with $v_2$ and $v_1$.
\hfill\smallqed\medskip

\figureproof{\Cref{subfig:config12}}
Observe that $L(v_1)=L(v_2)$ since otherwise one could color $v_1$ with $x\notin L(v_2)$ and get \Cref{subfig:config6}.
Therefore, we restrict the list of colors of $v_3$ to $L(v_3)\setminus L(v_2)$. We color then $v_3$, $v_4$, $v'_4$, $v_5$, $v_6$ by \Cref{subfig:config5} and finish with $v_2$ and $v_1$.
\hfill\smallqed\medskip

\figureproof{\Cref{subfig:config13}}
If $L(v_5)\neq L(v_6)$, then by coloring $v_6$ with $x\notin L(v_5)$, one could finish by \Cref{subfig:config2}. Thus $L(v_5)=L(v_6)$ and we restrict the list of colors of $v_4$ to $L(v_4)\setminus L(v_5)$, color vertices $v_1$, $v_2$, $v_3$, $v_4$ by \Cref{subfig:config1} and finish with $v_5$ and $v_6$.
\hfill\smallqed\medskip

\figureproof{\Cref{subfig:config14}}
Observe that $L(v_1)=L(v_2)$ as otherwise by coloring $v_2$ with $x\notin L(v_1)$, one could color $v_3,v_4,v_5,v_6,v_7$ by \Cref{subfig:config2} and finish by coloring $v_1$.
Therefore, color $v_3$ with $y\notin L(v_2) \cup L(v_1)$, color $v_4,v_5,v_6,v_7$ by \Cref{subfig:config1} and finish by coloring $v_2,v_1$ in this order.
\hfill\smallqed\medskip

\figureproof{\Cref{subfig:config15}}
Note that $L(v_6)=L(v_7)$ as otherwise by coloring $v_7$ with $x\notin L(v_6)$ one could finish by \Cref{subfig:config13}. Hence color $v_5$ with $y\notin L(v_7) \cup L(v_6)$, then color $v_1,v_2,v_3,v_4$ by \Cref{subfig:config1} and finish with $v_6$, $v_7$.
\hfill\smallqed\medskip

\figureproof{\Cref{subfig:config16}}
If it is possible to color $v_1$ and $v_5$ with the same color, then after coloring $v_6$, we get \Cref{subfig:config10}. Hence $L(v_1)\cap L(v_5)=\emptyset$. If it is possible to color $v_5$ and $v'_2$ with a common color, then after coloring $v_6$, we get again \Cref{subfig:config10}. Hence $L(v'_2)\cap L(v_5)=\emptyset$. Symmetrically, we have $L(v''_3)\cap L(v_5)=\emptyset$ and $L(v'''_3)\cap L(v_5)=\emptyset$.

Now, since we are considering a 6-coloring, we restrict the list of colors of $v_3$ to $L(v_3)=L(v_5)$ and color vertices $v_3$, $v_4$, $v'_4$, $v_5$, $v_6$ by \Cref{subfig:config5}. We finish by coloring the remaining vertices in the following order: $v_1$, $v_2$, $v'_2$, $v'_3$, $v''_3$, $v'''_3$.
\hfill\smallqed\medskip

\end{proof}

\begin{figure}[!ht]
\hspace*{-1cm}
\subfloat[]
{
\label{subfig:config1}                                                 
\scalebox{0.75}{
\begin{tikzpicture}[join=bevel,vertex/.style={circle,draw, minimum size=0.6cm},inner sep=0mm,scale=0.85]
  \foreach \i / \l in {0/1,2/2,4/3,6/4}{
  	\node[vertex] (\l) at (\i,0) {$v_\l$};
  }
  \node at (0,-0.7) {$2$};
  \node at (2,-0.7) {$3$};
  \node at (4,-0.7) {$2$};
  \node at (6,-0.7) {$2$};
  \draw[-] (1) -- (2) -- (3) -- (4);
\end{tikzpicture}
}
}
\hspace*{0.4cm}
\subfloat[]
{
\label{subfig:config2}
\scalebox{0.75}{
\begin{tikzpicture}[join=bevel,vertex/.style={circle,draw, minimum size=0.6cm},inner sep=0mm,scale=0.85]
  \foreach \i / \l in {0/1,2/2,4/3,6/4,8/5}{
  	\node[vertex] (\l) at (\i,0) {$v_\l$};
  }
  \node at (0,-0.7) {$2$};
  \node at (2,-0.7) {$3$};
  \node at (4,-0.7) {$3$};
  \node at (6,-0.7) {$2$};
  \node at (8,-0.7) {$2$};
  \draw[-] (1) -- (2) -- (3) -- (4) -- (5);
\end{tikzpicture}
}
}
\hspace*{0.4cm}
\subfloat[]
{
\label{subfig:config3}
\scalebox{0.75}{
\begin{tikzpicture}[join=bevel,vertex/.style={circle,draw, minimum size=0.6cm},inner sep=0mm,scale=0.85]
  \foreach \i / \l in {0/1,2/2,4/3,6/4,8/5,10/6}{
  	\node[vertex] (\l) at (\i,0) {$v_\l$};
  }
  \node at (0,-0.7) {$2$};
  \node at (2,-0.7) {$3$};
  \node at (4,-0.7) {$2$};
  \node at (6,-0.7) {$3$};
  \node at (8,-0.7) {$3$};
  \node at (10,-0.7) {$2$};
  \draw[-] (1) -- (2) -- (3) -- (4) -- (5) -- (6);
\end{tikzpicture}
}
}

\hspace*{-1.8cm}
\subfloat[]
{
\label{subfig:config4}
\scalebox{0.7}{
\begin{tikzpicture}[join=bevel,vertex/.style={circle,draw, minimum size=0.6cm},inner sep=0mm,scale=0.85]
  \foreach \i / \l in {0/1,2/2,4/3,6/4}{
  	\node[vertex] (\l) at (\i,0) {$v_\l$};
  }
  \node at (0,-0.7) {$2$};
  \node at (2,-0.7) {$4$};
  \node at (4,-0.7) {$2$};
  \node[vertex] (3bis) at (4, 1.5) {$v_3'$};
  \node at (4, 2.2) {$3$};
  \node at (6,-0.7) {$2$};
  \draw[-] (1) -- (2) -- (3) -- (4)  (3) -- (3bis);
\end{tikzpicture}
}
}
\hspace*{0.3cm}
\subfloat[]
{
\label{subfig:config5}
\scalebox{0.7}{
\begin{tikzpicture}[join=bevel,vertex/.style={circle,draw, minimum size=0.6cm},inner sep=0mm,scale=0.85]
  \foreach \i / \l in {0/1,2/2,4/3,6/4}{
  	\node[vertex] (\l) at (\i,0) {$v_\l$};
  }
  \node at (0,-0.7) {$2$};
  \node at (2,-0.7) {$2$};
  \node at (4,-0.7) {$4$};
  \node[vertex] (3bis) at (4, 1.5) {$v_3'$};
  \node at (4, 2.2) {$3$};
  \node at (6,-0.7) {$2$};
  \draw[-] (1) -- (2) -- (3) -- (4)  (3) -- (3bis);
\end{tikzpicture}
}
}
\hspace*{0.3cm}
\subfloat[]
{
\label{subfig:config6}
\scalebox{0.7}{
\begin{tikzpicture}[join=bevel,vertex/.style={circle,draw, minimum size=0.6cm},inner sep=0mm,scale=0.85]
  \foreach \i / \l in {0/1,2/2,4/3,6/4,8/5}{
  	\node[vertex] (\l) at (\i,0) {$v_\l$};
  }
  \node at (0,-0.7) {$2$};
  \node at (2,-0.7) {$2$};
  \node at (4,-0.7) {$4$};
  \node[vertex] (3bis) at (4, 1.5) {$v_3'$};
  \node at (4, 2.2) {$3$};
  \node at (6,-0.7) {$3$};
  \node at (8,-0.7) {$2$};
  \draw[-] (1) -- (2) -- (3) -- (4) -- (5) (3) -- (3bis);
\end{tikzpicture}
}
}
\hspace*{0.3cm}
\subfloat[]
{
\label{subfig:config7}
\scalebox{0.7}{
\begin{tikzpicture}[join=bevel,vertex/.style={circle,draw, minimum size=0.6cm},inner sep=0mm,scale=0.85]
  \foreach \i / \l in {0/1,2/2,4/3,6/4}{
  	\node[vertex] (\l) at (\i,0) {$v_\l$};
  }
  \node at (0,-0.7) {$2$};
  \node at (2,-0.7) {$4$};
  \node[vertex] (2bis) at (2, 1.5) {$v_2'$};
  \node at (2, 2.2) {$3$};
  \node at (4,-0.7) {$3$};
  \node[vertex] (3bis) at (4, 1.5) {$v_3'$};
  \node at (4, 2.2) {$3$};
  \node at (6,-0.7) {$2$};
  \draw[-] (1) -- (2) -- (3) -- (4) (2) -- (2bis) (3) -- (3bis);
\end{tikzpicture}
}
}

\hspace*{-0.5cm}
\subfloat[]
{
\label{subfig:config8}
\scalebox{0.7}{
\begin{tikzpicture}[join=bevel,vertex/.style={circle,draw, minimum size=0.6cm},inner sep=0mm,scale=0.85]
  \foreach \i / \l in {0/1,2/2,4/3,6/4,8/5}{
  	\node[vertex] (\l) at (\i,0) {$v_\l$};
  }
  \node at (0,-0.7) {$2$};
  \node at (2,-0.7) {$4$};
  \node[vertex] (2bis) at (2, 1.5) {$v_2'$};
  \node at (2, 2.2) {$3$};
  \node at (4,-0.7) {$4$};
  \node[vertex] (3bis) at (4, 1.5) {$v_3'$};
  \node at (4, 2.2) {$3$};
  \node at (6,-0.7) {$2$};
  \node at (8, -0.7) {$2$};
  \draw[-] (1) -- (2) -- (3) -- (4) -- (5) (2) -- (2bis) (3) -- (3bis);
\end{tikzpicture}
}
}
\hspace*{0.3cm}
\subfloat[]
{
\label{subfig:config9}
\scalebox{0.7}{
\begin{tikzpicture}[join=bevel,vertex/.style={circle,draw, minimum size=0.6cm},inner sep=0mm,scale=0.85]
  \foreach \i / \l in {0/1,2/2,4/3,6/4,8/5,10/6}{
  	\node[vertex] (\l) at (\i,0) {$v_\l$};
  }
  \node at (0,-0.7) {$2$};
  \node at (2,-0.7) {$3$};
  \node at (4,-0.7) {$4$};
  \node[vertex] (3bis) at (4, 1.5) {$v_3'$};
  \node at (4, 2.2) {$3$};
  \node at (6,-0.7) {$4$};
  \node[vertex] (4bis) at (6, 1.5) {$v_4'$};
  \node at (6, 2.2) {$3$};
  \node at (8, -0.7) {$2$};
  \node at (10, -0.7) {$2$};
  \draw[-] (1) -- (2) -- (3) -- (4) -- (5) -- (6) (4) -- (4bis) (3) -- (3bis);
\end{tikzpicture}
}
}
\hspace*{0.3cm}
\subfloat[]
{
\label{subfig:config10}
\scalebox{0.7}{
\begin{tikzpicture}[join=bevel,vertex/.style={circle,draw, minimum size=0.6cm},inner sep=0mm,scale=0.85]
  \foreach \i / \l in {0/1,2/2,4/3,6/4,8/5}{
  	\node[vertex] (\l) at (\i,0) {$v_\l$};
  }
  \node at (0,-0.7) {$2$};
  \node at (2,-0.7) {$3$};
  \node at (4,-0.7) {$5$};
  \node[vertex] (3bis) at (4, 1.5) {$v_3'$};
  \node at (3, 1.5) {$2$};
  \node[vertex] (3ter) at (4, 3) {$v_3''$};
  \node at (4, 3.7) {$2$};
  \node at (6,-0.7) {$4$};
  \node[vertex] (4bis) at (6, 1.5) {$v_4'$};
  \node at (6, 2.2) {$3$};
  \node at (8,-0.7) {$2$};
  \draw[-] (1) -- (2) -- (3) -- (4) -- (5) (3) -- (3bis) -- (3ter) (4) -- (4bis);
\end{tikzpicture}
}
}

\hspace*{-0.5cm}
\subfloat[]
{
\label{subfig:config11}
\scalebox{0.75}{
\begin{tikzpicture}[join=bevel,vertex/.style={circle,draw, minimum size=0.6cm},inner sep=0mm,scale=0.85]
  \foreach \i / \l in {0/1,2/2,4/3,6/4,8/5,10/6}{
  	\node[vertex] (\l) at (\i,0) {$v_\l$};
  }
  \node at (0,-0.7) {$2$};
  \node at (2,-0.7) {$2$};
  \node at (4,-0.7) {$5$};
  \node at (6,-0.7) {$4$};
  \node[vertex] (4bis) at (6, 1.5) {$v_4'$};
  \node at (6, 2.2) {$2$};
  \node at (8,-0.7) {$2$};
  \node at (10,-0.7) {$2$};
  \draw[-] (1) -- (2) -- (3) -- (4) -- (5) -- (6) (4) -- (4bis);
\end{tikzpicture}
}
}\hspace*{0.5cm}
\subfloat[]
{
\label{subfig:config12}
\scalebox{0.75}{
\begin{tikzpicture}[join=bevel,vertex/.style={circle,draw, minimum size=0.6cm},inner sep=0mm,scale=0.85]
  \foreach \i / \l in {0/1,2/2,4/3,6/4,8/5,10/6}{
  	\node[vertex] (\l) at (\i,0) {$v_\l$};
  }
  \node at (0,-0.7) {$2$};
  \node at (2,-0.7) {$2$};
  \node at (4,-0.7) {$4$};
  \node at (6,-0.7) {$4$};
  \node[vertex] (4bis) at (6, 1.5) {$v_4'$};
  \node at (6, 2.2) {$3$};
  \node at (8,-0.7) {$2$};
  \node at (10,-0.7) {$2$};
  \draw[-] (1) -- (2) -- (3) -- (4) -- (5) -- (6)  (4) -- (4bis);
\end{tikzpicture}
}
}

\subfloat[]
{
\label{subfig:config13}
\scalebox{0.75}{
\begin{tikzpicture}[join=bevel,vertex/.style={circle,draw, minimum size=0.6cm},inner sep=0mm,scale=0.85]
  \foreach \i / \l in {0/1,2/2,4/3,6/4,8/5,10/6}{
  	\node[vertex] (\l) at (\i,0) {$v_\l$};
  }
  \node at (0,-0.7) {$2$};
  \node at (2,-0.7) {$2$};
  \node at (4,-0.7) {$3$};
  \node at (6,-0.7) {$4$};
  \node at (8,-0.7) {$2$};
  \node at (10,-0.7) {$2$};
  \draw[-] (1) -- (2) -- (3) -- (4) -- (5) -- (6);
\end{tikzpicture}
}
}\hspace*{0.5cm}
\subfloat[]
{
\label{subfig:config14}
\scalebox{0.75}{
\begin{tikzpicture}[join=bevel,vertex/.style={circle,draw, minimum size=0.6cm},inner sep=0mm,scale=0.85]
  \foreach \i / \l in {0/1,2/2,4/3,6/4,8/5,10/6,12/7}{
  	\node[vertex] (\l) at (\i,0) {$v_\l$};
  }
  \node at (0,-0.7) {$2$};
  \node at (2,-0.7) {$2$};
  \node at (4,-0.7) {$3$};
  \node at (6,-0.7) {$3$};
  \node at (8,-0.7) {$3$};
  \node at (10,-0.7) {$3$};
  \node at (12,-0.7) {$2$};
  \draw[-] (1) -- (2) -- (3) -- (4) -- (5) -- (6) -- (7);
\end{tikzpicture}
}
}

\centering
\subfloat[]
{
\label{subfig:config15}
\scalebox{0.75}{
\begin{tikzpicture}[join=bevel,vertex/.style={circle,draw, minimum size=0.6cm},inner sep=0mm,scale=0.85]
  \foreach \i / \l in {0/1,2/2,4/3,6/4,8/5,10/6,12/7}{
  	\node[vertex] (\l) at (\i,0) {$v_\l$};
  }
  \node at (0,-0.7) {$2$};
  \node at (2,-0.7) {$2$};
  \node at (4,-0.7) {$4$};
  \node at (6,-0.7) {$3$};
  \node at (8,-0.7) {$3$};
  \node at (10,-0.7) {$2$};
  \node at (12,-0.7) {$2$};
  \draw[-] (1) -- (2) -- (3) -- (4) -- (5) -- (6) -- (7);
\end{tikzpicture}
}
}

\centering
\subfloat[]
{
\label{subfig:config16}
\scalebox{0.75}{
\begin{tikzpicture}[join=bevel,vertex/.style={circle,draw, minimum size=0.6cm},inner sep=0mm,scale=0.8]
 \foreach[count=\l from 1] \i in {0,2,...,10}{
  	\node[vertex] (\l) at (\i,0) {$v_\l$};
  }
  \node at (0,-0.7) {$2$};
  \node at (2,-0.7) {$4$};
  \node at (4,-0.7) {$6$};
  \node at (6,-0.7) {$4$};
  \node at (8,-0.7) {$2$};
  \node at (10,-0.7) {$2$};
  
  \node[vertex] (v'2) at (2,2) {$v'_2$};
  \node at (1.2,2.4) {$3$};
  \node[vertex] (v'4) at (6,2) {$v'_4$};
  \node at (5.2,2.3) {$3$};
  
  \node[vertex] (v'3) at (4,2) {$v'_3$};
  \node at (3.1,2) {$4$};
  \node[vertex] (v''3) at (3,4) {$v''_3$};
  \node at (2.2,4) {$2$};
  \node[vertex] (v'''3) at (5,4) {$v'''_3$};
  \node at (4.2,4) {$3$};

  \draw[-] (1) -- (2) -- (3) -- (4) -- (5) -- (6);
  \draw[-] (2) -- (v'2);
  \draw[-] (3) -- (v'3) -- (v''3) (v'3) -- (v'''3);
  \draw[-] (4) -- (v'4);
\end{tikzpicture}
}
}
\caption[]{Useful 2-distance colorable configurations (\Cref{lemma:observations})}
\end{figure}
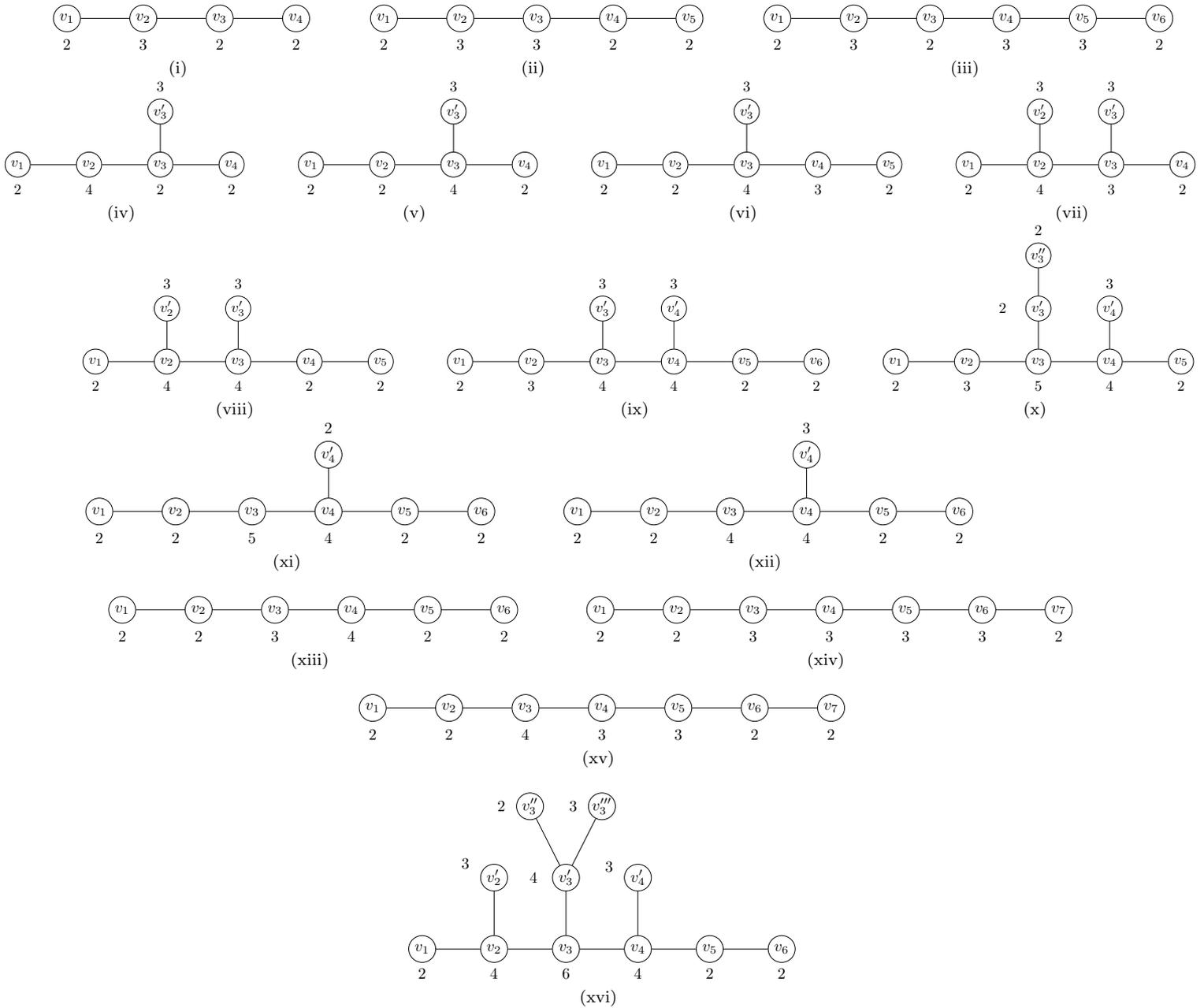

In \Crefrange{fig:forced_non_colorable_3path}{fig:forced_non_colorable_P5} we provide several useful non-colorable configurations. The important fact is that the non-colorable configurations can force the lists of colors on some vertices. 

\begin{lemma}
The graphs depicted in \Crefrange{fig:forced_non_colorable_3path}{fig:forced_non_colorable_P5} are 2-distance colorable unless their lists of available colors are exactly as indicated. 
\end{lemma}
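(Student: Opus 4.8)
The plan is to treat each figure separately and to prove the contrapositive: if the lists of available colors differ in any way from the assignment indicated in the figure, then the configuration admits a 2-distance coloring. Equivalently, I would assume that the configuration is \emph{not} 2-distance colorable and then deduce that every list must coincide exactly with the one shown. Since all the configurations here are paths (from the $3$-path through the $P_5$), the 2-distance constraint simply states that each internal vertex $v_i$ must avoid the colors of $v_{i-2},v_{i-1},v_{i+1},v_{i+2}$, so a coloring can be built along the path with a bounded look-behind.

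The main tool is the list of colorable configurations established in \Cref{lemma:observations}. For each figure, I would repeatedly attempt to color a well-chosen vertex---typically an endpoint, or the vertex carrying the largest list---with a specific color so that what remains is one of the already-colorable configurations of \Cref{lemma:observations}. Each time such a choice succeeds, the configuration is colorable, contradicting the assumption; each time it fails, the failure yields a concrete constraint on the lists: an equality $L(v_i)=L(v_j)$, a containment $L(v_i)\subseteq L(v_j)$, or the exclusion of a particular color from some list. Chaining these constraints pins down every list up to the naming of the six colors, and the surviving case is exactly the assignment drawn in the figure.

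Concretely, the argument has the same flavour as the proofs in \Cref{lemma:observations}: one exploits that two vertices which \emph{see} the same colored vertices can share a color whenever their lists are not disjoint and not nested in the wrong direction, and one invokes Hall's Theorem on the two endpoints once their lists become disjoint. Forcing an equality $L(u)=L(v)$ typically arises from the remark that, were the lists different, some color $x\in L(u)\setminus L(v)$ could be placed on $u$ to reduce to a shorter colorable path; ruling this out forces $L(u)\subseteq L(v)$, and symmetry or a size count then upgrades the containment to equality.

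The step I expect to be the main obstacle is \emph{completeness}: I must verify that the chain of forced constraints leaves no freedom beyond relabeling of colors, so that the indicated lists are the \emph{unique} obstruction up to symmetry, and separately that this indicated assignment is genuinely non-colorable. The non-colorability direction is a finite check, but the forcing direction requires careful bookkeeping to ensure that every branch in which a list deviates from the figure has been reduced to a colorable configuration, with no case silently omitted.
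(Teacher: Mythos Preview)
Your overall strategy---assume non-colorability, force constraints on the lists by repeatedly attempting reductions to the colorable configurations of \Cref{lemma:observations}, and invoke Hall's Theorem at the base cases---is exactly what the paper does. So the high-level plan is sound.

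However, there is a concrete factual error in your description: the configurations in \Cref{fig:forced_non_colorable_claw} and \Cref{fig:forced_non_colorable_clawpath} are \emph{not} paths. In both, $v_3$ has degree~$3$ (it is adjacent to $v_1$, $v_2$, and $v_4$), so your statement that ``the 2-distance constraint simply states that each internal vertex $v_i$ must avoid the colors of $v_{i-2},v_{i-1},v_{i+1},v_{i+2}$'' does not apply there. For the claw in \Cref{fig:forced_non_colorable_claw}, all four vertices pairwise see each other, and the paper dispatches it in one line via Hall's Theorem on the union $L(v_1)\cup L(v_2)\cup L(v_3)\cup L(v_4)$. For the clawpath in \Cref{fig:forced_non_colorable_clawpath}, the argument is substantially more delicate than your sketch anticipates: the paper first shows that increasing any single list size makes the configuration colorable (via \Cref{subfig:config1}, \Cref{subfig:config4}, \Cref{subfig:config5}), then argues that if $v_1,v_2,v_3,v_4$ are colorable among themselves the coloring extends to $v_5$ by a case split on whether $L(v_5)\subseteq L(v_4)$, and only then concludes via \Cref{fig:forced_non_colorable_claw}. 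The linear ``look-behind along the path'' picture you describe would miss the branching at $v_3$ and the interaction between $v_5$ and the claw.

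So: correct the shape of the two middle configurations, and be prepared for \Cref{fig:forced_non_colorable_clawpath} to require a genuine case analysis rather than a single chain of forced equalities.
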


\begin{figure}[!ht]
\centering

\subfloat[Initial configurations]
{
\label{subfig:forced_non_colorable_3path1} 
\scalebox{0.75}{
\begin{tikzpicture}[join=bevel,vertex/.style={circle,draw, minimum size=0.6cm},inner sep=0mm]
  \foreach \i / \l in {0/1,2/2,4/3}{
  	\node[vertex] (\l) at (\i,0) {$v_\l$};
  }
  \node at (0,-0.7) {$1$};
  \node at (2,-0.7) {$2$};
  \node at (4,-0.7) {$2$};
  \draw[-] (1) -- (2) -- (3);
  
  \foreach \i / \l in {0/1,2/2,4/3}{
  	\node[vertex] (\l) at (\i,-2) {$v_\l$};
  }
  \node at (0,-2.7) {$2$};
  \node at (2,-2.7) {$1$};
  \node at (4,-2.7) {$2$};
  \draw[-] (1) -- (2) -- (3);
\end{tikzpicture}
}
}
 \hspace*{1cm}
\subfloat[Forced lists of colors]
{
\label{subfig:forced_non_colorable_3path2}                                                  
\scalebox{0.75}{
\begin{tikzpicture}[join=bevel,vertex/.style={circle,draw, minimum size=0.6cm},inner sep=0mm]
  \foreach \i / \l in {0/1,2/2,4/3}{
  	\node[vertex] (\l) at (\i,0) {$v_\l$};
  }
  \node at (0,-0.7) {$L(v_1)\subseteq \{a,b\}$};
  \node at (2,-0.7) {$\{a,b\}$};
  \node at (4,-0.7) {$\{a,b\}$};
  \draw[-] (1) -- (2) -- (3);

  \foreach \i / \l in {0/1,2/2,4/3}{
  	\node[vertex] (\l) at (\i,-2) {$v_\l$};
  }
  \node at (0,-2.7) {$\{a,b\}$};
  \node at (2,-2.7) {$L(v_2)\subseteq \{a,b\}$};
  \node at (4,-2.7) {$\{a,b\}$};
  \draw[-] (1) -- (2) -- (3);
\end{tikzpicture}
}
}
\caption{A non-colorable path on 3 vertices}
\label{fig:forced_non_colorable_3path}
\end{figure}
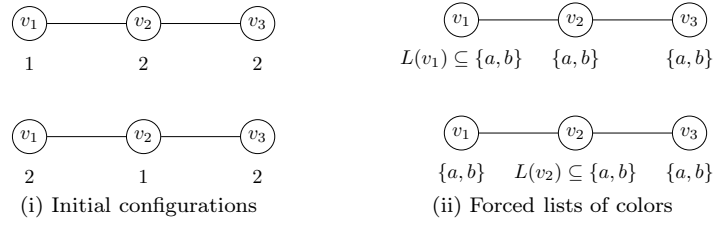

\begin{figure}[!ht]
\centering

\subfloat[Initial configuration]
{
\label{subfig:forced_non_colorable_claw1} 
\scalebox{0.75}{
\begin{tikzpicture}[join=bevel,vertex/.style={circle,draw, minimum size=0.6cm},inner sep=0mm,scale=0.9]
  \foreach \i / \l in {0/2,3/3,6/4}{
  	\node[vertex] (\l) at (\i,0) {$v_\l$};
  }
  \node[vertex] (1) at (3,1.5) {$v_1$};
  \node at (3,2.2) {$2$};
  \node at (0,-0.7) {$3$};
  \node at (3,-0.7) {$2$};
  \node at (6,-0.7) {$3$};
  \draw[-] (1) -- (3) (2) -- (3) -- (4);
\end{tikzpicture}
}
}
 \hspace*{0.3cm}
\subfloat[Forced lists of colors]
{
\label{subfig:forced_non_colorable_claw2}                                                  
\scalebox{0.75}{
\begin{tikzpicture}[join=bevel,vertex/.style={circle,draw, minimum size=0.6cm},inner sep=0mm,scale=0.9]
  \foreach \i / \l in {0/2,3/3,6/4}{
  	\node[vertex] (\l) at (\i,0) {$v_\l$};
  }
  \node[vertex] (1) at (3,1.5) {$v_1$};
  \node at (3,2.2) {$L(v_1)\subseteq \{a,b,c\}$};
  \node at (0,-0.7) {$\{a,b,c\}$};
  \node at (3,-0.7) {$L(v_3)\subseteq \{a,b,c\}$};
  \node at (6,-0.7) {$\{a,b,c\}$};
  \draw[-] (1) -- (3) (2) -- (3) -- (4);
\end{tikzpicture}
}
}
\caption{A non-colorable graph}
\label{fig:forced_non_colorable_claw}
\end{figure}

\bigskip

\begin{figure}[!ht]
\centering

\subfloat[Initial configuration]
{
\label{subfig:forced_non_colorable_clawpath1} 
\scalebox{0.75}{
\begin{tikzpicture}[join=bevel,vertex/.style={circle,draw, minimum size=0.6cm},inner sep=0mm,scale=0.9]
  \foreach \i / \l in {0/2,3/3,6/4,9/5}{
  	\node[vertex] (\l) at (\i,0) {$v_\l$};
  }
  \node[vertex] (1) at (3,1.5) {$v_1$};
  \node at (3,2.2) {$2$};
  \node at (0,-0.7) {$3$};
  \node at (3,-0.7) {$3$};
  \node at (6,-0.7) {$2$};
  \node at (9,-0.7) {$2$};
  \draw[-] (1) -- (3) (2) -- (3) -- (4) -- (5);
\end{tikzpicture}
}
}
 \hspace*{0.3cm}
\subfloat[Forced lists of colors]
{
\label{subfig:forced_non_colorable_clawpath2}                                                  
\scalebox{0.75}{
\begin{tikzpicture}[join=bevel,vertex/.style={circle,draw, minimum size=0.6cm},inner sep=0mm,scale=0.9]
  \foreach \i / \l in {0/2,3/3,6/4,9/5}{
  	\node[vertex] (\l) at (\i,0) {$v_\l$};
  }
  \node[vertex] (1) at (3,1.5) {$v_1$};
  \node at (3,2.2) {$L(v_1)\subseteq\{a,b,c\}$};
  \node at (0,-0.7) {$\{a,b,c\}$};
  \node at (3,-0.7) {$\{a,b,c\}$};
  \node at (6,-0.7) {$L(v_4)\subseteq \{a,b,c\}$};
  \draw[-] (1) -- (3) (2) -- (3) -- (4) -- (5);
\end{tikzpicture}
}
}
\caption{A non-colorable graph}
\label{fig:forced_non_colorable_clawpath}
\end{figure}

\bigskip

\begin{figure}[!ht]
\centering

\subfloat[Initial configuration]
{
\label{subfig:forced_non_colorable_P5_1}
\scalebox{0.7}{
\begin{tikzpicture}[join=bevel,vertex/.style={circle,draw, minimum size=0.6cm},inner sep=0mm,scale=0.85]
  \foreach \i / \l in {0/1,2/2,4/3,6/4,8/5}{
  	\node[vertex] (\l) at (\i,0) {$v_\l$};
  }
  \node at (0,-0.7) {$2$};
  \node at (2,-0.7) {$2$};
  \node at (4,-0.7) {$4$};
  \node at (6,-0.7) {$2$};
  \node at (8, -0.7) {$2$};
  \draw[-] (1) -- (2) -- (3) -- (4) -- (5);
\end{tikzpicture}
}
}
 \hspace*{1cm}
\subfloat[Forced lists of colors]
{
\label{subfig:forced_non_colorable_P5_2}
\scalebox{0.7}{
\begin{tikzpicture}[join=bevel,vertex/.style={circle,draw, minimum size=0.6cm},inner sep=0mm,scale=0.85]
  \foreach \i / \l in {0/1,2/2,4/3,6/4,8/5}{
  	\node[vertex] (\l) at (\i,0) {$v_\l$};
  }
  \node at (0,-0.7) {$\{a,b\}$};

  \node at (2,-0.7) {$\{a,b\}$};
  \node at (4,-0.7) {$\{a,b,c,d\}$};
  \node at (6,-0.7) {$\{c,d\}$};
  \node at (8, -0.7) {$\{c,d\}$};
  \draw[-] (1) -- (2) -- (3) -- (4) -- (5);
\end{tikzpicture}
}
}
\caption{A non-colorable graph}
\label{fig:forced_non_colorable_P5}
\end{figure}
\bigskip

\figureproof{\Cref{fig:forced_non_colorable_3path}}
By Hall's Theorem, if $|L(v_1)\cup L(v_2)\cup L(v_3)|\geq 3$, then the graph is 2-distance colorable. Hence the forced lists in \Cref{subfig:forced_non_colorable_3path2} follow.
\hfill\smallqed\medskip

\figureproof{\Cref{fig:forced_non_colorable_claw}}
By Hall's Theorem, if $|L(v_1)\cup L(v_2)\cup L(v_3)\cup L(v_4)|\geq 4$, then the graph is 2-distance colorable. Hence the forced lists in \Cref{subfig:forced_non_colorable_claw2} follow.
\hfill\smallqed\medskip

\figureproof{\Cref{fig:forced_non_colorable_clawpath}}
First, observe that if $|L(v_1)|\geq 4$ or $|L(v_2)|\geq 4$, we can color the other vertices by \Cref{subfig:config1} and finish with $v_1$ or $v_2$ respectively. If $L(v_4)\geq 4$, then we obtain \Cref{subfig:config4}. Similarly, if $|L(v_3)|\geq 4$, then we obtain \Cref{subfig:config5}.

Also note that if $|L(v_5)|\geq 3$, then either $v_1$, $v_2$, $v_3$, $v_4$ can be colored and we color $v_5$ last. Or they cannot be colored and by \Cref{subfig:forced_non_colorable_claw2}, we have \Cref{subfig:forced_non_colorable_clawpath2}. 

We will show that if $v_1$, $v_2$, $v_3$, $v_4$ are colorable, then the whole configuration is colorable ($v_5$ included). Thus, they cannot be colored and by \Cref{fig:forced_non_colorable_claw} (since all four vertices see each other at distance two), we obtain \Cref{subfig:forced_non_colorable_clawpath2}.

So, let us assume that $v_1$, $v_2$, $v_3$, $v_4$ are colorable, in which case, $|L(v_1)\cup L(v_2)\cup L(v_3)\cup L(v_4)|\geq 4$ and $|L(v_5)|=2$.

If $L(v_5)\subseteq L(v_4)$, then we restrict $L(v_3)$ to $L(v_3)\setminus L(v_5)$ and  observe that $|L(v_1)\cup L(v_2)\cup(L(v_3)\setminus L(v_5))\cup L(v_4)| = |L(v_1)\cup L(v_2)\cup L(v_3)\cup L(v_4)|\geq 4$ since $L(v_5)\subseteq L(v_4)$. So, we can color $v_1$, $v_2$, $v_3$, $v_4$ and finish by coloring $v_5$.

If $L(v_5)\not\subseteq L(v_4)$, then we restrict $L(v_4)$ to $L(v_4)\setminus L(v_5)$. If $|L(v_1)\cup L(v_2)\cup L(v_3)\cup (L(v_4)\setminus L(v_5))|\geq 4$, then we can color $v_1$, $v_2$, $v_3$, $v_4$ and finish with $v_5$. Thus, $|L(v_1)\cup L(v_2)\cup L(v_3)\cup (L(v_4)\setminus L(v_5))| = 3$ and we can assume w.l.o.g. that $L(v_1)\subseteq L(v_2)=L(v_3)=\{a,b,c\}$ and $d\in L(v_4)\cap L(v_5)$. Now, it suffices to color $v_4$ with $d$, then color $v_5$, $v_1$, $v_3$, $v_2$ in this order.
\hfill\smallqed\medskip

\figureproof{\Cref{fig:forced_non_colorable_P5}}
First, observe that if $|L(v_1)|\geq 3$, then we can color the other vertices by \Cref{subfig:config1} and color $v_1$ last. If $|L(v_2)|\geq 3$, then we obtain \Cref{subfig:config2}. Symmetrically, the same holds for $L(v_4)$ and $L(v_5)$. If $|L(v_3)|\geq 5$, we can color $v_1$, $v_2$, $v_4$, $v_5$, $v_3$ in this order. 

Now, let us try to color the configuration. If $L(v_1)\neq L(v_2)$, then color $v_1$ with $a\notin L(v_2)$ and get \Cref{subfig:config1}. Therefore we have $L(v_1)=L(v_2)$ and symmetrically $L(v_4)=L(v_5)$. Finally, if $L(v_1)\cup L(v_5)\neq L(v_3)$, then one could color $v_3$ with $b\notin L(v_1)\cup L(v_5)$ and finish by coloring $v_1$, $v_2$, $v_4$, $v_5$ in this order. Hence the lists in \Cref{subfig:forced_non_colorable_P5_2} follow.
\hfill\smallqed\medskip

\begin{lemma}
\label{lemma:colorable_22422}
If there exists a coloring $\phi$ of the configuration from \Cref{subfig:forced_non_colorable_P5_1} where $\phi(v_1)\neq \phi (v_5)$, then there exists a coloring $\phi'$ such that $\phi(v_1)\neq \phi'(v_1)$ or $\phi(v_5)\neq \phi'(v_5)$.
\end{lemma}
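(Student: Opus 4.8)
The plan is to start from the given coloring $\phi$, write $p_i:=\phi(v_i)$, and try to modify it at one endpoint while keeping the rest essentially fixed. Recall the $2$-distance constraints on the path $v_1v_2v_3v_4v_5$: the only pairs allowed to repeat a color are $(v_1,v_4),(v_1,v_5),(v_2,v_5)$, so $v_3$ sees all of $v_1,v_2,v_4,v_5$, both $p_1,p_2,p_3$ and $p_3,p_4,p_5$ are triples of distinct colors, and $p_2\neq p_4$. The cheapest move I would try first is to recolor $v_1$ alone: this succeeds exactly when $L(v_1)\setminus\{p_1,p_2,p_3\}\neq\emptyset$ (repaint $v_1$ with such a color, keep $\phi$ elsewhere), and symmetrically for $v_5$. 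Hence it remains to handle the case $L(v_1)\subseteq\{p_1,p_2,p_3\}$ and $L(v_5)\subseteq\{p_3,p_4,p_5\}$.

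Here I would exploit the large list $|L(v_3)|\geq 4$. If $p_3\in L(v_1)$, the idea is to recolor $v_3$ with some $q_3\in L(v_3)\setminus\{p_2,p_3,p_4,p_5\}$ (which liberates $p_3$ for $v_1$) and then set $v_1:=p_3$; since $v_3$ sees only $v_1,v_2,v_4,v_5$, such a $q_3$ fails to exist only when $L(v_3)=\{p_2,p_3,p_4,p_5\}$ is forced. The symmetric move treats $p_3\in L(v_5)$. What survives these moves is a saturated regime: either $L(v_3)=\{p_2,p_3,p_4,p_5\}$, or $L(v_1)=\{p_1,p_2\}$ and $L(v_5)=\{p_4,p_5\}$ with $L(v_2),L(v_4)\subseteq\{p_2,p_3,p_4\}$. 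In the latter the only alternative color at $v_1$ (resp. $v_5$) is the color $p_2$ of its neighbour $v_2$ (resp. $p_4$ of $v_4$), and even freeing it by a single recolor of $v_2$ (a color of $L(v_2)\setminus\{p_2,p_3,p_4\}$) or of $v_4$ is blocked precisely when those last inclusions hold.

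The main obstacle is exactly this saturated regime, which is the neighbourhood of the unique non-colorable configuration of \Cref{fig:forced_non_colorable_P5}: no single- or double-vertex repaint is available, so one must push a length-two or length-three cascade through $v_3$. Concretely, to change $v_1$ I would set $v_1:=p_2$, $v_2:=p_3$ (using $p_3\in L(v_2)$) while moving $v_3$ off $p_3$ to a color of $L(v_3)\setminus\{p_2,p_3,p_4,p_5\}$; the mirror cascade changes $v_5$ (e.g. $v_5:=p_4,\ v_4:=p_3,\ v_3:=p_5$ when $L(v_3)=\{p_2,p_3,p_4,p_5\}$); when instead $L(v_2)=L(v_4)=\{p_2,p_4\}$ one uses the simultaneous shift $v_1\mapsto p_2,\ v_2\mapsto p_4,\ v_4\mapsto p_2$ (valid as long as $p_2\neq p_5$), and in the fully degenerate sub-case $p_1=p_4,\ p_2=p_5$ all of $v_1,v_2,v_4,v_5$ carry the list $\{p_1,p_2\}$, so one simply flips to the other alternating $2$-coloring $p_2,p_1,p_2,p_1$ of the path $v_1v_2v_4v_5$. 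The hypothesis $p_1\neq p_5$ together with the slack $|L(v_3)|\geq 4$ is what guarantees that the free color needed at $v_3$ is present and that the cascade never closes up on an already-used color (which would reproduce the forbidden pattern of \Cref{fig:forced_non_colorable_P5}); since $\phi$ exists with $p_1\neq p_5$, that pattern is excluded, so one of these cascades always goes through and yields the desired $\phi'$.
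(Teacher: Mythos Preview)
Your direct-construction strategy (recolor one endpoint, then escalate to cascades through $v_2,v_3,v_4$) is a reasonable alternative to the paper's proof by contradiction, but as written the case analysis has real gaps.

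The split ``either $L(v_3)=\{p_2,p_3,p_4,p_5\}$, or $L(v_1)=\{p_1,p_2\}$ and $L(v_5)=\{p_4,p_5\}$ with $L(v_2),L(v_4)\subseteq\{p_2,p_3,p_4\}$'' is not an exhaustive dichotomy: the first alternative arises only from the failure of the move at the $v_1$-side when $p_3\in L(v_1)$, and the second only from $p_3\notin L(v_1)$ and $p_3\notin L(v_5)$; the mixed cases (e.g.\ $p_3\in L(v_1)$, $p_3\notin L(v_5)$) are not covered by either description. Concretely, take $p_3\in L(v_1)$, $L(v_3)=\{p_2,p_3,p_4,p_5\}$, and $L(v_4)=\{p_2,p_4\}$. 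Then your first cascade ($v_1:=p_2,\ v_2:=p_3,\ v_3:=\text{new}$) has no color left for $v_3$, your mirror cascade ($v_5:=p_4,\ v_4:=p_3,\ v_3:=p_5$) is blocked because $p_3\notin L(v_4)$, and you are not in the regime $L(v_2)=L(v_4)=\{p_2,p_4\}$ either unless you have separately pinned down $L(v_2)$. The closing sentence (``one of these cascades always goes through'') is precisely the thing that needs proof, and the listed moves do not establish it. Similarly, the ``fully degenerate sub-case $p_1=p_4,\ p_2=p_5$'' is asserted rather than derived: when the shift $v_1\mapsto p_2,\ v_2\mapsto p_4,\ v_4\mapsto p_2$ fails because $p_2=p_5$, nothing you have written forces $p_1=p_4$.

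The paper avoids this tangle by arguing the contrapositive: assume every coloring has $\phi'(v_1)=a$ and $\phi'(v_5)=b$, fix $v_1$ to its other list color $x$, and use the earlier colorability lemma for the $4$-path (Figure~\ref{subfig:config1}) to force $x\in L(v_2)$, then $L(v_2)=\{x,y\}$ with $x,y\in L(v_3)$, and finally (via Figure~\ref{fig:forced_non_colorable_3path}) $L(v_3)=\{x,y,b,x'\}$. The symmetric argument from the $v_5$-side gives $L(v_3)=\{x',y',a,x\}$, whence $y=a$, $y'=b$, and the lists are exactly the non-colorable pattern of Figure~\ref{fig:forced_non_colorable_P5}, contradicting the existence of $\phi$. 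This backward approach pins down all lists in a linear sequence of deductions and never needs the nested sub-case analysis that your forward construction requires; if you want to salvage the direct approach, you need to enumerate the failure conditions of \emph{all} your moves simultaneously and check that their conjunction is exactly the forbidden pattern.
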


\begin{proof}
Suppose that the configuration from \Cref{subfig:forced_non_colorable_P5_1} is colorable with $\phi$ where $\phi(v_1) = a$, $\phi(v_5) = b$ and $a\neq b$. Suppose by contradiction that for every coloring $\phi'$ of \Cref{subfig:forced_non_colorable_P5_1}, $\phi'(v_1)=a$ and $\phi'(v_5)=b$.

Let $L(v_1)=\{a,x\}$. We color $v_1$ with $x$. Since there exists no valid coloring $\phi'$ where $\phi'(v_1)=x$, the remaining configuration must not be colorable. So $x\in L(v_2)$, otherwise, we can color $v_2$, $v_3$, $v_4$, $v_5$ by \Cref{subfig:config1}. Let $L(v_2)=\{x,y\}$. Moreover, $x,y\in L(v_3)$. Otherwise, we color $v_1$ with $x$, $v_2$ with $y$ and finish by coloring $v_4$, $v_5$, $v_3$ in this order.

Symmetrically, the same holds for $v_5$. Let $L(v_5)=\{b,x'\}$, then we must have $L(v_4)=\{x',y'\}$ and $x',y'\in L(v_3)$.

Observe that when we color $v_1$ with $x$ and $v_2$ with $y$, the remaining configuration is not colorable so by \Cref{fig:forced_non_colorable_3path}, we must have $L(v_3)=\{x,y,b,x'\}$. Symmetrically, if instead we color $v_5$ with $x'$ and $v_4$ with $y'$, then we must have $L(v_3)=\{x',y',a,x\}$. We conclude that $\{x,x',b,y\} = \{x,x',a,y'\}$. In other words, $a=y$ and $b=y'$. Thus, we have $L(v_1)=L(v_2)=\{a,x\}$, $L(v_4)=L(v_5)=\{b,x'\}$ and $L(v_3)=\{a,x,b,x'\}$. By \Cref{fig:forced_non_colorable_3path}, we know that this configuration is not colorable, which is a contradiction as there exists a valid coloring $\phi$.
\end{proof}

\section{Structural properties of a minimal counterexample}
\label{sec:structural_properties}
Let $G$ be a counterexample to \Cref{thm:girth8} with the minimum number of vertices. We show some properties of $G$.

\begin{lemma}
\label{lemma:connected}
Graph $G$ is connected.
\end{lemma}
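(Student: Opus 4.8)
The plan is a routine minimality argument: if $G$ were disconnected, one could assemble a $2$-distance $6$-coloring of $G$ from colorings of its components, contradicting the choice of $G$ as a minimum counterexample to \Cref{thm:girth8}.

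First I would suppose, for contradiction, that $G$ is disconnected, and let $G_1,\dots,G_t$ (with $t\geq 2$) be its connected components. Each $G_i$ is a proper subgraph of $G$, hence has strictly fewer vertices. I would then check that every $G_i$ inherits all the hypotheses of \Cref{thm:girth8}: it is planar and subcubic as a subgraph of $G$, and its girth is at least $8$, since every cycle of $G_i$ is a cycle of $G$ and therefore has length at least $8$ (and if $G_i$ is acyclic, its girth is infinite, hence at least $8$). Consequently no $G_i$ can itself be a counterexample, so by the minimality of $G$ each $G_i$ admits a $2$-distance $6$-coloring $\phi_i\colon V(G_i)\to\{a,b,c,d,e,f\}$.

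Next I would combine these into a single coloring $\phi$ of $G$ by setting $\phi(v)=\phi_i(v)$ for $v\in V(G_i)$. The key observation making this work is that any two vertices lying in distinct components are at infinite distance in $G$, so in particular they are never at distance at most $2$; therefore any monochromatic pair at distance at most $2$ under $\phi$ would have to lie within a single $G_i$, which is impossible since each $\phi_i$ is a proper $2$-distance coloring. Hence $\phi$ is a $2$-distance $6$-coloring of $G$, contradicting $\chi^2(G)>6$, and so $G$ must be connected.

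I do not expect a genuine obstacle here. The only points requiring (trivial) care are the verification that the girth hypothesis is preserved on passing to components, including the vacuous acyclic case, and the elementary fact that one may reuse the same palette $\{a,b,c,d,e,f\}$ across all components precisely because inter-component distances are infinite; both are immediate.
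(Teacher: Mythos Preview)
Your proof is correct and takes essentially the same approach as the paper: the paper phrases it as ``if $G$ were disconnected, some component would be a smaller counterexample,'' while you phrase the contrapositive ``each component is colorable by minimality, hence so is $G$,'' but these are the same minimality argument.
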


\begin{proof}
If $G$ is not connected, then we consider one of its connected component that is not 2-distance colorable (which exists since $G$ is a counterexample to \Cref{thm:girth8}). This component is also a planar subcubic graph with girth at least 8 that is a counterexample to \Cref{thm:girth8}, which contradicts $G$'s minimality.
\end{proof}

\begin{lemma}
\label{lemma:1vertex}
Graph $G$ has minimum degree at least 2. 
\end{lemma}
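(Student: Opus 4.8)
The plan is to argue by reducibility: a vertex of degree at most $1$ can be deleted and its colour restored afterwards, contradicting the minimality of $G$. Concretely, suppose toward a contradiction that $G$ contains a vertex $v$ with $d(v)\leq 1$, and set $G'=G-v$. Deleting a vertex cannot create new cycles, so $G'$ still has girth at least $8$ (and the condition is vacuous if $G'$ becomes acyclic); moreover $G'$ is planar, subcubic, and has strictly fewer vertices than $G$. Hence, by the minimality of $G$ as a counterexample to \Cref{thm:girth8}, the graph $G'$ admits a 2-distance 6-colouring $\phi$.

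It then remains to extend $\phi$ to $v$. If $d(v)=0$, then $v$ sees no other vertex, so any colour works; note that by \Cref{lemma:connected} this can only happen when $G$ is the single vertex $v$, which is trivially 2-distance colourable. Otherwise $d(v)=1$; let $u$ be the unique neighbour of $v$. The only vertices $v$ sees are $u$ and the neighbours of $u$ distinct from $v$. Since $G$ is subcubic, $u$ has at most two such neighbours, so $v$ sees at most $1+2=3$ vertices in total. Consequently at most three colours are forbidden for $v$, and $|L(v)|\geq 6-3=3>0$.

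Choosing any colour of $L(v)$ for $v$ extends $\phi$ to a 2-distance 6-colouring of all of $G$, contradicting the assumption that $G$ is a counterexample; hence $G$ has minimum degree at least $2$. I expect no genuine obstacle here: this is a routine reducibility argument, and the only two points needing care are checking that the girth hypothesis survives vertex deletion and that the number of vertices $v$ sees never forces $|L(v)|$ down to $0$, both of which follow immediately from the subcubic hypothesis and are comfortably slack (three colours remain available).
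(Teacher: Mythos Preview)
Your proof is correct and follows essentially the same approach as the paper: delete the vertex of degree at most $1$, colour the smaller graph by minimality, and extend to $v$ by noting it sees at most three coloured vertices. You are slightly more explicit about why the girth hypothesis is preserved and about the degree-$0$ case, but the argument is the same.
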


\begin{proof}
If $G$ has a $0$-vertex, since $G$ is connected, it is a single vertex which is colorable. Assume by contradiction that $G$ has a 1-vertex $v$. We remove such vertex and 2-distance color the resulting graph which is possible due to the minimality of $G$. Then, we add the vertex back then choose a color for $v$ different from all of its 2-distance neighbors' as $v$ has at most 3 neighbors at distance 2 and we have 6 colors.
\end{proof}

By \Cref{lemma:connected} and \Cref{lemma:1vertex}, the graph $G$ has only 2-vertices and 3-vertices.

\begin{lemma}
\label{lemma:no 2-path}
Graph $G$ has no $k$-path with $k\geq 2$.
\end{lemma}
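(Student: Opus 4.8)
The plan is to argue by contradiction and exhibit a single reducible $2$-vertex. Suppose $G$ contains a $k$-path with $k\ge 2$. By definition its internal vertices are $2$-vertices, and since $k\ge 2$ at least two of them are adjacent; call them $v$ and $w$. Let $a$ be the neighbor of $v$ other than $w$, and $b$ the neighbor of $w$ other than $v$; these are well-defined since $G$ has minimum degree at least $2$ by \Cref{lemma:1vertex}.

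The key point is that $v$ sees only few vertices. The vertices at distance at most $2$ from $v$ are: its two neighbors $a$ and $w$; the neighbors of $a$ other than $v$, of which there are at most $\deg(a)-1\le 2$ since $G$ is subcubic; and the neighbors of $w$ other than $v$, which is just $b$ because $w$ is a $2$-vertex. Hence $v$ has at most $2+2+1=5$ vertices at distance at most $2$. Any coincidences among these vertices (several of which are in fact excluded by the girth condition $g\ge 8$) would only lower this count, so in every partial coloring at most $5$ distinct colors are seen by $v$.

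Next I would delete $v$. The graph $G-v$ is again planar, subcubic, and of girth at least $8$ (removing a vertex cannot create new cycles, so the girth does not decrease), and it has strictly fewer vertices than $G$; by the minimality of $G$ it therefore admits a $2$-distance $6$-coloring $\phi$. To extend $\phi$ to $G$ it remains to color $v$: the colors forbidden for $v$ are exactly those used by its vertices at distance at most $2$, of which there are at most $5$, so at least one of the six colors is available. Assigning $v$ such a color yields a $2$-distance $6$-coloring of $G$, contradicting that $G$ is a counterexample to \Cref{thm:girth8}.

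The argument is purely a counting one, so I expect no genuine obstacle; the only point requiring care is the distance-$2$ neighborhood bound, where it is crucial that $v$ has a neighbor ($w$) of degree $2$. This is precisely what a $k$-path with $k\ge 2$ supplies, and it is exactly what brings the count down from the generic value of $6$ for an arbitrary $2$-vertex (which would not suffice) to $5$.
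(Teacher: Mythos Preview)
Your argument has a subtle but genuine gap. When you delete $v$ and take a 2-distance 6-coloring $\phi$ of $G-v$, this coloring is proper with respect to distances in $G-v$, not in $G$. The two neighbors $a$ and $w$ of $v$ are at distance $2$ in $G$ (via $v$), but in $G-v$ the girth condition gives $d_{G-v}(a,w)\ge 7$, so nothing prevents $\phi(a)=\phi(w)$. If that happens, $\phi$ is not a valid partial 2-distance coloring of $G$: regardless of how you color $v$, the pair $(a,w)$ already violates the distance-$2$ constraint. So the sentence ``to extend $\phi$ to $G$ it remains to color $v$'' is not justified as written.

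The paper sidesteps this by removing \emph{all} internal $2$-vertices of the $k$-path at once. Then the only pair of surviving vertices whose shortest path in $G$ used the deleted set is the pair of endpoints of the path, and these are at distance $k+1\ge 3$ in $G$, hence unconstrained; the coloring of the smaller graph is automatically a valid partial coloring of $G$, and the deleted $2$-vertices can be colored greedily. Your single-vertex deletion can also be repaired with one extra step: if $\phi(a)=\phi(w)$, first recolor $w$ (in $G$ it sees, besides the uncolored $v$, only $a$, $b$, and at most two further neighbors of $b$, so at most four forbidden colors), and then color $v$. Either route works; the point is that deleting a degree-$2$ vertex with two surviving neighbors requires checking that those neighbors remain properly colored relative to distances in $G$.
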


\begin{proof}
Assume by contradiction that $G$ has a $k$-path with $k\geq 2$. We remove the $2$-vertices of this path and color the resulting graph. One can easily see that such coloring is greedily extendable to the removed $2$-vertices.
\end{proof}

\medskip

In what follows we show a set of subgraphs of $G$ that are \emph{reducible}, that is none of these subgraphs can appear in $G$ as otherwise it would contradict the choice of $G$. All these configurations are depicted in \Cref{fig:reducible}, \Cref{fig:special_reducible} and \Cref{fig:reducible_cycles}. In order to simplify the reading of the paper, the captions of the corresponding configurations of these figures will be explained later in \Cref{subsec:second_round_discharging} as they are not used in this section. 
In each of the sub-figures, we define $S$ as the set of all vertices labeled $v_i$, $v'_i$, $v''_i$ or $v'''_i$, where $i$ is a positive integer. The degree of these vertices are given by their incident edges. In order to prove the reducibility of $S$ we consider a 2-distance coloring $\phi$ of $G-S$ (by induction hypothesis) and show how to extend $\phi$ to $G$ leading to a contradiction. In each figure, the number drawn next to a vertex of $S$ in the figure corresponds to the number of available colors in the precoloring extension of $G-S$.

Since $G$ has girth $g\geq 8$, one can easily observe that $G[S]^2=G^2[S]$ for each configuration in \Cref{fig:reducible}. In other words, there are no extra conflicts between vertices in $S$ than the conflicts in $G[S]$. Unlike the configurations of \Cref{fig:reducible}, in those of \Cref{fig:special_reducible}, some pair of vertices may see each other in $G$ while they are at distance at least 3 in the subgraph induced by $S$, that is sometimes $G[S]^2\neq G^2[S]$. 

\begin{lemma}
\label{lemma:reducible}
Graph $G$ does not contain the configurations depicted in \Cref{fig:reducible}. 

\end{lemma}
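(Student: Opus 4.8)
The plan is to use the reducibility framework set up in the paragraph preceding the statement. For each configuration depicted in \Cref{fig:reducible}, I would assume for contradiction that $G$ contains it, let $S$ be the indicated set of special vertices (those labeled $v_i$, $v'_i$, $v''_i$, $v'''_i$), and invoke the minimality of $G$ to obtain a $2$-distance $6$-coloring $\phi$ of $G-S$. The goal is then to extend $\phi$ to all of $S$, contradicting that $G$ is a counterexample.

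The first step is to justify the lower bounds $|L(v)|$ written next to each vertex. For $v\in S$, the colors forbidden for $v$ are exactly those used by the already-colored vertices (those outside $S$) lying at distance at most $2$ from $v$ in $G$. Since $G$ is subcubic and, by \Cref{lemma:no 2-path}, contains no two consecutive $2$-vertices, the number of such colored neighbors is bounded by the degrees appearing in the figure; subtracting this count from the $6$ available colors yields exactly the indicated bound. Here the girth hypothesis $g\geq 8$ is essential: as noted just before the statement it guarantees $G[S]^2=G^2[S]$, so no two vertices of $S$ that are nonadjacent in $G[S]$ secretly conflict in $G$, and the counting is not spoiled by short cycles creating extra distance-$2$ pairs.

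With the lists in hand, extending $\phi$ to $S$ is precisely a list-coloring problem on $G[S]^2$ with these lists, and the equality $G[S]^2=G^2[S]$ means this instance depends only on the abstract configuration drawn, not on how $S$ sits inside $G$. I would therefore match each configuration in \Cref{fig:reducible} to one of the colorable configurations of \Cref{lemma:observations} (some \Cref{subfig:config1} through \Cref{subfig:config16}): the vertex sets, adjacencies, and list sizes coincide, so the colorability already established there applies verbatim, $\phi$ extends to $G$, and we reach the desired contradiction.

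The hard part will not be any single clever coloring step — those are all discharged by \Cref{lemma:observations} — but the configuration-by-configuration bookkeeping: verifying that each induced list-coloring instance is isomorphic to one of the already-solved instances, and in particular that the claimed list sizes are correct. The delicate point is confirming that each special vertex sees exactly the colored neighbors counted, which is where the girth bound $g\geq 8$ and the absence of $k$-paths with $k\geq 2$ do the real work. Once this is checked for every configuration in \Cref{fig:reducible}, the lemma follows.
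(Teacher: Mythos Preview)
Your overall strategy is correct and matches the paper's for most of the eight configurations: several of them reduce directly (or after one or two preliminary greedy colorings) to configurations from \Cref{lemma:observations}. However, your claim that \emph{every} configuration in \Cref{fig:reducible} ``coincides'' with one of the colorable configurations of \Cref{lemma:observations} is false, and this creates a genuine gap.

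The problematic case is \Cref{subfig:1a1a0c1}. With the list sizes shown ($2,2,4,3,3,3$ on the path $v_1\cdots v_6$ and $3$ on $v'_5$), the list-coloring instance on $G[S]^2$ is \emph{not} always solvable. For instance, take $L(v_1)=L(v_2)=\{a,b\}$, $L(v_3)=\{a,b,c,d\}$, and $L(v_4)=L(v_5)=L(v_6)=L(v'_5)=\{c,d,e\}$. Then $v_1,v_2$ use $a,b$; $v_3$ uses $c$ or $d$; and the four vertices $v_4,v_5,v_6,v'_5$ form a clique in the square graph needing four distinct colors, yet their lists are contained in $\{c,d,e\}$. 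So no colorable configuration of \Cref{lemma:observations} can possibly match this one.

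The paper handles this case by a recoloring argument that your proposal does not anticipate: it redefines $S$ to be only $\{v_1,v_2,v_3\}$, applies \Cref{fig:forced_non_colorable_3path} to conclude that the lists of $v_1,v_2,v_3$ must be forced (specifically $L(v_1)=L(v_3)$), then \emph{uncolors} $v_4,v_5,v_6,v'_5$ and recolors them so that the colors seen by $v_3$ change, breaking the equality $L(v_1)=L(v_3)$. This exploits the fact that not every abstract list assignment can actually arise from a precoloring of $G-S$; some precolorings can be modified. Your plan treats the list-coloring instance as an arbitrary one and therefore cannot succeed here.
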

\begin{proof}
We will give the proofs for each figure in order:

\figureproof{\Cref{subfig:1c1a1}}
Color arbitrarily vertex $v'_2$ and then get \Cref{subfig:config2}.
\hfill\smallqed\medskip

\figureproof{\Cref{subfig:1c0c0a1}}
Direct implication of \Cref{subfig:config8}.
\hfill\smallqed\medskip

\figureproof{\Cref{subfig:1c0c1}}
Direct implication of \Cref{subfig:config7}.
\hfill\smallqed\medskip

\figureproof{\Cref{subfig:1a1a0c1}}
To prove this configuration, we redefine the set $S$ to be $\{v_1,v_2,v_3\}$. Consider a 2-distance coloring $\phi$ of $G-S$. If $\phi$ is extendable to $G$, then we are done. Thus the available colors of vertices in $S$ correspond to \Cref{fig:forced_non_colorable_3path}. More precisely, $L(v_2)\subseteq L(v_1)=L(v_3)=\{a,b\}$. Now, uncolor vertices $v_4$, $v_5$, $v_6$ and $v'_5$ and observe that the numbers of available colors of the non-colored vertices of $G$ are the ones depicted in \Cref{subfig:1a1a0c1}. 

Without loss of generality we may assume that $\phi(v_4)=c$ and $\phi(v_5)=d$. Consequently, after the uncoloring of vertices $v_4$, $v_5$, $v_6$ and $v'_5$, we have $L(v_3)=\{a,b,c,d\}$ and $L(v_1)=\{a,b\}$. If we can choose a color $x\notin \{c,d\}$ for $v_4$ and color vertices $v_5$, $v_6$ and $v'_5$, then due to \Cref{fig:forced_non_colorable_3path}, we can finish the coloring of $v_1$, $v_2$ and $v_3$. Thus, $|L(v_5)|=3$ and the available colors for $v_5$, $v_6$ and $v'_5$ are $\{x,y,z\}\in\{a,b,c,d,e,f\}$ (again due to \Cref{fig:forced_non_colorable_3path}). Note that $\phi(v_4)=c\notin\{x,y,z\}$, otherwise $\phi$ would not be a valid coloring of $G-S$. We can assume w.l.o.g that $x\neq d$ and we color $v_4$, $v_5$, $v_6$, $v'_5$ with $c$, $x$, $y$, $z$ respectively. Finally, due to \Cref{fig:forced_non_colorable_3path} we can finish by coloring $v_1$, $v_2$, $v_3$ since the lists of available colors for $v_1$ and $v_3$ are not the same anymore.
\hfill\smallqed\medskip

\figureproof{\Cref{subfig:1a1c0a1}}
Direct implication of \Cref{subfig:config11}.
\hfill\smallqed\medskip

\figureproof{\Cref{subfig:1b0b0a1}}
Color $v'_3$ with a color $a\notin L(v''_3)$, and color $v_4$, $v_5$ in order. Then color vertices $v_1$, $v_2$, $v_3$, $v'_2$, $v''_2$, $v'''_2$ by \Cref{subfig:config7} and finish by coloring $v'''_3$ and $v''_3$ in this order.
\hfill\smallqed\medskip

\figureproof{\Cref{subfig:1c0b0c1} and \Cref{subfig:1c0b0c0c}}
Direct implication of \Cref{subfig:config16} for \Cref{subfig:1c0b0c0c}. As for \Cref{subfig:1c0b0c1}, it suffices to see that by adding an imaginary vertex $v_6$ adjacent to $v_5$ with any list of colors that verifies $|L(v_6)|\geq 2$, \Cref{subfig:config16} gives us a valid coloring for vertices of \Cref{subfig:1c0b0c1}.
\hfill\smallqed\medskip

\end{proof}

\begin{figure}[H]
\centering
\subfloat[\texttt{1c1a1}, \texttt{1c1c}]
{
\label{subfig:1c1a1}
\scalebox{0.75}{%
\begin{tikzpicture}[join=bevel,vertex/.style={circle,draw, minimum size=0.6cm},inner sep=0mm,scale=0.8]
 \foreach[count=\l from 1] \i in {0,2,...,8}{
  	\node[vertex] (\l) at (\i,0) {$v_\l$};
  }
  \node (0) at (-1,0) {};
  \node at (0,-0.7) {$3$};
  \node at (2,-0.7) {$4$};
  \node at (4,-0.7) {$5$};
  \node at (6,-0.7) {$2$};
  \node at (8,-0.7) {$2$};
  \node (6) at (9,0) {};
  \node[vertex] (v'2) at (2,2) {$v'_2$};
  \node at (1.2,2) {$3$};
  \node (v''2) at (2,3) {};
  \node (v'4) at (6,1) {};

  \draw[-] (0) -- (1) -- (2) -- (3) -- (4) -- (5) -- (6);
  \draw[-] (2) -- (v'2) -- (v''2)  (4) -- (v'4);
\end{tikzpicture}
}
}
\hspace*{0.3cm}
\subfloat[\texttt{1c0c0a1}, \texttt{1c0c0c}, \texttt{1a0b1}, \texttt{1b0c}]
{
\label{subfig:1c0c0a1}
\scalebox{0.75}{%
\begin{tikzpicture}[join=bevel,vertex/.style={circle,draw, minimum size=0.6cm},inner sep=0mm,scale=0.8]
 \foreach[count=\l from 1] \i in {0,2,...,8}{
  	\node[vertex] (\l) at (\i,0) {$v_\l$};
  }
  \node (0) at (-1,0) {};
  \node at (0,-0.7) {$3$};
  \node at (2,-0.7) {$4$};
  \node at (4,-0.7) {$4$};
  \node[vertex] (v'3) at (4,2) {$v'_3$};
  \node at (3.2,2) {$3$};
  \node (v''3) at (4,3) {};
  \node at (6,-0.7) {$2$};
  \node at (8,-0.7) {$2$};
  \node (6) at (9,0) {};
  \node[vertex] (v'2) at (2,2) {$v'_2$};
  \node at (1.2,2) {$3$};
  \node (v''2) at (2,3) {};
  \node (v'4) at (6,1) {};

  \draw[-] (0) -- (1) -- (2) -- (3) -- (4) -- (5) -- (6);
  \draw[-] (2) -- (v'2) -- (v''2)  (3) -- (v'3) -- (v''3) (4) -- (v'4);
\end{tikzpicture}
}
}

\subfloat[\texttt{1c0c1}, \texttt{1b1}]
{
\label{subfig:1c0c1}
\scalebox{0.75}{%
\begin{tikzpicture}[join=bevel,vertex/.style={circle,draw, minimum size=0.6cm},inner sep=0mm,scale=0.8]
 \foreach[count=\l from 1] \i in {0,2,...,6}{
  	\node[vertex] (\l) at (\i,0) {$v_\l$};
  }
  \node (0) at (-1,0) {};
  \node at (0,-0.7) {$3$};
  \node at (2,-0.7) {$4$};
  \node at (4,-0.7) {$4$};
  \node[vertex] (v'3) at (4,2) {$v'_3$};
  \node at (3.2,2) {$3$};
  \node (v''3) at (4,3) {};
  \node at (6,-0.7) {$3$};
  \node (5) at (7, 0) {};
  \node[vertex] (v'2) at (2,2) {$v'_2$};
  \node at (1.2,2) {$3$};
  \node (v''2) at (2,3) {};

  \draw[-] (0) -- (1) -- (2) -- (3) -- (4) -- (5);
  \draw[-] (2) -- (v'2) -- (v''2)  (3) -- (v'3) -- (v''3);
\end{tikzpicture}
}
}
\hspace*{0.3cm}
\subfloat[\texttt{1a1a0c1}, \texttt{c1a0c1}, \texttt{1a1b}, \texttt{c1b}]
{
\label{subfig:1a1a0c1}
\scalebox{0.75}{%
\begin{tikzpicture}[join=bevel,vertex/.style={circle,draw, minimum size=0.6cm},inner sep=0mm,scale=0.8]
 \foreach[count=\l from 1] \i in {0,2,...,10}{
  	\node[vertex] (\l) at (\i,0) {$v_\l$};
  }
  \node (0) at (-1,0) {};
  \node at (0,-0.7) {$2$};
  \node at (2,-0.7) {$2$};
  \node at (4,-0.7) {$4$};
  \node at (6,-0.7) {$3$};
  \node at (8,-0.7) {$3$};
  \node at (10,-0.7) {$3$};
  \node (7) at (11,0) {};
  \node (v'2) at (2,1) {};
  \node (v'4) at (6,1) {};
  \node[vertex] (v'5) at (8,2) {$v'_5$};
  \node at (7.2,2) {$3$};
  \node (v''5) at (8,3) {};

  \draw[-] (0) -- (1) -- (2) -- (3) -- (4) -- (5) -- (6) -- (7);
  \draw[-] (2) -- (v'2) (4) -- (v'4) (5) -- (v'5) -- (v''5);
\end{tikzpicture}
}
}

\subfloat[\texttt{1a1c0a1}, \texttt{c1c0a1}, \texttt{1a1c0c}, \texttt{c1c0c}]
{
\label{subfig:1a1c0a1}
\scalebox{0.75}{%
\begin{tikzpicture}[join=bevel,vertex/.style={circle,draw, minimum size=0.6cm},inner sep=0mm,scale=0.8]
 \foreach[count=\l from 1] \i in {0,2,...,10}{
  	\node[vertex] (\l) at (\i,0) {$v_\l$};
  }
  \node (0) at (-1,0) {};
  \node at (0,-0.7) {$2$};
  \node at (2,-0.7) {$2$};
  \node at (4,-0.7) {$5$};
  \node at (6,-0.7) {$4$};
  \node at (8,-0.7) {$2$};
  \node at (10,-0.7) {$2$};
  \node (7) at (11,0) {};
  \node (v'2) at (2,1) {};
  \node[vertex] (v'4) at (6,2) {$v'_4$};
  \node at (5.2,2) {$3$};
  \node (v''4) at (6,3) {};
  \node (v'5) at (8,1) {};

  \draw[-] (0) -- (1) -- (2) -- (3) -- (4) -- (5) -- (6) -- (7);
  \draw[-] (2) -- (v'2) (4) -- (v'4) -- (v''4) (5) -- (v'5);
\end{tikzpicture}
}
}
\hspace*{0.3cm}
\subfloat[\texttt{1b0b0a1}, \texttt{1b0b0c}, \texttt{1c0c0b0a1}, \texttt{1c0c0b0c}]
{
\label{subfig:1b0b0a1}
\scalebox{0.75}{%
\begin{tikzpicture}[join=bevel,vertex/.style={circle,draw, minimum size=0.6cm},inner sep=0mm,scale=0.8]
 \foreach[count=\l from 1] \i in {0,2,...,8}{
  	\node[vertex] (\l) at (\i,0) {$v_\l$};
  }
  \node (0) at (-1,0) {};
  \node at (0,-0.7) {$3$};
  \node at (2,-0.7) {$5$};
  \node at (4,-0.7) {$5$};
  \node at (6,-0.7) {$2$};
  \node at (8,-0.7) {$2$};
  \node (6) at (9,0) {};
  
  \node[vertex] (v'2) at (2,2) {$v'_2$};
  \node at (1.2,2.4) {$4$};
  \node[vertex] (v''2) at (2,4) {$v''_2$};
  \node at (1.2,4) {$3$};
  \node (v''21) at (2,5) {};
  \node[vertex] (v'''2) at (0,2) {$v'''_2$};
  \node at (-0.8,2.3) {$3$};
  \node (v'''21) at (-1,2) {};

  \node[vertex] (v'3) at (4,2) {$v'_3$};
  \node at (3.1,2) {$4$};
  \node[vertex] (v''3) at (4,4) {$v''_3$};
  \node at (3.2,4) {$3$};
  \node (v''31) at (4,5) {};
  \node[vertex] (v'''3) at (6,2) {$v'''_3$};
  \node at (5.2,2.3) {$3$};
  \node (v'''31) at (7,2) {};
  
  \node (v'4) at (6,1) {};

  \draw[-] (0) -- (1) -- (2) -- (3) -- (4) -- (5) -- (6);
  \draw[-] (2) -- (v'2) -- (v''2) -- (v''21) (v'2) -- (v'''2) -- (v'''21);
  \draw[-] (3) -- (v'3) -- (v''3) -- (v''31) (v'3) -- (v'''3) -- (v'''31);
  \draw[-] (4) -- (v'4);
\end{tikzpicture}
}
}

\subfloat[\texttt{1c0b0c1}]
{
\label{subfig:1c0b0c1}
\scalebox{0.75}{%
\begin{tikzpicture}[join=bevel,vertex/.style={circle,draw, minimum size=0.6cm},inner sep=0mm,scale=0.8]
 \foreach[count=\l from 1] \i in {0,2,...,8}{
  	\node[vertex] (\l) at (\i,0) {$v_\l$};
  }
  \node (0) at (-1,0) {};
  \node at (0,-0.7) {$3$};
  \node at (2,-0.7) {$4$};
  \node at (4,-0.7) {$6$};
  \node at (6,-0.7) {$4$};
  \node at (8,-0.7) {$3$};
  \node (6) at (9,0) {};
  
  \node[vertex] (v'2) at (2,2) {$v'_2$};
  \node at (1.2,2.4) {$3$};
  \node (v''2) at (2,3) {};
  \node[vertex] (v'4) at (6,2) {$v'_4$};
  \node at (5.2,2.3) {$3$};
  \node (v''4) at (6,3) {};
  
  \node[vertex] (v'3) at (4,2) {$v'_3$};
  \node at (3.1,2) {$4$};
  \node[vertex] (v''3) at (3,4) {$v''_3$};
  \node at (2.2,4) {$3$};
  \node (v''31) at (2.5,5) {};
  \node[vertex] (v'''3) at (5,4) {$v'''_3$};
  \node at (4.2,4) {$3$};
  \node (v'''31) at (5.5,5) {};

  \draw[-] (0) -- (1) -- (2) -- (3) -- (4) -- (5) -- (6);
  \draw[-] (2) -- (v'2) -- (v''2);
  \draw[-] (3) -- (v'3) -- (v''3) -- (v''31) (v'3) -- (v'''3) -- (v'''31);
  \draw[-] (4) -- (v'4) -- (v''4);
\end{tikzpicture}
}
}
\hspace*{0.3cm}
\subfloat[\texttt{1c0b0c0a1}, \texttt{1c0b0c0c}]
{
\label{subfig:1c0b0c0c}
\scalebox{0.75}{%
\begin{tikzpicture}[join=bevel,vertex/.style={circle,draw, minimum size=0.6cm},inner sep=0mm,scale=0.8]
 \foreach[count=\l from 1] \i in {0,2,...,10}{
  	\node[vertex] (\l) at (\i,0) {$v_\l$};
  }
  \node (0) at (-1,0) {};
  \node at (0,-0.7) {$3$};
  \node at (2,-0.7) {$4$};
  \node at (4,-0.7) {$6$};
  \node at (6,-0.7) {$4$};
  \node at (8,-0.7) {$2$};
  \node at (10,-0.7) {$2$};
  \node (7) at (11,0) {};
  
  \node[vertex] (v'2) at (2,2) {$v'_2$};
  \node at (1.2,2.4) {$3$};
  \node (v''2) at (2,3) {};
  \node[vertex] (v'4) at (6,2) {$v'_4$};
  \node at (5.2,2.3) {$3$};
  \node (v''4) at (6,3) {};
  \node (v'5) at (8,1) {};
  
  \node[vertex] (v'3) at (4,2) {$v'_3$};
  \node at (3.1,2) {$4$};
  \node[vertex] (v''3) at (3,4) {$v''_3$};
  \node at (2.2,4) {$3$};
  \node (v''31) at (2.5,5) {};
  \node[vertex] (v'''3) at (5,4) {$v'''_3$};
  \node at (4.2,4) {$3$};
  \node (v'''31) at (5.5,5) {};

  \draw[-] (0) -- (1) -- (2) -- (3) -- (4) -- (5) -- (6) -- (7);
  \draw[-] (2) -- (v'2) -- (v''2);
  \draw[-] (3) -- (v'3) -- (v''3) -- (v''31) (v'3) -- (v'''3) -- (v'''31);
  \draw[-] (4) -- (v'4) -- (v''4) (5) -- (v'5);
\end{tikzpicture}
}
}
\caption{Reducible configurations (\Cref{lemma:reducible}).}
\label{fig:reducible}
\end{figure}

\begin{lemma}
\label{lemma:first_reducible_cycles}
Graph $G$ does not contain the configurations depicted in \Cref{fig:first_reducible_cycles}. 
\end{lemma}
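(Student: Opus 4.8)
The plan is to follow the same reduction scheme used throughout \Cref{sec:structural_properties}: assuming $G$ contains one of the configurations of \Cref{fig:first_reducible_cycles}, I would let $S$ denote the set of labeled vertices, take a 2-distance coloring $\phi$ of $G-S$ (which exists by minimality of $G$), record for each vertex of $S$ its number of available colors (the integers drawn in the figure), and then show that $\phi$ always extends to $S$, contradicting the choice of $G$. The novelty compared with \Cref{lemma:reducible} is that these configurations contain a cycle, so I must first pin down exactly which pairs of vertices of $S$ see each other. Since $g\geq 8$, every cycle of $G$ has length at least $8$; in particular the only \emph{extra} distance-2 constraints beyond those visible in $G[S]$ come from the two places where the cycle closes up, i.e.\ a constant number of pairs that I would list explicitly for each subfigure before attempting to color.

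First I would dispose of the bulk of each cycle by cutting it open at a conveniently chosen edge and treating the resulting structure as one of the colorable paths of \Cref{lemma:observations}. Coloring the opened path leaves only the closing edge, together with the two adjacent distance-2 pairs, to be verified. Whenever the colors produced at the two cut endpoints already respect the closure, the extension is complete, exactly as in the direct-implication cases of \Cref{lemma:reducible} (e.g.\ the reductions to \Crefrange{subfig:config1}{subfig:config16}).

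The hard part will be the closure itself: the linear configurations of \Cref{lemma:observations} know nothing about the wrap-around edge, so the naive path coloring may place the same color on the two endpoints that the cycle forces to differ. To overcome this I would invoke the recoloring flexibility of \Cref{lemma:colorable_22422}, which guarantees that a colored sub-path with list sizes $(2,2,4,2,2)$ admits an alternative coloring changing at least one of its endpoints; applying it to the sub-path straddling the closure lets me break an endpoint clash without disturbing the rest of the cycle. In the residual cases where no such recoloring is available, the lists on the relevant vertices are forced into the rigid patterns of \Crefrange{fig:forced_non_colorable_3path}{fig:forced_non_colorable_P5}, and I would argue that these forced patterns are incompatible with the pendant vertices drawn in \Cref{fig:first_reducible_cycles}, so they cannot actually occur and the coloring extends.

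I expect the write-up to be a short per-subfigure case analysis: for each configuration I would name the cut edge, the path lemma used, and then either exhibit the completing color directly or combine \Cref{lemma:colorable_22422} with the forced-list lemmas to rule out the obstruction. The main obstacle throughout is bookkeeping the cyclic distance-2 constraints correctly---making sure that opening the cycle does not silently drop a conflict between vertices that are non-adjacent in $G[S]$ but see each other through the closure---which is precisely the situation where $G[S]^2\neq G^2[S]$ and where the argument departs from the tree-like reductions of \Cref{lemma:reducible}.
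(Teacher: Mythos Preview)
Your outline does not engage with the two mechanisms that actually drive the paper's argument, and the tools you single out are the wrong ones for this lemma.

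First, the failure of $G[S]^2=G^2[S]$ is \emph{not} caused by the cycle closing up. The eight cycle vertices already see one another exactly as drawn; the danger comes from the \emph{pendant} vertices $v'_i,v''_i,v'''_i$ possibly seeing each other (or seeing far-away cycle vertices) through paths of $G\setminus S$. For instance, in \Cref{subfig:1a0c1a0c0c0a} the paper must first rule out that $v'_2$ sees $v'_6$; in \Cref{subfig:1a1a0c0b0a0c} it must rule out that $v''_5$ sees $v_1$; in \Cref{subfig:1a0a0c1c0a0c} that $v'_3$ sees $v'_7$. In each case this is done by showing that such an adjacency would create one of the \emph{already reduced} configurations of \Cref{lemma:reducible} or an earlier case of the present lemma. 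Your plan to ``list explicitly'' the extra pairs coming from ``the two places where the cycle closes up'' misses this entirely.

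Second, the coloring step is not ``cut open the cycle and apply a path lemma from \Cref{lemma:observations}, then repair the closure with \Cref{lemma:colorable_22422}''. \Cref{lemma:colorable_22422} is never used in the proof of \Cref{lemma:first_reducible_cycles}; it only enters later, in \Cref{lemma:colorable_1c1a0a1a0a}. What the paper actually does, in several subfigures, is to \emph{shrink} $S$ to a very small set (e.g.\ $\{v_0,\dots,v_4\}$ in \Cref{subfig:1a1a1a0a0a}, or $\{v_0,v_1,v'_1,v_2\}$ in \Cref{subfig:1c1a0c0a0c0a}), observe that non-extendability forces the lists into the rigid shapes of \Cref{fig:forced_non_colorable_3path}--\Cref{fig:forced_non_colorable_P5} or \Cref{fig:forced_non_colorable_claw}, then \emph{uncolor} a larger neighborhood and recolor it so as to violate those forced equalities (e.g.\ permuting $\phi(v'_6)$ and $\phi(v_7)$, or choosing $\phi(v_6)$ outside a specified pair). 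Other subfigures (\Cref{subfig:1a1a0a0c0c0a}, \Cref{subfig:1a0a0c1c0a0c}) require first establishing several list-intersection facts such as $L(v_1)\cap L(v_4)=\emptyset$ or $L(v_6)=L(v'_7)=L(v'_5)$ by separate mini-arguments before any coloring is attempted. None of this is captured by ``open the cycle and apply a path lemma''; the pendant branches prevent the opened cycle from matching any configuration of \Cref{lemma:observations}, and the proofs are genuinely ad hoc per subfigure.
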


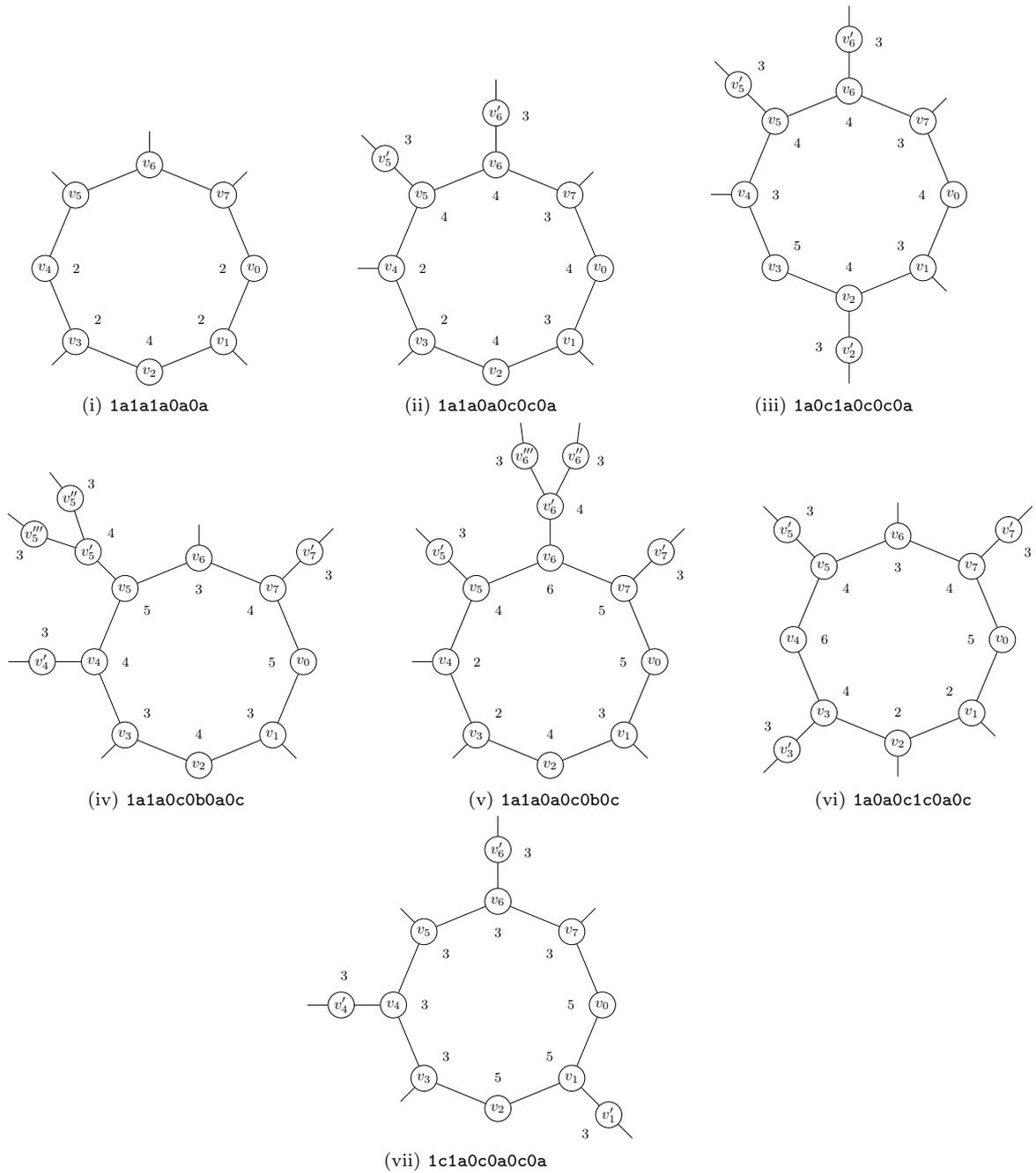
\begin{figure}
\centering
\subfloat[\texttt{1a1a1a0a0a}]{
\label{subfig:1a1a1a0a0a}
\scalebox{0.65}{
	\begin{tikzpicture}[join=bevel,vertex/.style={circle,draw, minimum size=0.6cm},inner sep=0mm,scale=0.8]
\foreach \i in {0,...,7}  \node[vertex] (\i) at (-45.0*\i:3) {$v_\i$};
\foreach \i in {0,...,7}  \draw let \n1={int(mod(\i+1,8))} in (\i) -- (\n1);
\node at (-45.0*0:2.1) {\small $2$};
\node at (-45.0*2:2.1) {\small $4$};
\node at (-45.0*4:2.1) {\small $2$};
\node (u1) at (-45.0*1:4) {};
\draw (1) -- (u1);
\node at (-45.0*1:2.1) {\small $2$};
\node (u3) at (-45.0*3:4) {};
\draw (3) -- (u3);
\node at (-45.0*3:2.1) {\small $2$};
\node (u5) at (-45.0*5:4) {};
\draw (5) -- (u5);
\node at (-45.0*5:2.1) {};
\node (u6) at (-45.0*6:4) {};
\draw (6) -- (u6);
\node at (-45.0*6:2.1) {};
\node (u7) at (-45.0*7:4) {};
\draw (7) -- (u7);
\node at (-45.0*7:2.1) {};
\end{tikzpicture}
}
}
\hspace*{0.7cm}
\subfloat[\texttt{1a1a0a0c0c0a}]{
\label{subfig:1a1a0a0c0c0a}
\scalebox{0.65}{
	\begin{tikzpicture}[join=bevel,vertex/.style={circle,draw, minimum size=0.6cm},inner sep=0mm,scale=0.8]
\foreach \i in {0,...,7}  \node[vertex] (\i) at (-45.0*\i:3) {$v_\i$};
\foreach \i in {0,...,7}  \draw let \n1={int(mod(\i+1,8))} in (\i) -- (\n1);
\node at (-45.0*0:2.1) {\small $4$};
\node at (-45.0*2:2.1) {\small $4$};
\node (u1) at (-45.0*1:4) {};
\draw (1) -- (u1);
\node at (-45.0*1:2.1) {\small $3$};
\node (u3) at (-45.0*3:4) {};
\draw (3) -- (u3);
\node at (-45.0*3:2.1) {\small $2$};
\node (u4) at (-45.0*4:4) {};
\draw (4) -- (u4);
\node at (-45.0*4:2.1) {\small $2$};
\node[vertex] (v'5) at (-45.0*5:4.5) {$v'_5$};
\node (u'5) at (-45.0*5:5.5) {};
\node at (-45.0*5-11:4.5) {\small $3$};
\draw (5) -- (v'5);
\draw (v'5) -- (u'5);
\node at (-45.0*5:2.1) {\small $4$};
\node[vertex] (v'6) at (-45.0*6:4.5) {$v'_6$};
\node (u'6) at (-45.0*6:5.5) {};
\node at (-45.0*6-11:4.5) {\small $3$};
\draw (6) -- (v'6);
\draw (v'6) -- (u'6);
\node at (-45.0*6:2.1) {\small $4$};
\node (u7) at (-45.0*7:4) {};
\draw (7) -- (u7);
\node at (-45.0*7:2.1) {\small $3$};
\end{tikzpicture}
}
}
\hspace*{0.8cm}
\subfloat[\texttt{1a0c1a0c0c0a}]{
\label{subfig:1a0c1a0c0c0a}
\scalebox{0.65}{
	\begin{tikzpicture}[join=bevel,vertex/.style={circle,draw, minimum size=0.6cm},inner sep=0mm,scale=0.8]
\foreach \i in {0,...,7}  \node[vertex] (\i) at (-45.0*\i:3) {$v_\i$};
\foreach \i in {0,...,7}  \draw let \n1={int(mod(\i+1,8))} in (\i) -- (\n1);
\node at (-45.0*0:2.1) {\small $4$};
\node at (-45.0*3:2.1) {\small $5$};
\node (u1) at (-45.0*1:4) {};
\draw (1) -- (u1);
\node at (-45.0*1:2.1) {\small $3$};
\node[vertex] (v'2) at (-45.0*2:4.5) {$v'_2$};
\node (u'2) at (-45.0*2:5.5) {};
\node at (-45.0*2-11:4.5) {\small $3$};
\draw (2) -- (v'2);
\draw (v'2) -- (u'2);
\node at (-45.0*2:2.1) {\small $4$};
\node (u4) at (-45.0*4:4) {};
\draw (4) -- (u4);
\node at (-45.0*4:2.1) {\small $3$};
\node[vertex] (v'5) at (-45.0*5:4.5) {$v'_5$};
\node (u'5) at (-45.0*5:5.5) {};
\node at (-45.0*5-11:4.5) {\small $3$};
\draw (5) -- (v'5);
\draw (v'5) -- (u'5);
\node at (-45.0*5:2.1) {\small $4$};
\node[vertex] (v'6) at (-45.0*6:4.5) {$v'_6$};
\node (u'6) at (-45.0*6:5.5) {};
\node at (-45.0*6-11:4.5) {\small $3$};
\draw (6) -- (v'6);
\draw (v'6) -- (u'6);
\node at (-45.0*6:2.1) {\small $4$};
\node (u7) at (-45.0*7:4) {};
\draw (7) -- (u7);
\node at (-45.0*7:2.1) {\small $3$};
\end{tikzpicture}
}
}

\hspace*{0.5cm}
\subfloat[\texttt{1a1a0c0b0a0c}]{
\label{subfig:1a1a0c0b0a0c}
\scalebox{0.65}{
	\begin{tikzpicture}[join=bevel,vertex/.style={circle,draw, minimum size=0.6cm},inner sep=0mm,scale=0.8]
\foreach \i in {0,...,7}  \node[vertex] (\i) at (-45.0*\i:3) {$v_\i$};
\foreach \i in {0,...,7}  \draw let \n1={int(mod(\i+1,8))} in (\i) -- (\n1);
\node at (-45.0*0:2.1) {\small $5$};
\node at (-45.0*2:2.1) {\small $4$};
\node (u1) at (-45.0*1:4) {};
\draw (1) -- (u1);
\node at (-45.0*1:2.1) {\small $3$};
\node (u3) at (-45.0*3:4) {};
\draw (3) -- (u3);
\node at (-45.0*3:2.1) {\small $3$};
\node[vertex] (v'4) at (-45.0*4:4.5) {$v'_4$};
\node (u'4) at (-45.0*4:5.5) {};
\node at (-45.0*4-11:4.5) {\small $3$};
\draw (4) -- (v'4);
\draw (v'4) -- (u'4);
\node at (-45.0*4:2.1) {\small $4$};
\node[vertex] (v'5) at (-45.0*5:4.5) {$v'_5$};
\node[vertex] (v''5) at (-45.0*5-7:6) {$v''_5$};
\node[vertex] (v'''5) at (-45.0*5+7:6) {$v'''_5$};
\node (u''5) at (-45.0*5-7:7) {};
\node (u'''5) at (-45.0*5+7:7) {};
\node at (-45.0*5-11:4.5) {\small $4$};
\node at (-45.0*5-14:6) {\small $3$};
\node at (-45.0*5+14:6) {\small $3$};
\draw (5) -- (v'5);
\draw (v'5) -- (v''5);
\draw (v'5) -- (v'''5);
\draw (v''5) -- (u''5);
\draw (v'''5) -- (u'''5);
\node at (-45.0*5:2.1) {\small $5$};
\node (u6) at (-45.0*6:4) {};
\draw (6) -- (u6);
\node at (-45.0*6:2.1) {\small $3$};
\node[vertex] (v'7) at (-45.0*7:4.5) {$v'_7$};
\node (u'7) at (-45.0*7:5.5) {};
\node at (-45.0*7-11:4.5) {\small $3$};
\draw (7) -- (v'7);
\draw (v'7) -- (u'7);
\node at (-45.0*7:2.1) {\small $4$};
\end{tikzpicture}
}
}
\hspace*{0.5cm}
\subfloat[\texttt{1a1a0a0c0b0c}]{
\label{subfig:1a1a0a0c0b0c}
\scalebox{0.65}{
	\begin{tikzpicture}[join=bevel,vertex/.style={circle,draw, minimum size=0.6cm},inner sep=0mm,scale=0.8]
\foreach \i in {0,...,7}  \node[vertex] (\i) at (-45.0*\i:3) {$v_\i$};
\foreach \i in {0,...,7}  \draw let \n1={int(mod(\i+1,8))} in (\i) -- (\n1);
\node at (-45.0*0:2.1) {\small $5$};
\node at (-45.0*2:2.1) {\small $4$};
\node (u1) at (-45.0*1:4) {};
\draw (1) -- (u1);
\node at (-45.0*1:2.1) {\small $3$};
\node (u3) at (-45.0*3:4) {};
\draw (3) -- (u3);
\node at (-45.0*3:2.1) {\small $2$};
\node (u4) at (-45.0*4:4) {};
\draw (4) -- (u4);
\node at (-45.0*4:2.1) {\small $2$};
\node[vertex] (v'5) at (-45.0*5:4.5) {$v'_5$};
\node (u'5) at (-45.0*5:5.5) {};
\node at (-45.0*5-11:4.5) {\small $3$};
\draw (5) -- (v'5);
\draw (v'5) -- (u'5);
\node at (-45.0*5:2.1) {\small $4$};
\node[vertex] (v'6) at (-45.0*6:4.5) {$v'_6$};
\node[vertex] (v''6) at (-45.0*6-7:6) {$v''_6$};
\node[vertex] (v'''6) at (-45.0*6+7:6) {$v'''_6$};
\node (u''6) at (-45.0*6-7:7) {};
\node (u'''6) at (-45.0*6+7:7) {};
\node at (-45.0*6-11:4.5) {\small $4$};
\node at (-45.0*6-14:6) {\small $3$};
\node at (-45.0*6+14:6) {\small $3$};
\draw (6) -- (v'6);
\draw (v'6) -- (v''6);
\draw (v'6) -- (v'''6);
\draw (v''6) -- (u''6);
\draw (v'''6) -- (u'''6);
\node at (-45.0*6:2.1) {\small $6$};
\node[vertex] (v'7) at (-45.0*7:4.5) {$v'_7$};
\node (u'7) at (-45.0*7:5.5) {};
\node at (-45.0*7-11:4.5) {\small $3$};
\draw (7) -- (v'7);
\draw (v'7) -- (u'7);
\node at (-45.0*7:2.1) {\small $5$};
\end{tikzpicture}
}
}
\hspace*{0.5cm}
\subfloat[\texttt{1a0a0c1c0a0c}]{
\label{subfig:1a0a0c1c0a0c}
\scalebox{0.65}{
	\begin{tikzpicture}[join=bevel,vertex/.style={circle,draw, minimum size=0.6cm},inner sep=0mm,scale=0.8]
\foreach \i in {0,...,7}  \node[vertex] (\i) at (-45.0*\i:3) {$v_\i$};
\foreach \i in {0,...,7}  \draw let \n1={int(mod(\i+1,8))} in (\i) -- (\n1);
\node at (-45.0*0:2.1) {\small $5$};
\node at (-45.0*4:2.1) {\small $6$};
\node (u1) at (-45.0*1:4) {};
\draw (1) -- (u1);
\node at (-45.0*1:2.1) {\small $2$};
\node (u2) at (-45.0*2:4) {};
\draw (2) -- (u2);
\node at (-45.0*2:2.1) {\small $2$};
\node[vertex] (v'3) at (-45.0*3:4.5) {$v'_3$};
\node (u'3) at (-45.0*3:5.5) {};
\node at (-45.0*3-11:4.5) {\small $3$};
\draw (3) -- (v'3);
\draw (v'3) -- (u'3);
\node at (-45.0*3:2.1) {\small $4$};
\node[vertex] (v'5) at (-45.0*5:4.5) {$v'_5$};
\node (u'5) at (-45.0*5:5.5) {};
\node at (-45.0*5-11:4.5) {\small $3$};
\draw (5) -- (v'5);
\draw (v'5) -- (u'5);
\node at (-45.0*5:2.1) {\small $4$};
\node (u6) at (-45.0*6:4) {};
\draw (6) -- (u6);
\node at (-45.0*6:2.1) {\small $3$};
\node[vertex] (v'7) at (-45.0*7:4.5) {$v'_7$};
\node (u'7) at (-45.0*7:5.5) {};
\node at (-45.0*7-11:4.5) {\small $3$};
\draw (7) -- (v'7);
\draw (v'7) -- (u'7);
\node at (-45.0*7:2.1) {\small $4$};
\end{tikzpicture}
}
}

\hspace*{-1cm}
\subfloat[\texttt{1c1a0c0a0c0a}]{
\label{subfig:1c1a0c0a0c0a}
\scalebox{0.65}{
	\begin{tikzpicture}[join=bevel,vertex/.style={circle,draw, minimum size=0.6cm},inner sep=0mm,scale=0.8]
\foreach \i in {0,...,7}  \node[vertex] (\i) at (-45.0*\i:3) {$v_\i$};
\foreach \i in {0,...,7}  \draw let \n1={int(mod(\i+1,8))} in (\i) -- (\n1);
\node at (-45.0*0:2.1) {\small $5$};
\node at (-45.0*2:2.1) {\small $5$};
\node[vertex] (v'1) at (-45.0*1:4.5) {$v'_1$};
\node (u'1) at (-45.0*1:5.5) {};
\node at (-45.0*1-11:4.5) {\small $3$};
\draw (1) -- (v'1);
\draw (v'1) -- (u'1);
\node at (-45.0*1:2.1) {\small $5$};
\node (u3) at (-45.0*3:4) {};
\draw (3) -- (u3);
\node at (-45.0*3:2.1) {\small $3$};
\node[vertex] (v'4) at (-45.0*4:4.5) {$v'_4$};
\node (u'4) at (-45.0*4:5.5) {};
\node at (-45.0*4-11:4.5) {\small $3$};
\draw (4) -- (v'4);
\draw (v'4) -- (u'4);
\node at (-45.0*4:2.1) {\small $3$};
\node (u5) at (-45.0*5:4) {};
\draw (5) -- (u5);
\node at (-45.0*5:2.1) {\small $3$};
\node[vertex] (v'6) at (-45.0*6:4.5) {$v'_6$};
\node (u'6) at (-45.0*6:5.5) {};
\node at (-45.0*6-11:4.5) {\small $3$};
\draw (6) -- (v'6);
\draw (v'6) -- (u'6);
\node at (-45.0*6:2.1) {\small $3$};
\node (u7) at (-45.0*7:4) {};
\draw (7) -- (u7);
\node at (-45.0*7:2.1) {\small $3$};
\end{tikzpicture}
}
}
\caption{Reducible configuration in \Cref{lemma:first_reducible_cycles}}
\label{fig:first_reducible_cycles}
\end{figure}

\begin{proof}

\figureproof{\Cref{subfig:1a1a1a0a0a}}
Here, we redefine $S=\{v_0,v_1,v_2,v_3,v_4\}$. By \Cref{fig:forced_non_colorable_P5}, $L(v_0)=L(v_1)=\{a,b\}$, $L(v_3)=L(v_4)=\{c,d\}$ and $L(v_2)=\{a,b,c,d\}$. Therefore, we can assume w.l.o.g that $v_6$ is colored $e$. Since $|L(v_0)|=2$, all of the colored vertices that $v_0$ sees must be colored differently. The same holds for $v_4$. However, it means that $v_2$ does not see the color $e$, which is impossible since $L(v_2)=\{a,b,c,d\}$.
\hfill\smallqed\medskip

\figureproof{\Cref{subfig:1a1a0a0c0c0a}}
Note that \gs. We first prove three important observations.
\begin{itemize}
\item $L(v_7)\neq L(v'_6)$. Suppose the contrary and color $v_7$, $v_6$, $v'_6$, $v_5$, $v'_5$, $v_4$, $v_3$ by \Cref{subfig:config8}. Now if $v_0$, $v_1$ and $v_2$ are colorable, then we are done. Thus according to \Cref{fig:forced_non_colorable_3path}, we can assume that $L(v_1)\subset L(v_0)=L(v_2)$. But then since by our assumption $L(v_7)= L(v'_6)$, we permute the colors of $v'_6$ and $v_7$ so that $L(v_0)\neq L(v_2)$ and we are done.
\item $L(v_3)\subset L(v_2)\supset L(v_4)$. If not, color $v_3$ and $v_4$ such that $|L(v_2)|\geq 3$. Recall that $L(v_7)\neq L(v'_6)$. Hence we color $v'_5$, $v_5$, $v_6$, $v'_6$, $v_7$ by \Cref{fig:forced_non_colorable_clawpath}. We finish by coloring $v_1$, $v_0$, $v_2$ in this order.
\item $L(v_1)\cap L(v_4)=\emptyset$. By contradiction, suppose $a\in L(v_1)\cap L(v_4)$. We will show the following observations.
\begin{itemize}
\item $a\notin L(v'_6)$. If $a\in L(v'_6)$, we color $v_1$, $v_4$ and $v'_6$ with $a$. Then, we color $v_3$. After that, we color $v'_5$, $v_5$, $v_6$, $v_7$ by \Cref{subfig:config1} and we finish by coloring $v_0$ and $v_2$ in this order.
\item $a\in L(v_7)$. If $a\notin L(v_7)$, we color $v_1$ and $v_4$ with $a$. Then, we color $v_3$. After that, we color $v'_5$, $v_5$, $v_6$, $v'_6$, $v_7$ by \Cref{fig:forced_non_colorable_clawpath} (recall that $L(v'_6)\neq L(v_7)$) and finish by coloring $v_0$ and $v_2$ in this order.
\item $a\in L(v'_5)$. If $a\notin L(v'5)$, we color $v_4$ and $v_7$ with $a$. Then, we color $v_3$. Finally, we finish by coloring $v_1$, $v_2$, $v_0$, $v_6$, $v_5$, $v'_6$, $v'_5$ in this order.
\item $|L(v_3)\setminus\{a\}|=1$. Otherwise, we color $v_4$ and $v_7$ with $a$. Then, we color $v_5$ in such a way that $v_3$ has at least 2 colors left. After that, we color $v'_5$, $v_5$, $v_6$, $v_7$ in this order. Finally, we finish by coloring $v_3$, $v_2$, $v_1$, $v_0$ by \Cref{subfig:config1}. 
\end{itemize}

Thus, we color $v'_5$, $v_3$ and $v_7$ with $a$, then we color the remaining vertices in the following order: $v_4$, $v_2$, $v_1$, $v_0$, $v_6$, $v_5$, $v'_6$.
\end{itemize}

Since $L(v_1)\cap L(v_4)=\emptyset$, we assume w.l.o.g. that $L(v_4)\subseteq \{a,b,c\}$ and $L(v_1)=\{d,e,f\}$. As $L(v_3)\subset L(v_2)\supset L(v_4)$, there exists a color, say $d$, in $L(v_1)$ such that after coloring $v_1$ with $d$, we have $|L(v_2)|\geq 4$ and $|L(v_3)|,|L(v_3)|\geq 2$. In conclusion, we color $v_1$ with $d$, $v_7$, $v_6$, $v'_6$, $v_5$, $v'5$, $v_4$, $v_3$ by \Cref{subfig:config8} and finish by coloring $v_0$ and $v_2$ in this order.

\hfill\smallqed\medskip

\figureproof{\Cref{subfig:1a0c1a0c0c0a}}
If $v'_2$ sees $v'_6$, then the are at distance exactly 2 and share a common neighbor, say $v_8$. Then vertices $v'_6$, $v_8$, $v'_2$, $v_2$, $v_3$, $v_4$, $v_5$, $v_6$ correspond to the reducible configuration of \Cref{subfig:1a1a1a0a0a}.

Therefore, we can assume that \gs. Color $v_2$ with $x\notin L(v'_2)$ and color greedily $v_1$. Then color vertices $v_4$, $v_5$, $v'_5$, $v_6$, $v'_6$, $v_7$, $v_0$ by \Cref{subfig:config7} and finish by coloring $v_3$ and $v'_2$ in this order.
\hfill\smallqed\medskip

\figureproof{\Cref{subfig:1a1a0c0b0a0c}}
If $v''_5$ sees $v_1$ by sharing a common neighbor, say $v_8$, then vertices $v'''_5$, $v'_5$, $v''_5$, $v_8$, $v_1$, $v_2$, $v_0$ form the reducible configuration of \Cref{subfig:1a1a0c1}. The case when $v'''_5$ sees $v_1$ is symmetric. 

Therefore, we can suppose that \gs. First we show that $L(v_1)\cap L(v_7)=\emptyset$. Suppose the contrary and color $v_1$ and $v_7$ with a same color. Then restrict $L(v_5)$ to $L(v_5)\setminus L(v''_5)$ and color vertices $v_6$, $v_5$, $v_4$, $v'_4$, $v_3$ by \Cref{subfig:config5}. Finish by coloring vertices $v'_5$, $v'''_5$, $v''_5$, $v_7$, $v_2$, $v_0$ in this order.

Observe that $L(v_1)\subset L(v_0)$. Therefore, since $L(v_1)\cap L(v_7)=\emptyset$ and since we are doing a 6-coloring, we conclude that $L(v'_7)\not\subset L(v_0)$.

We color $v'_5$ with $x\notin L(v''_5)$ and $v_6$, $v_5$, $v_4$, $v'_4$, $v_3$ by \Cref{subfig:config4}. Then we color $v'''_5$ and $v''_5$ in this order. Observe the remaining uncolored vertices are $v'_7$, $v_7$, $v_0$, $v_1$ and $v_2$. If the lists of available colors of these vertices, do not correspond to \Cref{fig:forced_non_colorable_P5}, then we are done. And it is indeed the case, since the only colored vertex seen by both $v_0$ and $v'_7$ is $v_6$, and since initially $L(v'_7)\not\subset L(v_0)$.
\hfill\smallqed\medskip

\figureproof{\Cref{subfig:1a1a0a0c0b0c}} We have \gs. Color vertices $v_0$ and $v_4$ with the same color by pigeonhole principle and then $v_3$, $v_1$ and $v_2$ in this order. The remaining vertices can be colored by \Cref{subfig:config10}.

\hfill\smallqed\medskip

\figureproof{\Cref{subfig:1a0a0c1c0a0c}}
If $v'_3$ sees $v'_7$, then they must be at distance exactly 2 since $G$ has girth $8$. Say $v_8$ is their common neighbor, then $v_0$, $v_7$, $v'_7$, $v_8$, $v'_3$, $v_3$, $v_2$ and $v_1$ form the reducible configuration from \Cref{subfig:1a1a1a0a0a}.

Thus, we have \gs. First, observe that $|L(v'_7)|=|L(v'_5)|=|L(v'_6)|=3$ and we will prove the following:
\begin{itemize}
\item $L(v_6)=L(v'_7)$. Otherwise, color $v_6$ differently from $L(v'_7)$, then color $v_1$ and $v_2$ in this order. Color $v'_5$, $v_5$, $v_4$, $v_3$, and $v'_3$ by \Cref{subfig:config13}. Finish by coloring $v_7$, $v_0$, and $v'_7$ in this order. 
\item $L(v_6)=L(v'_5)$. Otherwise, color $v_6$ differently from $L(v'_5)$, then color $v'_7$, $v_7$, $v_0$, $v_1$, and $v_2$ by \Cref{subfig:config2}. Finish by coloring $v'_3$, $v_3$, $v_5$, $v_4$, and $v'_5$ in this order.
\item $L(v_1)\cap L(v'_7)=\emptyset$. Otherwise, color $v_1$ and $v'_7$ with $x\in L(v_1)\cap L(v'_7)$. Then, color $v_2$ and $v_6$. Color $v'_5$, $v_5$, $v_4$, $v_3$, and $v'_3$ by \Cref{subfig:config13}. Finish by coloring $v_7$ and $v_0$ in this order.  
\end{itemize}
Using the equalities above, we have the following. Color $v_7$ differently from $L(v_6)$ and $L(v'_7)$. Now, color $v_1$ and $v_4$ with the same color, which is possible since $v_4$ has all six colors available. Observe that, since $L(v_1)\cap L(v'_7)=\emptyset$ and $L(v'_7)=L(v_6)=L(v'_5)$, $v_6$ and $v_5$ still have the same amount of available colors remaining. Finish by coloring $v_2$, $v'_3$, $v_3$, $v_5$, $v_6$, $v'_5$, $v_0$, and $v'_7$ in this order.

\hfill\smallqed\medskip

\figureproof{\Cref{subfig:1c1a0c0a0c0a}}
Note that \gs. Here, we redefine $S=\{v_0,v_1,v'_1,v_2\}$. Consider $\phi$ a coloring of $G-S$. Note that if $\phi$ is extendable to $G$, then we have a contradiction. Thus, $L(v_0)=L(v_1)=L(v'_1)=L(v_2)=\{a,b,c\}$ by \Cref{fig:forced_non_colorable_claw}. Now, we uncolor $v_3$, $v_4$, $v'_4$, $v_5$, $v_6$, $v'_6$, $v_7$ and note that the number of available colors correspond to what is depicted in \Cref{subfig:1c1a0c0a0c0a}. We assume w.l.o.g. that $L(v_0)=\{a,b,c,d,e\}$ where $d=\phi(v_7)$ and $e=\phi(v_6)$. Observe that $L(v'_6)\neq L(v_7)$, otherwise, we can permute the colors of $v'_6$ and $v_7$ in $\phi$ and extend $\phi$ to $G$ as $L(v_0)$ would no longer be $\{a,b,c\}$. Symmetrically, $L(v_3)\neq L(v'_4)$. 

If $d\notin L(v'_6)$, then we can color $v_7$ with $d$, $v_6$ with $x\neq e$, $v_5$, then $v_3$, $v_4$, $v'_4$ by \Cref{fig:forced_non_colorable_3path} since $L(v_3)\neq L(v'4)$, and finish by coloring $v'_6$. As $L(v_0)\neq \{a,b,c\}$, $\phi$ is extendable to $G$. 

Now, $d\in L(v'_6)$. In which case, there exists $ y\in L(v_7)\setminus L(v'6)$ so we color $v_7$ with $y$, $v_6$ with $z\neq d$, $v_5$, then $v_3$, $v_4$, $v'_4$ and finish by coloring $v'_6$. Finally, $\phi$ is extendable to $G$ because $L(v_0)\neq \{a,b,c\}$.

\hfill\smallqed\medskip

\end{proof}

\begin{lemma}
\label{lemma:special_reducible}
Graph $G$ does not contain the configurations depicted in \Cref{fig:special_reducible}. 
\end{lemma}
\begin{proof}

\figureproof{\Cref{subfig:c1a1c}}
If $v_1$ does not see $v_7$. Then the proof is a direct implication of \Cref{subfig:config15}.
If $v_1$ sees $v_7$, then they must be at distance exactly 2 since $G$ has girth at least 8 and therefore $|L(v_1)|\geq 3$ and $|L(v_7)|\geq 3$. We color $v_1$ such that $v_2$ has at least 2 colors left. We then obtain \Cref{subfig:config13}.
\hfill\smallqed\medskip

\figureproof{\Cref{subfig:c1a0c0c0c}}
If $v_1$ sees $v'_6$, then they must be at distance exactly 2 since $G$ has girth at least 8. Say $v_0$ is their common neighbor, then $v'_6,v_0,v_1,\dots,v_6$ form the reducible configuration from \Cref{subfig:1a1a1a0a0a}. If $v_1$ sees $v_7$, then they share a common neighbor $v_0$ and $v_1$, $v_2$, $v_3$, $v_4$, $v_5$, $v'_5$, $v_6$, $v'_6$, $v_7$, $v_8$, $v_0$ form the reducible configuration from \Cref{subfig:1a1a0a0c0c0a}. If $v_2$ sees $v_8$, then they share a common neighbor $v'_2$ and $v_8$, $v'_2$, $v_2$, $v_1$, $v_3$, $v_4$, $v_5$, $v'_5$, $v_6$, $v'_6$, $v_7$ form the reducible configuration from \Cref{subfig:1a0c1a0c0c0a}.

If $v_1$ sees $v_8$, they must be at distance exactly 2 since both are 2-vertices and there are no 2-paths due to \Cref{lemma:no 2-path}. Thus, $3\leq |L(v_1)|, |L(v_8)|\leq 4$. If we can color $v_2$ such that $v_1$ has at least 3 colors left, then we can color $v_4$, $v_5$, $v'_5$, $v_6$, $v'_6$, $v_7$, $v_8$ by \Cref{subfig:config8} and finish by coloring $v_3$ and $v_1$ in this order. Therefore, $|L(v_1)| = 3$ and $L(v_2)\subseteq L(v_1)$. We color $v_3$ with $x\notin L(v_1)$. Then, we color $v_4$, $v_5$, $v'_5$, $v_6$, $v'_6$, $v_7$ by \Cref{subfig:config7} and finish by coloring $v_8$, $v_2$ and $v_1$ in this order.

Now, \gs. If we can color $v_2$ such that $v_1$ has at least 2 colors left, then we can color $v_4$, $v_5$, $v'_5$, $v_6$, $v'_6$, $v_7$, $v_8$ by \Cref{subfig:config8}, and finish by coloring $v_3$ and $v_1$ in this order. Therefore, $L(v_1)=L(v_2)$ and $|L(v_1)|=2$. We restrict $L(v_3)$ to $L(v_3)\setminus L(v_1)$. Then, we color $v_3$, $v_4$, $v_5$, $v'_5$, $v_6$, $v'_6$, $v_7$, $v_8$ by \Cref{subfig:config9} and finish by coloring $v_2$ and $v_1$ in this order.

\figureproof{\Cref{subfig:1a0b0b0c0c1}}
If $v''_3$ sees $v_7$, then they must be at distance exactly 2 since $G$ has girth $8$. Say $v_8$ is their common neighbor, then $v''_3$, $v'_3$, $v'''_3$, $v_8$, $v_7$, $v_6$, $v'_6$ form the reducible configuration from \Cref{subfig:c1a1c}. Note that the cases when $v'''_3$ sees $v_7$, or $v''_3$ sees $v'_6$, or $v''_3$ sees $v_7$ are symmetric.

Observe that since $v_1$ cannot see both $v'_6$ and $v_7$, we can assume that $v_1$ does not see $v'_6$. Note that in this case $|L(v'_6)|=3$. Thus we restrict $L(v_5)$ to $L(v_5)\setminus L(v'_6)$ and $L(v_4)$ to $L(v_4)\setminus L(v''_4)$. We color vertices $v_5$, $v_4$, $v_3$, $v_2$, $v_1$, $v'_3$, $v''_3$, $v'''_3$ by \Cref{subfig:config10}. Then finish by coloring $v'_5$, $v'_4$, $v'''_4$, $v''_4$, $v_6$, $v_7$, $v'_6$ in this order.
\hfill\smallqed\medskip

\end{proof}

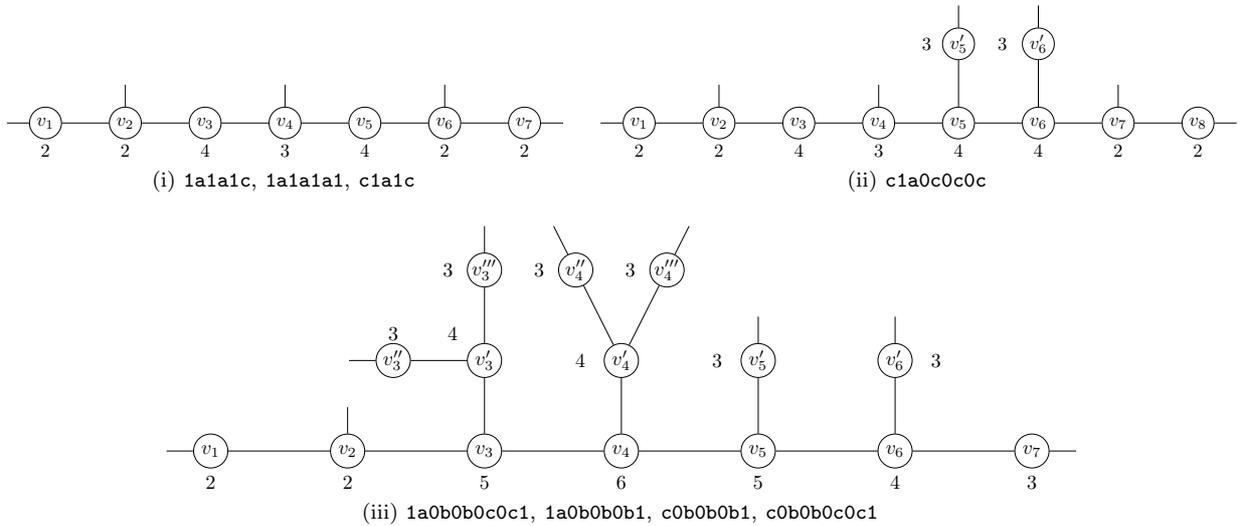
\begin{figure}[H]
\centering
\subfloat[\texttt{1a1a1c}, \texttt{1a1a1a1}, \texttt{c1a1c}]
{
\label{subfig:c1a1c}
\scalebox{0.7}{%
\begin{tikzpicture}[join=bevel,vertex/.style={circle,draw, minimum size=0.6cm},inner sep=0mm,scale=0.75]
 \foreach[count=\l from 1] \i in {0,2,...,12}{
  	\node[vertex] (\l) at (\i,0) {$v_\l$};
  }
  \node (0) at (-1,0) {};
  \node at (0,-0.7) {$2$};
  \node at (2,-0.7) {$2$};
  \node at (4,-0.7) {$4$};
  \node at (6,-0.7) {$3$};
  \node at (8,-0.7) {$4$};
  \node at (10,-0.7) {$2$};
  \node at (12,-0.7) {$2$};
  \node (8) at (13,0) {};
  \node (v'2) at (2,1) {};
  \node (v'4) at (6,1) {};
  \node (v'6) at (10,1) {};

  \draw[-] (0) -- (1) -- (2) -- (3) -- (4) -- (5) -- (6) -- (7) -- (8);
  \draw[-] (2) -- (v'2) (4) -- (v'4) (6) -- (v'6);
\end{tikzpicture}
}
}
\subfloat[\texttt{c1a0c0c0c}]
{
\label{subfig:c1a0c0c0c}
\scalebox{0.7}{%
\begin{tikzpicture}[join=bevel,vertex/.style={circle,draw, minimum size=0.6cm},inner sep=0mm,scale=0.75]
 \foreach[count=\l from 1] \i in {0,2,...,14}{
  	\node[vertex] (\l) at (\i,0) {$v_\l$};
  }
  \node (0) at (-1,0) {};
  \node at (0,-0.7) {$2$};
  \node at (2,-0.7) {$2$};
  \node at (4,-0.7) {$4$};
  \node at (6,-0.7) {$3$};
  \node at (8,-0.7) {$4$};
  \node at (10,-0.7) {$4$};
  \node at (12,-0.7) {$2$};
  \node at (14,-0.7) {$2$};
  \node (9) at (15,0) {};
  \node (v'2) at (2,1) {};
  \node (v'4) at (6,1) {};
  \node[vertex] (v'5) at (8,2) {$v'_5$};
  \node at (7.2,2) {$3$};  
  \node (v''5) at (8,3) {};
  \node[vertex] (v'6) at (10,2) {$v'_6$};
  \node at (9.1,2) {$3$};
  \node (v''6) at (10,3) {};
  \node (v'7) at (12,1) {};

  \draw[-] (0) -- (1) -- (2) -- (3) -- (4) -- (5) -- (6) -- (7) -- (8) -- (9);
  \draw[-] (2) -- (v'2) (4) -- (v'4) (5) -- (v'5) -- (v''5) (6) -- (v'6) -- (v''6) (7) -- (v'7);
\end{tikzpicture}
}
}

\subfloat[\texttt{1a0b0b0c0c1}, \texttt{1a0b0b0b1}, \texttt{c0b0b0b1}, \texttt{c0b0b0c0c1}]
{
\label{subfig:1a0b0b0c0c1}
\scalebox{0.75}{%
\begin{tikzpicture}[join=bevel,vertex/.style={circle,draw, minimum size=0.6cm},inner sep=0mm,scale=0.8]
 \foreach[count=\l from 1] \i in {0,3,...,18}{
  	\node[vertex] (\l) at (\i,0) {$v_\l$};
  }
  \node (0) at (-1,0) {};
  \node at (0,-0.7) {$2$};
  \node at (3,-0.7) {$2$};
  \node at (6,-0.7) {$5$};
  \node at (9,-0.7) {$6$};
  \node at (12,-0.7) {$5$};
  \node at (15,-0.7) {$4$};
  \node at (18,-0.7) {$3$};
  \node (8) at (19,0) {};
  \node (v'2) at (3,1) {};
  
  \node[vertex] (v'3) at (6,2) {$v'_3$};
  \node at (5.3,2.6) {$4$};
  \node[vertex] (v''3) at (4,2) {$v''_3$};
  \node at (4,2.6) {$3$};
  \node (v''31) at (3,2) {};
  \node[vertex] (v'''3) at (6,4) {$v'''_3$};
  \node at (5.2,4) {$3$};
  \node (v'''31) at (6,5) {};
  
  \node[vertex] (v'4) at (9,2) {$v'_4$};
  \node at (8.1,2) {$4$};
  \node[vertex] (v''4) at (8,4) {$v''_4$};
  \node at (7.2,4) {$3$};
  \node (v''41) at (7.5,5) {};
  \node[vertex] (v'''4) at (10,4) {$v'''_4$};
  \node at (9.2,4) {$3$};
  \node (v'''41) at (10.5,5) {};
  
  \node (v''5) at (12,3) {};
  \node[vertex] (v'5) at (12,2) {$v'_5$};
  \node at (11.1,2) {$3$};
  \node[vertex] (v'6) at (15,2) {$v'_6$};
  \node at (15.9,2) {$3$};
  \node (v''6) at (15,3) {};

  \draw[-] (0) -- (1) -- (2) -- (3) -- (4) -- (5) -- (6) -- (7) -- (8);
  \draw[-] (2) -- (v'2) (3) -- (v'3) -- (v''3) -- (v''31) (v'3) -- (v'''3) -- (v'''31) (4) -- (v'4) -- (v''4) -- (v''41) (v'4) -- (v'''4) -- (v'''41) (5) -- (v'5) -- (v''5) (6) -- (v'6) -- (v''6);
\end{tikzpicture}
}
}
\caption{Reducible configurations in \Cref{lemma:special_reducible}.}
\label{fig:special_reducible}
\end{figure}

\begin{lemma}
\label{lemma:reducible_cycles}
Graph $G$ does not contain the $8$-faces depicted in \Cref{fig:reducible_cycles} in the Appendix. 
\end{lemma}

The proof of \Cref{lemma:reducible_cycles} is also in the Appendix. It follows the same scheme as \Cref{lemma:first_reducible_cycles} and uses \Cref{sec:useful} as well as the previous lemma. There are a lot of configurations and their proofs are quite tedious and do not contribute extra value to what we already know, even though they are necessary.

\medskip

We have started out by coloring these configurations by computer (by testing all precoloring of the set of vertices separating our configuration from the rest of the graph) but this proves to be very time consuming. Moreover, there are tricks that can be done manually (restricting the considered set of vertices in the configurations, uncoloring then recoloring part of the configuration) that can hardly be replicated by computer. Concretely, it means that not all precoloring is a possible precoloring of a proper subgraph of $G$ and we cannot know which precoloring to test, which not to with our naive approach.

\begin{lemma} \label{lemma:1c1a0a1a0a}
Consider the configuration in \Cref{subfig:1c1a0a1a0a}. If $v_3$, $v_4$, $v_5$, $v_6$, and $v_7$ are colorable, but the configuration as a whole is not, then $L(v_3)=L(v_4)=L(v_6)=L(v_7)=L(v_1)\setminus L(v'_1)$ and $|L(v_3)|=2$.
\end{lemma}

\begin{figure}[H]
\centering
\scalebox{0.75}{
\begin{tikzpicture}[join=bevel,vertex/.style={circle,draw, minimum size=0.6cm},inner sep=0mm,scale=0.8]
\foreach \i in {0,...,7}  \node[vertex] (\i) at (-45.0*\i:3) {$v_\i$};
\foreach \i in {0,...,7}  \draw let \n1={int(mod(\i+1,8))} in (\i) -- (\n1);
\node at (-45.0*0:2.1) {\small $5$};
\node at (-45.0*2:2.1) {\small $5$};
\node at (-45.0*5:2.1) {\small $4$};
\node[vertex] (v'1) at (-45.0*1:4.5) {$v'_1$};
\node (u'1) at (-45.0*1:5.5) {};
\node at (-45.0*1-11:4.5) {\small $3$};
\draw (1) -- (v'1);
\draw (v'1) -- (u'1);
\node at (-45.0*1:2.1) {\small $5$};
\node (u3) at (-45.0*3:4) {};
\draw (3) -- (u3);
\node at (-45.0*3:2.1) {\small $2$};
\node (u4) at (-45.0*4:4) {};
\draw (4) -- (u4);
\node at (-45.0*4:2.1) {\small $2$};
\node (u6) at (-45.0*6:4) {};
\draw (6) -- (u6);
\node at (-45.0*6:2.1) {\small $2$};
\node (u7) at (-45.0*7:4) {};
\draw (7) -- (u7);
\node at (-45.0*7:2.1) {\small $2$};
\end{tikzpicture}
}
\caption{\label{subfig:1c1a0a1a0a} \texttt{1c1a0a1a0a}}
\end{figure}

\begin{proof}
First, observe that we have \gs. We color $v_3$, $v_4$, $v_5$, $v_6$, and $v_7$. Observe that $|L(v_0)|=|L(v_2)|=|L(v'_1)|=3$ and $|L(v_1)|\geq 3$. So, the remaining vertices are not colorable if and only if $L(v_0)=L(v_1)=L(v'_1)=L(v_2)=\{a,b,c\}$ w.l.o.g. due to \Cref{fig:forced_non_colorable_claw}.
 
Now, let $\{d,e\}=L(v_1)\setminus L(v'_1)$ and uncolor $v_3$, $v_4$, $v_5$, $v_6$, and $v_7$. Due to our previous observations, we can assume w.l.o.g. that $v_3$ and $v_7$ must have been colored $d$ and $e$ respectively. Moreover, due to \Cref{lemma:colorable_22422}, since we know that $v_3$, $v_4$, $v_5$, $v_6$, and $v_7$ are colorable, there exists another coloring of these vertices where $v_3$ is not colored $d$ or $v_7$ is not colored $e$. As $v_0$, $v_1$, $v'_1$, and $v_2$ must remain uncolorable, we know that $v_3$ must have been colored $e$ and $v_7$ colored $d$. So, we know that $\{d,e\}\subseteq L(v_3)$ and $\{d,e\}\subseteq L(v_7)$.
In addition, when $v_3$ was colored $d$ ($e$), $d$ ($e$) must be in $L(v_2)$ or we would have had $|L(v_2)|\geq 4$ after the coloring of $v_3$, $v_4$, $v_5$, $v_6$, and $v_7$. In other words, $L(v_2)=\{a,b,c,d,e\}$. Symmetrically, the same holds for $L(v_0)$.
Knowing that $L(v_2)=\{a,b,c,d,e\}$, when $v_3$ was colored $d$ ($e$), $v_4$ must have been colored $e$ ($d$). So we get $\{d,e\}\subseteq L(v_4)$. Similarly, the same holds for $L(v_6)$.
Finally, if any of $v_3$, $v_4$, $v_6$, or $v_7$ has another available color than $d$ and $e$, we could have colored them with one vertex not colored $d$, nor $e$, and finish coloring the rest of the configuration due to \Cref{fig:forced_non_colorable_P5} and \Cref{fig:forced_non_colorable_claw}, which is impossible. Consequently, we have $L(v_3)=L(v_4)=L(v_6)=L(v_7)=L(v_1)\setminus L(v'_1)=\{d,e\}$. 
\end{proof}

\begin{lemma} \label{lemma:colorable_1c1a0a1a0a}
The configurations in \Cref{fig:colorable_1c1a0a1a0a} are colorable.
\end{lemma}

\begin{figure}[H]
\centering
\subfloat[]{
\label{subfig:colorable_1c1a0a1a0a3}
\scalebox{0.75}{
\begin{tikzpicture}[join=bevel,vertex/.style={circle,draw, minimum size=0.6cm},inner sep=0mm,scale=0.8]
\foreach \i in {0,...,7}  \node[vertex] (\i) at (-45.0*\i:3) {$v_\i$};
\foreach \i in {0,...,7}  \draw let \n1={int(mod(\i+1,8))} in (\i) -- (\n1);
\node at (-45.0*0:2.1) {\small $5$};
\node at (-45.0*2:2.1) {\small $5$};
\node at (-45.0*5:2.1) {\small $6$};
\node[vertex] (v'1) at (-45.0*1:4.5) {$v'_1$};
\node (u'1) at (-45.0*1:5.5) {};
\node at (-45.0*1-11:4.5) {\small $3$};
\draw (1) -- (v'1);
\draw (v'1) -- (u'1);
\node at (-45.0*1:2.1) {\small $5$};
\node (u3) at (-45.0*3:4) {};
\draw (3) -- (u3);
\node at (-45.0*3:2.1) {\small $3$};
\node at (-45.0*4:2.1) {\small $4$};
\node at (-45.0*6:2.1) {\small $4$};
\node (u7) at (-45.0*7:4) {};
\draw (7) -- (u7);
\node at (-45.0*7:2.1) {\small $3$};

\node[vertex,xshift=-2.3cm,yshift=2.3cm] (v'4) at (-45.0*3:3) {$v'_4$};
\node[xshift=-2.3cm,yshift=2.3cm] (u'4) at (-45.0*3:4) {};
\node[xshift=-2.3cm,yshift=2.3cm] at (-45.0*3:2.1) {\small $3$};
\node[vertex,xshift=-2.3cm,yshift=2.3cm] (v''4) at (-45.0*4:3) {$v''_4$};
\node[vertex,xshift=-2.3cm,yshift=2.3cm] (u''4) at (-45.0*4:4.5) {$u''_4$};
\node[xshift=-2.3cm,yshift=2.3cm] at (-45.0*4-11:4.5) {\small $3$};
\node[xshift=-2.3cm,yshift=2.3cm] (u'''4) at (-45.0*4:5.5) {};
\node[xshift=-2.3cm,yshift=2.3cm] at (-45.0*4:2.1) {\small $4$};
\node[vertex,xshift=-2.3cm,yshift=2.3cm] (v8) at (-45.0*5:3) {$v_8$};
\node[vertex,xshift=-2.3cm,yshift=2.3cm] (u8) at (-45.0*5:4.5) {$u_8$};
\node[xshift=-2.3cm,yshift=2.3cm] at (-45.0*5-11:4.5) {\small $3$};
\node[xshift=-2.3cm,yshift=2.3cm] (u'8) at (-45.0*5:5.5) {};
\node[xshift=-2.3cm,yshift=2.3cm] at (-45.0*5:2.1) {\small $5$};
\node[vertex,xshift=-2.3cm,yshift=2.3cm] (v''6) at (-45.0*6:3) {$v''_6$};
\node[vertex,xshift=-2.3cm,yshift=2.3cm] (u''6) at (-45.0*6:4.5) {$u''_6$};
\node[xshift=-2.3cm,yshift=2.3cm] at (-45.0*6-11:4.5) {\small $3$};
\node[xshift=-2.3cm,yshift=2.3cm] (u'''6) at (-45.0*6:5.5) {};
\node[xshift=-2.3cm,yshift=2.3cm] at (-45.0*6:2.1) {\small $4$};
\node[vertex,xshift=-2.3cm,yshift=2.3cm] (v'6) at (-45.0*7:3) {$v'_6$};
\node[xshift=-2.3cm,yshift=2.3cm] (u'6) at (-45.0*7:4) {};
\node[xshift=-2.3cm,yshift=2.3cm] at (-45.0*7:2.1) {\small $3$};

\draw (4) -- (v'4);
\draw (6) -- (v'6);
\draw (v'6) -- (v''6);
\draw (v'4) -- (v''4);
\draw (v8) -- (v''6);
\draw (v8) -- (v''4);
\draw (v8) -- (u8);
\draw (u8) -- (u'8);
\draw (v'4) -- (u'4);
\draw (v''4) -- (u''4);
\draw (u''4) -- (u'''4);
\draw (v''6) -- (u''6);
\draw (u''6) -- (u'''6);
\draw (v'6) -- (u'6);
\end{tikzpicture}
}
}
\subfloat[]{
\label{subfig:colorable_1c1a0a1a0a2}
\scalebox{0.75}{
\begin{tikzpicture}[join=bevel,vertex/.style={circle,draw, minimum size=0.6cm},inner sep=0mm,scale=0.8]
\foreach \i in {0,...,7}  \node[vertex] (\i) at (-45.0*\i:3) {$v_\i$};
\foreach \i in {0,...,7}  \draw let \n1={int(mod(\i+1,8))} in (\i) -- (\n1);
\node at (-45.0*0:2.1) {\small $5$};
\node at (-45.0*2:2.1) {\small $5$};
\node at (-45.0*5:2.1) {\small $6$};
\node[vertex] (v'1) at (-45.0*1:4.5) {$v'_1$};
\node (u'1) at (-45.0*1:5.5) {};
\node at (-45.0*1-11:4.5) {\small $3$};
\draw (1) -- (v'1);
\draw (v'1) -- (u'1);
\node at (-45.0*1:2.1) {\small $5$};
\node (u3) at (-45.0*3:4) {};
\draw (3) -- (u3);
\node at (-45.0*3:2.1) {\small $3$};
\node at (-45.0*4:2.1) {\small $4$};
\node at (-45.0*6:2.1) {\small $4$};
\node (u7) at (-45.0*7:4) {};
\draw (7) -- (u7);
\node at (-45.0*7:2.1) {\small $3$};

\node[vertex,xshift=-2.3cm,yshift=2.3cm] (v'4) at (-45.0*3:3) {$v'_4$};
\node[xshift=-2.3cm,yshift=2.3cm] (u'4) at (-45.0*3:4) {};
\node[xshift=-2.3cm,yshift=2.3cm] at (-45.0*3:2.1) {\small $2$};
\node[vertex,xshift=-2.3cm,yshift=2.3cm] (v''4) at (-45.0*4:3) {$v''_4$};
\node[xshift=-2.3cm,yshift=2.3cm] (u''4) at (-45.0*4:4) {};
\node[xshift=-2.3cm,yshift=2.3cm] at (-45.0*4:2.1) {\small $2$};
\node[vertex,xshift=-2.3cm,yshift=2.3cm] (v8) at (-45.0*5:3) {$v_8$};
\node[xshift=-2.3cm,yshift=2.3cm] (u8) at (-45.0*4:4) {};
\node[xshift=-2.3cm,yshift=2.3cm] at (-45.0*5:2.1) {\small $4$};
\node[vertex,xshift=-2.3cm,yshift=2.3cm] (v''6) at (-45.0*6:3) {$v''_6$};
\node[xshift=-2.3cm,yshift=2.3cm] (u''6) at (-45.0*6:4) {};
\node[xshift=-2.3cm,yshift=2.3cm] at (-45.0*6:2.1) {\small $2$};
\node[vertex,xshift=-2.3cm,yshift=2.3cm] (v'6) at (-45.0*7:3) {$v'_6$};
\node[xshift=-2.3cm,yshift=2.3cm] (u'6) at (-45.0*7:4) {};
\node[xshift=-2.3cm,yshift=2.3cm] at (-45.0*7:2.1) {\small $2$};

\draw (4) -- (v'4);
\draw (6) -- (v'6);
\draw (v'6) -- (v''6);
\draw (v'4) -- (v''4);
\draw (v8) -- (v''6);
\draw (v8) -- (v''4);
\draw (v'4) -- (u'4);
\draw (v''4) -- (u''4);
\draw (v''6) -- (u''6);
\draw (v'6) -- (u'6);
\end{tikzpicture}
}
}

\subfloat[]{
\label{subfig:colorable_1c1a0a1a0a1}
\scalebox{0.75}{
\begin{tikzpicture}[join=bevel,vertex/.style={circle,draw, minimum size=0.6cm},inner sep=0mm,scale=0.8]
\foreach \i in {0,...,7}  \node[vertex] (\i) at (-45.0*\i:3) {$v_\i$};
\foreach \i in {0,...,7}  \draw let \n1={int(mod(\i+1,8))} in (\i) -- (\n1);
\node at (-45.0*0:2.1) {\small $5$};
\node at (-45.0*2:2.1) {\small $5$};
\node at (-45.0*5:2.1) {\small $5$};
\node[vertex] (v'1) at (-45.0*1:4.5) {$v'_1$};
\node (u'1) at (-45.0*1:5.5) {};
\node at (-45.0*1-11:4.5) {\small $3$};
\draw (1) -- (v'1);
\draw (v'1) -- (u'1);
\node at (-45.0*1:2.1) {\small $5$};
\node (u3) at (-45.0*3:4) {};
\draw (3) -- (u3);
\node at (-45.0*3:2.1) {\small $3$};
\node at (-45.0*4:2.1) {\small $4$};
\node (u6) at (-45.0*6:4) {};
\draw (6) -- (u6);
\node at (-45.0*6:2.1) {\small $2$};
\node (u7) at (-45.0*7:4) {};
\draw (7) -- (u7);
\node at (-45.0*7:2.1) {\small $2$};
\node[vertex] (v'4) at (-45.0*4:4.5) {$v'_4$};
\node[vertex] (v''4) at (-45.0*4-7:6) {$v''_4$};
\node (v'''4) at (-45.0*4+7:6) {};
\node (u''4) at (-45.0*4-7:7) {};
\draw (4) -- (v'4);
\draw (v'4) -- (v''4);
\draw (v'4) -- (v'''4);
\draw (v''4) -- (u''4);
\node at (-45.0*4-11:4.5) {\small $2$};
\node at (-45.0*4-14:6) {\small $2$};
\end{tikzpicture}
}
}
\caption{\label{fig:colorable_1c1a0a1a0a} Reducible configurations in \Cref{lemma:colorable_1c1a0a1a0a}.}
\end{figure}
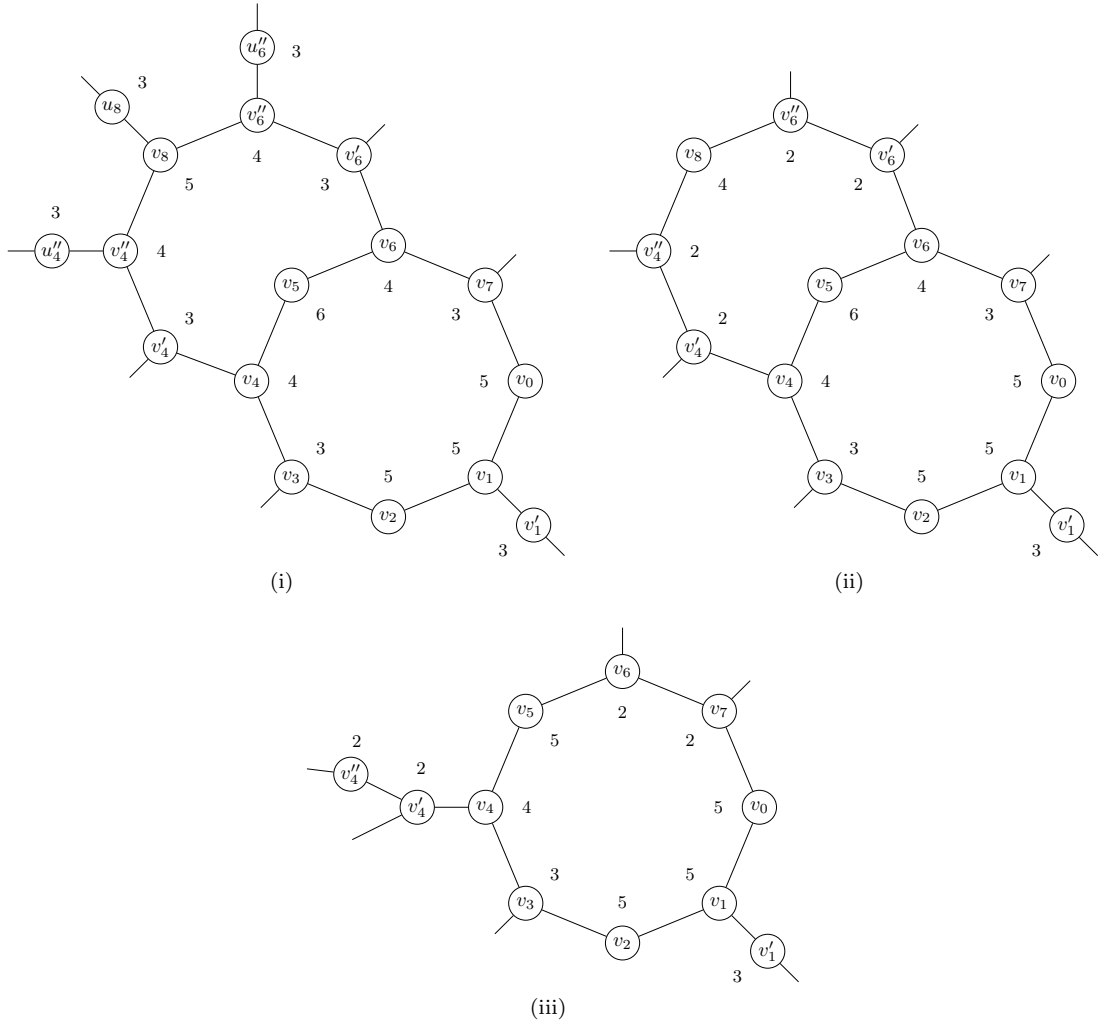

\begin{proof} The outline of each proof uses the same conventions as before.

\figureproof{\Cref{subfig:colorable_1c1a0a1a0a3}}
If $v'_1=u_8$, then $|L(v'_1)|=|L(v_8)|=|L(v_1)|=6$. Now, consider the two following cases:
\begin{itemize}
\item If there exists $x\in L(v_3)\cap L(v_7)$, then color $v_3$ and $v_7$ with $x$. Color $v''_6$ such that $u''_6$ still has 3 colors remaining, then $v'_6$ and $v_6$ in this order. Color $v_4$, $v'_4$, $v''_4$, and $u''_4$ by \Cref{subfig:config1}. Finish by coloring $v_8$, $u''_6$, $v'_1$ ($=u_8$), $v_0$, $v_2$, and $v_1$ in this order.
\item If $L(v_3)\cap L(v_7)=\emptyset$, then it suffices to show that we can color $v_3$, $v_4$, $v_5$, $v_6$, $v_7$, $v'_4$, $v''_4$, $u''_4$, $v_8$, $v'_6$, $v''_6$, and $u''_6$. 

Indeed, say they are colorable with $\phi$, then after coloring $v'_4$, $v''_4$, $u''_4$, $v_8$, $v'_6$, $v''_6$, and $u''_6$ with $\phi$, we obtain the configuration from \Cref{subfig:1c1a0c0a0c0a} where $v_3$, $v_4$, $v_5$, $v_6$, and $v_7$ are colorable (with $\phi$) but $L(v_3)\cap L(v_7)=\emptyset$, so the whole configuration can be colored.

It remains to show that there exists such a coloring $\phi$. Start by coloring $v''_4$ such that $u''_4$ still has 3 colors remaining. Similarly, color $v_6$ such that $v_7$ still has 3 colors remaining. Finish by coloring $v'_4$, $v_4$, $v_3$, $v_5$, $v'_6$, $v_7$, $v''_6$, $u''_6$, $v_8$, and $u''_4$ in this order.
\end{itemize}

Now, observe that $v'_1$ might see $u''_4$ and if it does, then they must be at distance exactly 2 since $G$ has no $2^+$-paths due to \Cref{lemma:no 2-path}. Symmetrically, the same holds if $v'_1$ sees $u''_6$. The following colorings will still work when $v'_1$ sees $u''_4$ or $u''_6$.

Consider the two following cases:
\begin{itemize}
\item If $|L(v_3)\cap L(v_7)|\geq 2$, say $\{d,e\}\subset L(v_3)\cap L(v_7)$, then let $x\in L(v_3)\setminus\{d,e\}$. We restrict $L(v'_4)$ to $L(v'_4)\setminus\{x\}$ and we color $v'_6$ differently from $\{d,e\}$. Color $v'_4$, $v''_4$, $u''_4$, $v_8$, $u_8$, $v''_6$, and $u''_6$ by \Cref{subfig:config8}.

Observe that we obtain the configuration from \Cref{subfig:1c1a0c0a0c0a} where  
$v_3$, $v_4$, $v_5$, $v_6$, and $v_7$ are colorable by \Cref{fig:forced_non_colorable_P5} since $L(v_3)$ and $L(v_7)$ will have at least one color in common. Moreover, we will have either $L(v_7)=\{d,e\}$ and $x\in L(v_3)\setminus\{d,e\}$, or $|L(v_7)|\geq 3$, both of which means that the remaining configuration is colorable by \Cref{lemma:1c1a0a1a0a}.

\item If $|L(v_3)\cap L(v_7)|\leq 1$, then it suffices to show that we can color $v_3$, $v_4$, $v_5$, $v_6$, $v_7$, $v'_4$, $v''_4$, $u''_4$, $v_8$, $u_8$, $v'_6$, $v''_6$, and $u''_6$. 

Indeed, say they are colorable with $\phi$, then after coloring $v'_4$, $v''_4$, $u''_4$, $v_8$, $u_8$, $v'_6$, $v''_6$, and $u''_6$ with $\phi$, we obtain the configuration from \Cref{subfig:1c1a0c0a0c0a} where $v_3$, $v_4$, $v_5$, $v_6$, and $v_7$ are colorable (with $\phi$) but $|L(v_3)\cap L(v_7)|\leq 1$, so the whole configuration can be colored.

It remains to show that there exists such a coloring $\phi$. Start by coloring $v''_4$ such that $u''_4$ still has 3 colors remaining. Similarly, color $v_6$ such that $v_7$ still has 3 colors remaining. Then, color $v'_6$. Color $u''_6$, $v''_6$, $v_8$, and $u_8$ by \Cref{subfig:config1}. Finish by coloring $v'_4$, $u''_4$, $v_4$, $v_3$, $v_5$, and $v_7$ in this order.
\end{itemize} 
 
\hfill\smallqed\medskip

\figureproof{\Cref{subfig:colorable_1c1a0a1a0a2}}
If $v'_1$ sees $v''_4$, then they must be at distance exactly 2 since $G$ has girth 8. Say $v_8$ is their common neighbor, then $v'_1$, $v_1$, $v_0$, $v_2$, $v_3$, $v_4$, $v_5$, $v'_4$, $v''_4$, and $v_8$ form the reducible configuration from \Cref{subfig:1c1a0c0a0c0a}. Symmetrically, the same holds if $v'_1$ sees $v''_6$.

So we have \gs.

We redefine $S=\{v_0,v_1,v'_1,v_2\}$ and let $\phi$ be the coloring of the rest of the graph. Now we uncolor the rest of the configuration and we have the corresponding list of colors as in \Cref{subfig:colorable_1c1a0a1a0a2}.

After coloring $v'_4$, $v''_4$, $v_8$, $v''_6$, and $v'_6$ with $\phi$, the remaining colors for $v_3$, $v_4$, $v_6$, $v_7$ must be the same two colors, say $\{d,e\}$ (determined by $L(v_1)\setminus L(v'_1)$), or the whole configuration would be colorable by \Cref{lemma:1c1a0a1a0a}. We can also deduce that $L(v_3)=\{d,e,\phi(v'_4)\}$. Similarly, $L(v_7)=\{d,e,\phi(v'_6)\}$. Now, thanks to \Cref{lemma:colorable_22422}, we know there exists another coloring $\phi'$ of $v'_4$, $v''_4$, $v_8$, $v''_6$, and $v'_6$ such that $\phi'(v'_4)\neq \phi(v'_4)$ or $\phi'(v'_6)\neq \phi(v'_6)$. Say w.l.o.g. that $\phi'(v'_4)\neq \phi(v'_4)$. As a result, $v_3$, $v_4$, $v_5$, $v_6$, and $v_7$ is colorable by \Cref{fig:forced_non_colorable_P5} and $L(v_3)\neq \{d,e\}$ so the configuration is colorable by \Cref{lemma:1c1a0a1a0a}.
\hfill\smallqed\medskip

\figureproof{\Cref{subfig:colorable_1c1a0a1a0a1}}
If $v'_1$ sees $v''_4$, then they must be at distance exactly 2 since $G$ has girth 8. Say $v_8$ is their common neighbor, then $v''_4$, $v_8$, $v'_1$, $v_1$, $v_2$, $v_3$, $v_4$, and $v'_4$ form the reducible configuration from \Cref{subfig:1a1a1a0a0a}.

Now, we have \gs.

We redefine $S=\{v_0,v_1,v'_1,v_2\}$ and let $\phi$ be the coloring of the rest of the graph. Now we uncolor $v_3$, $v_4$, $v'_4$, $v''_4$, $v_5$, $v_6$, and $v_7$ and we have the corresponding list of colors as in \Cref{subfig:colorable_1c1a0a1a0a1}.

Let $\{d,e\}\subseteq L(v_6)$. 

If $\{d,e\}\subseteq L(v_3)$, then we color $v'_4$ differently from $L(v_3)\setminus\{d,e\}$ and color $v''_4$. As a result, $v_3$, $v_4$, $v_5$, $v_6$, and $v_7$ are colorable by \Cref{fig:forced_non_colorable_P5} and $L(v_3)\neq\{d,e\}\subseteq L(v_6)$ so the configuration is colorable by \Cref{lemma:1c1a0a1a0a}.

If $\{d,e\}\not\subseteq L(v_3)$, then since $v_3$, $v_4$, $v'_4$, $v''_4$, $v_5$, $v_6$, and $v_7$ was colorable with $\phi$, we recolor $v'_4$ and $v''_4$ with $\phi(v'_4)$ and $\phi(v''_4)$ respectively. Now, observe that $v_3$, $v_4$, $v_5$, $v_6$, and $v_7$ are colorable but $L(v_3)\neq L(v_6)$ so the configuration is colorable by \Cref{lemma:1c1a0a1a0a}.
\hfill\smallqed\medskip

\end{proof}

\section{Discharging procedure}
\label{sec:discharging}

\paragraph{Charge distribution:} For a plane graph $G=(V,E,F)$, Euler formula $|V|-|E|+|F|=2$ can be rewritten as

\begin{equation}\label{euler}
\sum_{v\in V(G)}\left(\frac{7}{2}d(v)-9\right)+\sum_{f\in F(G)}(d(f)-9)=-18.
\end{equation}

We assign to each vertex $v$ the charge $\mu(v)=\frac72 d(v)-9$ and to each face $f$ the charge $\mu(f)=d(f)-9$. To prove the non-existence of $G$, we will redistribute the charges preserving their sum and obtaining a non-negative total charge, which will contradict \Cref{euler}.

To do so, we will divide the discharging procedure into two rounds. In the first round, we will redistribute the charges only between the vertices of $G$, resulting in a non-negative amount of charge on each vertex using the properties proved in \Cref{lemma:special_reducible,lemma:reducible}. For the second round, first observe that $\mu(f)=d(f)-9\geq 0$ for every face of size at least 9. Therefore, since $g(G)\geq 8$ and $\mu(f)=-1$ for every $8$-face, we will redistribute the remaining charges on each vertex over the non-reducible $8$-faces (every reducible cycle is shown in\Cref{lemma:reducible_cycles}) to obtain a non-negative amount of charge on faces. Thus, we will get a non-negative total of charge, which is a contradiction to \Cref{euler}. In our proof, we have to consider a large number of non-reducible $8$-faces. To handle this, we will provide a computer procedure that checks the remaining charge on each non-reducible $8$-face. In order to define this procedure, we will present an encoding of the $8$-faces, the reducible configurations, and the discharging rules.

\subsection{First round: vertices to vertices}
\label{subsec:discharging_vertices}

We define the following discharging rules on the vertices of $G$ :
\begin{itemize}
\item[\ru0] A 3-vertex gives 1 to a 2-neighbor.
\item[\ru1] A 3-vertex gives $\frac12$ to a (1,1,0)-neighbor.
\item[\ru2] A 3-vertex gives $\frac12$ to a (1,1,1)-vertex at distance 2.
\end{itemize}

We will now calculate the exact amount $\mu^*(v)$ of charges that $v$ ends up with after applying \ru0, \ru1 and \ru2.

\paragraph{If $d(v)=2$:}
Recall that the initial charge for $v$ is $\mu(v)=\frac72 d(v) - 9 = -2$. By \Cref{lemma:no 2-path}, $v$ can only have 3-neighbors. According to the discharging rules, $v$ receives 1 from each of its neighbor by \ru0 and does not give any charge away. Thus, $v$ ends up with $$\mu^*(v)=-2+2\cdot 1 = 0.$$

\paragraph{If $d(v)=3$:} Recall that the initial charge is $\mu(v) = \frac72 d(v) - 9 = \frac32$.
\begin{itemize}
\item If $v$ is a $(1,1,1)$-vertex.\\
Every neighbor of $v$ is a 2-vertex so only \ru0 and \ru2 may apply. However, due to \Cref{subfig:1c1a1}, there is no $(1,1,0^+)$-vertex at distance 2 from $v$. So, $v$ does not give away any charge to 3-vertices but only receive instead. Thus, by \ru0 and \ru2, we have 
$$ \mu^*(v)=\frac32 - 3\cdot 1 + 3\cdot \frac12 = 0.$$

\item If $v$ is a $(1,1,0)$-vertex.\\
Due to \Cref{subfig:1c1a1}, there is no $(1,1,1)$-vertex at distance 2 from $v$ so \ru2 does not apply. Due to \Cref{subfig:1c0c1}, $v$ cannot have a $(1,1,0)$-neighbor. So, $v$ does not give away any charge to 3-vertices but only receive by \ru1 instead. Thus, by \ru0 and \ru1, we have 
$$ \mu^*(v) = \frac32 - 2\cdot 1 + \frac12 = 0.$$

\item If $v$ is a $(1,0,0)$-vertex.\\
\begin{itemize}
\item If $v$ has a $(1,1,0)$-neighbor, $v$ cannot have another $(1,0^+,0)$-neighbor due to \Cref{subfig:1c0c0a1}. By \Cref{subfig:1a1a0c1}, $v$ cannot share a common 2-neighbor with a $(1,1,0^+)$-vertex at distance 2 so \ru2 does not apply. Hence, by \ru0 and \ru1, we have
$$ \mu^*(v) = \frac32 - 1 - \frac12 = 0.$$

\item If $v$ see a $(1,1,1)$-vertex at distance 2, $v$ can only see exactly one such vertex. By \Cref{subfig:1a1a0c1}, $v$ cannot have $(1,1,0)$-neighbor so \ru1 does not apply. Thus, by \ru0 and \ru2, we have
$$ \mu^*(v) = \frac32 - 1 - \frac12 = 0.$$

\item If $v$ does not have a $(1,1,0)$-neighbor and does not see a $(1,1,1)$-vertex at distance 2, then only \ru0 applies and we have
$$ \mu^*(v) = \frac32 - 1 = \frac12.$$
\end{itemize}

\item If $v$ is a $(0,0,0)$-vertex.\\
Observe that \ru0 and \ru2 cannot apply since $v$ does not have any 2-neighbor and cannot see a $(1,1,1)$-vertex at distance 2. So, only \ru1 can apply and by \Cref{subfig:1c0b0c1}, $v$ cannot have three $(1,1,0)$-neighbors. Consequently, 
\begin{itemize}
\item if $v$ has exactly two $(1,1,0)$-neighbors, then we have $$ \mu^*(v)=\frac32 - 2\cdot \frac12 = \frac12.$$
\item if $v$ has exactly one $(1,1,0)$-neighbor, then we have $$ \mu^*(v)=\frac32 - \frac12 = 1.$$
\item if $v$ has no $(1,1,0)$-neighbor, then we have $$ \mu^*(v)=\frac32.$$
\end{itemize}

\end{itemize}

Below, we recapitulate the remaining charges of each type of $3$-vertex $v$ (as 2-vertices are at 0) after applying \ru0, \ru1, and \ru2. In \Cref{1110,1100,10012,1000,00032,0001,00012}, the 2-vertices will be filled while the 3-vertices will not be. 

\begin{figure}[H]
\centering
\begin{minipage}[t]{.32\textwidth}
\centering
\begin{tikzpicture}
\begin{scope}[every node/.style={circle,thick,draw,minimum size=1pt,inner sep=2}]
    \node (c) at (-5.5,0) {$w_1$};
    \node (t) at (-4,0) {$v$};
    
    \node[fill] (t0) at (-4.75,0) {};
    
    \node[fill] (t1) at (-3.5,0.5) {};
    \node (t12) at (-2.75,0.5) {$w_2$};
    
    \node[fill] (t2) at (-3.5,-0.5) {};
    \node (t22) at (-2.75,-0.5) {$w_3$};
\end{scope}

\begin{scope}[every edge/.style={draw=black,thick}]
    \path (c) edge (t);
    
    \path (t) edge (t1);
    \path (t1) edge (t12);
    
    \path (t) edge (t2);
    \path (t2) edge (t22);
\end{scope}
\end{tikzpicture}
\caption{(1,1,1).\\ $w_1,w_2,w_3\neq (1,1,0^+)$.\\ $\mu^*(v)=0$.}
\label{1110}
\end{minipage}
\hfill
\begin{minipage}[t]{.32\textwidth}
\centering
\begin{tikzpicture}
\begin{scope}[every node/.style={circle,thick,draw,minimum size=1pt,inner sep=2}]
    \node (c) at (-4.75,0) {$u_1$};
    \node (t) at (-4,0) {$v$};
    
    \node[fill] (t1) at (-3.5,0.5) {};
    \node (t12) at (-2.75,0.5) {$w_2$};
    
    \node[fill] (t2) at (-3.5,-0.5) {};
    \node (t22) at (-2.75,-0.5) {$w_3$};
\end{scope}

\begin{scope}[every edge/.style={draw=black,thick}]
    \path (c) edge (t);
    
    \path (t) edge (t1);
    \path (t1) edge (t12);
    
    \path (t) edge (t2);
    \path (t2) edge (t22);
\end{scope}
\end{tikzpicture}
\caption{(1,1,0).\\ $u_1\neq (1,1,0)$ and $w_2,w_3\neq (1,1,1)$.\\ $\mu^*(v)=0$.}
\label{1100}
\end{minipage}
\hfill
\begin{minipage}[t]{.32\textwidth}
\centering
\begin{tikzpicture}
\begin{scope}[every node/.style={circle,thick,draw,minimum size=1pt,inner sep=2}]
    \node (c) at (-5.5,0) {$w_1$};
    \node (t) at (-4,0) {$v$};
    
    \node[fill] (t0) at (-4.75,0) {};
    
    \node (t1) at (-3.5,0.5) {$u_2$};
    
    \node (t2) at (-3.5,-0.5) {$u_3$};
\end{scope}

\begin{scope}[every edge/.style={draw=black,thick}]
    \path (c) edge (t);
    
    \path (t) edge (t1);
    
    \path (t) edge (t2);
\end{scope}
\end{tikzpicture}
\caption{(1,0,0).\\ $w_1\neq (1,1,1)$ and $u_2,u_3\neq (1,1,0)$.\\ $\mu^*(v)=\frac12$.}
\label{10012}
\end{minipage}
\end{figure}

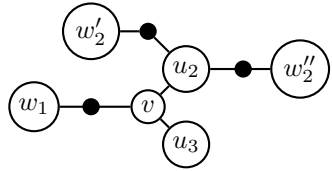
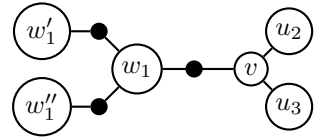
\begin{figure}[H]
\centering
\subfloat[$w_1\neq (1,1,0^+)$ and $u_3\neq (1,0^+,0)$.]{
\label{0002a}
\begin{tikzpicture}
\begin{scope}[every node/.style={circle,thick,draw,minimum size=1pt,inner sep=2}]
    \node[fill] (c) at (-4.75,0) {};
    \node (c1) at (-5.5,0) {$w_1$};
    \node (t) at (-4,0) {$v$};
    
    \node (t1) at (-3.5,0.5) {$u_2$};
    \node[fill] (t11) at (-2.75,0.5) {};
    \node (t12) at (-2,0.5) {$w''_2$};
    
    \node (t2) at (-3.5,-0.5) {$u_3$};
    
    \node[fill] (t3) at (-4,1) {};
    \node (t31) at (-4.75,1) {$w'_2$};
\end{scope}

\begin{scope}[every edge/.style={draw=black,thick}]
    \path (c1) edge (t);
    
    \path (t) edge (t1);
    \path (t1) edge (t12);
    
    \path (t) edge (t2);

    \path (t1) edge (t3);
    \path (t3) edge (t31);
\end{scope}
\end{tikzpicture}
}
\hspace*{0.4cm}
\subfloat[$u_2,u_3 \neq (1,1,0)$.]{
\label{0002b}
\begin{tikzpicture}
\begin{scope}[every node/.style={circle,thick,draw,minimum size=1pt,inner sep=2}]
    \node[fill] (c) at (-4.75,0) {};
    \node (c1) at (-5.5,0) {$w_1$};
    \node (t) at (-4,0) {$v$};
    
    \node (t1) at (-3.5,0.5) {$u_2$};
    \node (t2) at (-3.5,-0.5) {$u_3$};
    
    \node[fill] (c21) at (-6,0.5) {};
    \node (c22) at (-6.75,0.5) {$w'_1$};
    
    \node[fill] (c31) at (-6,-0.5) {};
    \node (c32) at (-6.75,-0.5) {$w''_1$};
\end{scope}

\begin{scope}[every edge/.style={draw=black,thick}]
    \path (c1) edge (t);    
    \path (t) edge (t1);
    \path (t) edge (t2);
	\path (c1) edge (c21);
	\path (c21) edge (c22);
	\path (c1) edge (c31);
	\path (c31) edge (c32);    
\end{scope}
\end{tikzpicture}
}
\caption{(1,0,0).\\ $\mu^*(v)=0$.}
\label{1000}
\end{figure}

\begin{figure}[H]
\begin{minipage}[b]{0.32\textwidth}
\centering
\begin{tikzpicture}
\begin{scope}[every node/.style={circle,thick,draw,minimum size=1pt,inner sep=2}]
    \node (c) at (-4.75,0) {$u_1$};
    \node (t) at (-4,0) {$v$};
    
    \node (t1) at (-3.5,0.5) {$u_2$};
    \node (t2) at (-3.5,-0.5) {$u_3$};
\end{scope}

\begin{scope}[every edge/.style={draw=black,thick}]
    \path (c) edge (t);
    \path (t) edge (t1);
    \path (t) edge (t2);
\end{scope}
\end{tikzpicture}
\caption{(0,0,0).\\ $u_1,u_2,u_3\neq (1,1,0)$.\\ $\mu^*(v)=\frac32$.}
\label{00032}
\end{minipage}
\begin{minipage}[b]{0.32\textwidth}
\centering
\begin{tikzpicture}
\begin{scope}[every node/.style={circle,thick,draw,minimum size=1pt,inner sep=2}]
    \node(c) at (-4.75,0) {$u_1$};
    \node (t) at (-4,0) {$v$};
    
    \node (t1) at (-3.5,0.5) {$u_2$};
    \node[fill] (t11) at (-2.75,0.5) {};
    \node (t12) at (-2,0.5) {$w''_2$};
    
    \node (t2) at (-3.5,-0.5) {$u_3$};
    
    \node[fill] (t3) at (-4,1) {};
    \node (t31) at (-4.75,1) {$w'_2$};
\end{scope}

\begin{scope}[every edge/.style={draw=black,thick}]
    \path (c) edge (t);
    
    \path (t) edge (t1);
    \path (t1) edge (t12);
    
    \path (t) edge (t2);

    \path (t1) edge (t3);
    \path (t3) edge (t31);
\end{scope}
\end{tikzpicture}

\caption{(0,0,0).\\ $u_1,u_3\neq (1,1,0)$.\\$\mu^*(v)=1$.}
\label{0001}
\end{minipage}
\begin{minipage}[b]{0.32\textwidth}
\centering
\begin{tikzpicture}
\begin{scope}[every node/.style={circle,thick,draw,minimum size=1pt,inner sep=2}]
    \node(c) at (-4.75,0) {$u_1$};
    \node (t) at (-4,0) {$v$};
    
    \node (t1) at (-3.5,0.5) {$u_2$};
    \node[fill] (t11) at (-2.75,0.5) {};
    \node (t12) at (-2,0.5) {$w''_2$};
    
    \node (t2) at (-3.5,-0.5) {$u_3$};
    
    \node[fill] (t3) at (-4,1) {};
    \node (t31) at (-4.75,1) {$w'_2$};
    
    \node[fill] (t41) at (-2.75,-0.5) {};
    \node (t42) at (-2,-0.5) {$w''_3$};
    
    \node[fill] (t51) at (-4,-1) {};
    \node (t52) at (-4.75,-1) {$w'_2$};
\end{scope}

\begin{scope}[every edge/.style={draw=black,thick}]
    \path (c) edge (t);
    
    \path (t) edge (t1);
    \path (t1) edge (t12);
    
    \path (t) edge (t2);
   
    \path (t1) edge (t3);
    \path (t3) edge (t31);
    
    \path (t2) edge (t42);
    \path (t2) edge (t51);
    \path (t51) edge (t52);
\end{scope}
\end{tikzpicture}
\caption{(0,0,0).\\ $u_1\neq (1,1,0)$.\\$\mu^*(v)=\frac12$.}
\label{00012}
\end{minipage}
\end{figure}

\begin{table}[!h]
\centering
\begin{tabular}{|c||c|c|c|c|c|}
 \hline
 \backslashbox{$v$ \kern-0.5em}{\kern-0.5em $\mu^*(v)$}& \Large $\frac32$ & \Large 1 & \Large $\frac12$ & \Large 0 \\
 \hline \hline
 (1,1,1) & \slashbox{}{} & \slashbox{}{} & \slashbox{}{} & \cref{1110} \\
 \hline
 (1,1,0) & \slashbox{}{} & \slashbox{}{} & \slashbox{}{} & \cref{1100} \\
 \hline
 (1,0,0) & \slashbox{}{} & \slashbox{}{} & \cref{10012} & \cref{1000} \\
 \hline
 (0,0,0) & \cref{00032} & \cref{0001} & \cref{00012} & \slashbox{}{}  \\
  \hline
\end{tabular}
\caption{Available amount of charges for each type of 3-vertex after applying \ru0-\ru2}
\label{tab:recap}
\end{table}

\subsection{Second round: vertices to faces}
\label{subsec:second_round_discharging}

Recall that $\mu^*(v)$ is the remaining charges of $v$ after applying rules \ru0-\ru2. We define the following discharging rules between the vertices and $8$-faces of $G$:
\begin{itemize}
\item[\ru3] If a 3-vertex $v$ is not a $(1,0,0)$-vertex, then it gives $\frac{\mu^*(v)}{n_1}$ to each incident $8$-face, where $n_1$ is the number of incident $8$-faces.
\item[\ru4] For a $(1,0,0)$-vertex $v$, let $n_2$ be the number of $8$-faces incident to $v$ and to its $2$-neighbor. Vertex $v$ gives $\frac{\mu^*(v)}{n_2}$ to each of these $n_2$ $8$-faces. 
\end{itemize}

Recall that, given a face $f$, the initial amount of charge $\mu(f) = d(f)-9$ so all $k$-faces with $k\geq 9$ have a positive charge. Moreover, after applying \ru3-\ru4, every $3$-vertex $v$ will have a remaining charge of at least $\mu^*(v)-n_i\cdot\frac{\mu^*(v)}{n_i}=0$ for $1\leq i\leq 2$.

As a result, it remains to verify that every $8$-face $f$ will receive at least charge 1 so that its final charge will be $\mu^*(f)\geq \mu(f)-9+1 = 8-8=0$.

To generate every possible 8-face efficiently, we introduce the following encoding of a configuration around an 8-face. 

\textbf{Encoding a face $f$:}
\begin{itemize}
\item For every pair of consecutive $3$-vertices in clockwise order, count the number of $2$-vertices in between. We obtain a circular sequence of integers in clockwise order of length equal to the number of $3$-vertices of $f$. Since $G$ has no $2^+$-paths by \Cref{lemma:no 2-path}, each integer is in $\{0,1\}$. Observe that there are at most as many ways to write this sequence of integers as the number of $3$-vertices of $f$. Indeed, we can choose any $3$-vertex $v$ as a starting point and start counting the number of $2$-vertices between $v$ and the next $3$-vertex in clockwise order. We choose as representative the first one in the lexicographic order where \texttt{1} precedes \texttt{0} and call it the \textbf{\textit{number-word}} of $f$.

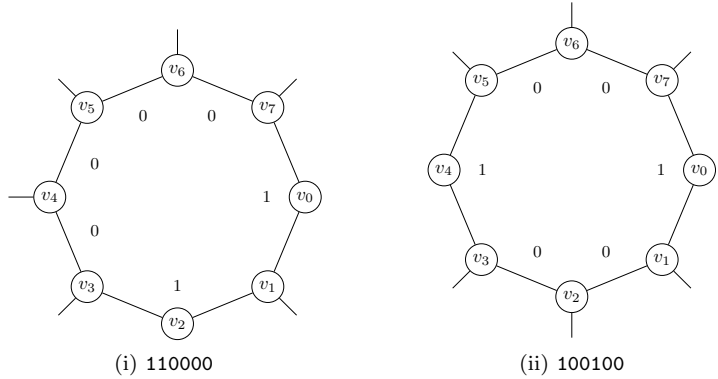
\begin{figure}[H]
\centering
\subfloat[\texttt{110000}]{\label{subfig:number_word1}
\scalebox{0.7}{
\begin{tikzpicture}[join=bevel,vertex/.style={circle,draw, minimum size=0.6cm},inner sep=0mm,scale=0.8]
\foreach \i in {0,...,7}  \node[vertex] (\i) at (-45.0*\i:3) {$v_\i$};
\foreach \i in {0,...,7}  \draw let \n1={int(mod(\i+1,8))} in (\i) -- (\n1);
\node (u1) at (-45.0*1:4) {};
\draw (1) -- (u1);
\node (u3) at (-45.0*3:4) {};
\draw (3) -- (u3);
\node (u4) at (-45.0*4:4) {};
\draw (4) -- (u4);
\node (u5) at (-45.0*5:4) {};
\draw (5) -- (u5);
\node (u6) at (-45.0*6:4) {};
\draw (6) -- (u6);
\node (u7) at (-45.0*7:4) {};
\draw (7) -- (u7);

\node at (-45.0*0:2.1) {\small $1$};
\node at (-45.0*2:2.1) {\small $1$};
\node at (-45.0*3.5:2.1) {\small $0$};
\node at (-45.0*4.5:2.1) {\small $0$};
\node at (-45.0*5.5:2.1) {\small $0$};
\node at (-45.0*6.5:2.1) {\small $0$};

\end{tikzpicture}
}
}
\qquad
\subfloat[\texttt{100100}]{\label{subfig:number_word2}
\scalebox{0.7}{
\begin{tikzpicture}[join=bevel,vertex/.style={circle,draw, minimum size=0.6cm},inner sep=0mm,scale=0.8]
\foreach \i in {0,...,7}  \node[vertex] (\i) at (-45.0*\i:3) {$v_\i$};
\foreach \i in {0,...,7}  \draw let \n1={int(mod(\i+1,8))} in (\i) -- (\n1);
\node (u1) at (-45.0*1:4) {};
\draw (1) -- (u1);
\node (u2) at (-45.0*2:4) {};
\draw (2) -- (u2);
\node (u3) at (-45.0*3:4) {};
\draw (3) -- (u3);
\node (u'5) at (-45.0*5:5.5) {};
\draw (5) -- (u5);
\node (u6) at (-45.0*6:4) {};
\draw (6) -- (u6);
\node (u'7) at (-45.0*7:5.5) {};
\draw (7) -- (u7);
\node at (-45.0*0:2.1) {\small $1$};
\node at (-45.0*1.5:2.1) {\small $0$};
\node at (-45.0*2.5:2.1) {\small $0$};
\node at (-45.0*4:2.1) {\small $1$};
\node at (-45.0*5.5:2.1) {\small $0$};
\node at (-45.0*6.5:2.1) {\small $0$};
\end{tikzpicture}
}
}
\caption{Examples of number-words on $8$-faces.}
\end{figure}

Examples:
\begin{itemize}
\item Take the $8$-face in \Cref{subfig:number_word1} as an example. We consider the $3$-vertices in clockwise order starting at any $3$-vertex, say $v_1$. We get $v_1$, $v_3$, $v_4$, $v_5$, $v_6$, $v_7$. Now, we count the number of $2$-vertices between two consecutives vertices in that sequence. More precisely, there is one $2$-vertex ($v_2$) in between $v_1$ and $v_3$, then none between $v_3$ and $v_4$, and so on. This gives us the sequence of numbers \texttt{100001}. Had we chosen another starting $3$-vertex (say $v_3$) we would have obtained another sequence (\texttt{000011}). Among all of these different sequences, we choose the one that comes first in the lexicographic order where 1 comes before 0. And that sequence is \texttt{110000}, the number-word of $f$, which corresponds to the starting $3$-vertex $v_7$.
\item We can do the same with the $8$-face in \Cref{subfig:number_word2}. The number-word for $f$ is \texttt{100100}. Observe that this sequence can be obtained by taking, in clockwise order, either $v_7$ or $v_3$ as a starting point.
\end{itemize}

\item Due to our discharging rules, we are interested in configurations around $3$-vertices. So, given a $3$-vertex $v$ on $f$, we choose the following letters to encode the neighborhood outside $f$ of $v$:
\begin{itemize}
\item \texttt{c} means that $v$ has a $2$-neighbor outside $f$.
\item \texttt{b} means that $v$ has a $(1,1,0)$-neighbor outside $f$.
\item \texttt{a} represents the rest of the possible neighbors of $v$. In other words, the neighbor of $v$ outside $f$ is a $3$-vertex that is not a $(1,1,0)$-vertex. 
\end{itemize}

Observe that there may be multiple starting $3$-vertices that give the same number-word for $f$. Given one possible starting $3$-vertex of the number-word $nw$, we insert between each pair of consecutive integers of $nw$ the letter encoding of the neighborhood outside $f$ of the corresponding $3$-vertex. We obtain an alternating sequence $fw$ of integers and letters for each starting $3$-vertex.

Among the possible alternating sequences $fw$s, we choose the one where the subsequence of letters is the smallest in alphabetical order. We call this alternating sequence the \textbf{\textit{full-word}} of $f$ and the corresponding subsequence of letters the \textbf{\textit{letter-word}} of $f$.

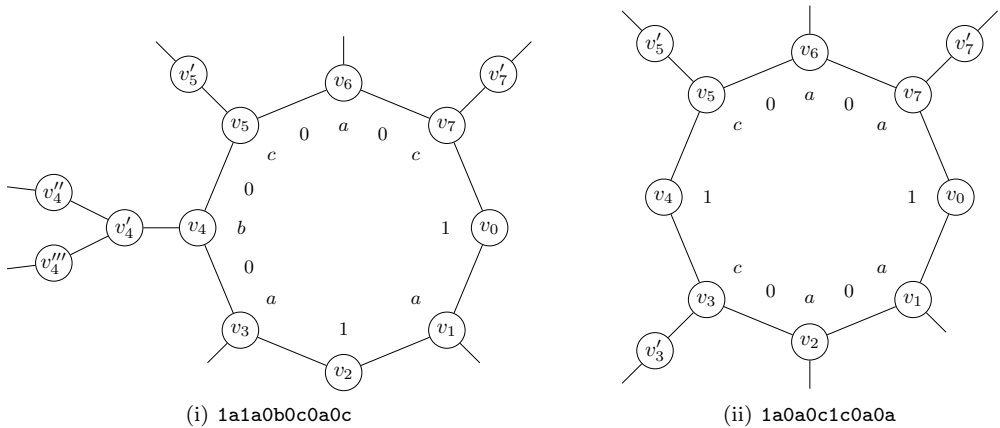
\begin{figure}[H]
\centering
\subfloat[\texttt{1a1a0b0c0a0c}]{\label{subfig:encoding1}
\scalebox{0.8}{
\begin{tikzpicture}[join=bevel,vertex/.style={circle,draw, minimum size=0.6cm},inner sep=0mm,scale=0.8]
\foreach \i in {0,...,7}  \node[vertex] (\i) at (-45.0*\i:3) {$v_\i$};
\foreach \i in {0,...,7}  \draw let \n1={int(mod(\i+1,8))} in (\i) -- (\n1);
\node (u1) at (-45.0*1:4) {};
\draw (1) -- (u1);
\node (u3) at (-45.0*3:4) {};
\draw (3) -- (u3);
\node[vertex] (v'4) at (-45.0*4:4.5) {$v'_4$};
\node[vertex] (v''4) at (-45.0*4-7:6) {$v''_4$};
\node[vertex] (v'''4) at (-45.0*4+7:6) {$v'''_4$};
\node (u''4) at (-45.0*4-7:7) {};
\node (u'''4) at (-45.0*4+7:7) {};
\node at (-45.0*4-11:4.5) {};
\node at (-45.0*4-14:6) {};
\node at (-45.0*4+14:6) {};
\draw (4) -- (v'4);
\draw (v'4) -- (v''4);
\draw (v'4) -- (v'''4);
\draw (v''4) -- (u''4);
\draw (v'''4) -- (u'''4);
\node[vertex] (v'5) at (-45.0*5:4.5) {$v'_5$};
\node (u'5) at (-45.0*5:5.5) {};
\node at (-45.0*5-11:4.5) {};
\draw (5) -- (v'5);
\draw (v'5) -- (u'5);
\node (u6) at (-45.0*6:4) {};
\draw (6) -- (u6);
\node[vertex] (v'7) at (-45.0*7:4.5) {$v'_7$};
\node (u'7) at (-45.0*7:5.5) {};
\node at (-45.0*7-11:4.5) {};
\draw (7) -- (v'7);
\draw (v'7) -- (u'7);

\node at (-45.0*0:2.1) {\small $1$};
\node at (-45.0*2:2.1) {\small $1$};
\node at (-45.0*3.5:2.1) {\small $0$};
\node at (-45.0*4.5:2.1) {\small $0$};
\node at (-45.0*5.5:2.1) {\small $0$};
\node at (-45.0*6.5:2.1) {\small $0$};
\node at (-45.0*1:2.1) {\small $a$};
\node at (-45.0*3:2.1) {\small $a$};
\node at (-45.0*4:2.1) {\small $b$};
\node at (-45.0*5:2.1) {\small $c$};
\node at (-45.0*6:2.1) {\small $a$};
\node at (-45.0*7:2.1) {\small $c$};

\end{tikzpicture}
}
}
\qquad
\subfloat[\texttt{1a0a0c1c0a0a}]{\label{subfig:encoding2}
\scalebox{0.8}{
\begin{tikzpicture}[join=bevel,vertex/.style={circle,draw, minimum size=0.6cm},inner sep=0mm,scale=0.8]
\foreach \i in {0,...,7}  \node[vertex] (\i) at (-45.0*\i:3) {$v_\i$};
\foreach \i in {0,...,7}  \draw let \n1={int(mod(\i+1,8))} in (\i) -- (\n1);
\node (u1) at (-45.0*1:4) {};
\draw (1) -- (u1);
\node (u2) at (-45.0*2:4) {};
\draw (2) -- (u2);
\node[vertex] (v'3) at (-45.0*3:4.5) {$v'_3$};
\node (u'3) at (-45.0*3:5.5) {};
\draw (3) -- (v'3);
\draw (v'3) -- (u'3);
\node[vertex] (v'5) at (-45.0*5:4.5) {$v'_5$};
\node (u'5) at (-45.0*5:5.5) {};
\draw (5) -- (v'5);
\draw (v'5) -- (u'5);
\node (u6) at (-45.0*6:4) {};
\draw (6) -- (u6);
\node[vertex] (v'7) at (-45.0*7:4.5) {$v'_7$};
\node (u'7) at (-45.0*7:5.5) {};
\draw (7) -- (v'7);
\draw (v'7) -- (u'7);
\node at (-45.0*0:2.1) {\small $1$};
\node at (-45.0*1:2.1) {\small $a$};
\node at (-45.0*1.5:2.1) {\small $0$};
\node at (-45.0*2:2.1) {\small $a$};
\node at (-45.0*2.5:2.1) {\small $0$};
\node at (-45.0*3:2.1) {\small $c$};
\node at (-45.0*4:2.1) {\small $1$};
\node at (-45.0*5:2.1) {\small $c$};
\node at (-45.0*5.5:2.1) {\small $0$};
\node at (-45.0*6:2.1) {\small $a$};
\node at (-45.0*6.5:2.1) {\small $0$};
\node at (-45.0*7:2.1) {\small $a$};
\end{tikzpicture}
}
}
\caption{Examples of full-words on $8$-faces.}
\end{figure}

Examples:
\begin{itemize}
\item Take the $8$-face $f$ in \Cref{subfig:encoding1} as an example. It is the same face as in \Cref{subfig:number_word1}, this time with more information about the neighborhood of the $3$-vertices outside of $f$. Observe that when we do not have extra information about the neighborhood of a $3$-vertex outside of $f$ (it could be \texttt{a}, \texttt{b}, or \texttt{c}), we will denote it \texttt{a} for now and explain it later on. We consider the neighborhood of each $3$-vertex, starting with the one that comes right after the first number, which is the $3$-vertex $v_1$. In order, they corresponds to the letters \texttt{a}, \texttt{a}, \texttt{b}, \texttt{c}, \texttt{a}, \texttt{c}, which give us the letter-word \texttt{aabcac}. Finally, we combine these the number-word and the letter-word into the full-word \texttt{1a1a0b0c0a0c}. 
\item We can do the same with the $8$-face in \Cref{subfig:encoding2}, which is the face in \Cref{subfig:number_word2} with extra information. When we choose the letter-word for $f$, we need to consider two encodings, one that starts with the $3$-vertex that comes right after $v_7$ in clockwise order, namely $v_1$, or the one after $v_3$, namely $v_5$. These give us two sequence of letters \texttt{aaccaa} and \texttt{caaaac} respectively. For our letter-word, we choose the first one in alphabetical order, which is \texttt{aaccaa}. Finally, we get the full-word \texttt{1a0a0c1c0a0a}.
\end{itemize}
\end{itemize}

\begin{observation}
Each face has a unique encoding full-word and each full-word uniquely defines a face.
\end{observation}

Under each $8$-cycle of \Cref{fig:first_reducible_cycles,fig:reducible_cycles,subfig:1c1a0a1a0a}, you have the corresponding encoding of the reducible configuration if it were an $8$-face.

\bigskip

In what follows we explain the generation of all possible $8$-faces, how to check which ones are reducible and which ones will obtain enough charge from its incident $3$-vertices by \ru3 and \ru4. The corresponding pseucode is summarized in \Cref{algorithm}.

\begin{algorithm}[H]
\SetAlgoLined
\KwData{\texttt{forbidden\_subwords}, \texttt{dictionary\_of\_charges}, \texttt{number\_words}, \texttt{alphabet}, \texttt{target\_charge}.}
\KwResult{The list of full-words that are not forbidden nor dischargeable.}
 \ForEach{\texttt{number\_word} $\in$ \texttt{number\_words}}{
  \texttt{n} = length of \texttt{number\_word}\;
  \texttt{letter\_words} = set of words of size \texttt{n} in \texttt{alphabet}\;
  \ForEach{\texttt{letter\_word} $\in$ \texttt{letter\_words}}{
	  build \texttt{full\_word} from \texttt{number\_word} and \texttt{letter\_word}\;
	  \If{\texttt{full\_word} does not contain a subword in \texttt{forbidden\_subwords}}{
		Compute the \texttt{charge} of \texttt{full\_word} using \texttt{dictionary\_of\_charges}\; 	  
	  \If{\texttt{charge} $<$ \texttt{target\_charge}}{
   		Write \texttt{full\_word} to output\;
   	  }
   	  }
  }
 }
\caption{\label{algorithm}Filtering forbidden and dischargeable full-words corresponding to faces with a given size.}
\end{algorithm}

Since $G$ has no $2^+$-paths and $f$ has length 8, there can be at most four $2$-vertices on $f$. On the other hand, given a number-word $nw$ of $f$, the number of $2$-vertices of $f$ is given by the number of \texttt{1}s in $nw$. Therefore, one can easily check the following observation: 

\begin{observation}
The only possible number-words for $8$-faces in $G$ are \texttt{1111}, \texttt{11100}, \texttt{11010}, \texttt{110000}, \texttt{101000}, \texttt{100100}, \texttt{1000000}, and \texttt{00000000}. 
\end{observation}

Since the process of generating these number-words is done naively and it is not the main focus of the algorithm, we will not go into technical details. However, the script is available at \url{https://gite.lirmm.fr/discharging/planar-graphs}. For this case, the set of number-words is small enough that it can even be checked manually.

Now, for each number-word $nw$, we can generate all possible sequences of letters in $\{a,b,c\}$ with the same length as $nw$ that we will then interlace with $nw$ to create an alternating sequence corresponding to a full-word (line 5 of \Cref{algorithm}). Observe that during this process of generation, we may obtain several words representing the same face and only one of them is the unique full-word encoding $f$. This has no influence on the correctness of our algorithm, only the time complexity, as some faces might be checked multiple times. Here, it is possible to identify the symmetries in the generated words in order to keep the unique full-words. However, in practice, at least for our case, this subroutine adds complications with minimal time gain.

\bigskip

The list of full-words described above corresponds to all possible neighborhoods at distance at most $2$ of an $8$-face. We filter out every neighborhood that either contains a reducible configuration of \Cref{lemma:first_reducible_cycles,lemma:reducible,lemma:reducible_cycles,lemma:special_reducible} (line 6 of \Cref{algorithm}), or has enough charge available for its $8$-face by \ru3 and \ru4 (line 8 of \Cref{algorithm}).

\medskip

In order to check that the corresponding subgraph of a full-word contains a reducible configuration, we encode the latter using similar conventions as for the neighborhood of the $8$-faces. Indeed, the considered configuration is encoded as seen from an incident face. Thus, one configuration may have multiple different encodings (depending on the incident faces) and we call these encodings \textbf{\textit{forbidden subwords}}. A full-word that contains a forbidden subword is \textbf{\textit{forbidden}}. 

Since we always consider the worst case scenario, if a forbidden subword contains a letter \texttt{a}, then one can always build two other (``weaker'') forbidden subwords by replacing this \texttt{a} by \texttt{b} or \texttt{c}. Therefore, whenever we consider a forbidden subword containing \texttt{a}, we also implicitly consider the other ``weaker'' subwords. See \Cref{fig:first_reducible_cycles,fig:reducible_cycles,fig:reducible,fig:special_reducible} where the captions contain all possible forbidden (``strong'') subwords of each reducible configuration. In a general case, one can define a different symbol (another letter, say $d$ for example) that can be rewritten as multiple different letters (here \texttt{a}, \texttt{b}, and \texttt{c}). Our choice was \texttt{a} for simplicity.

In the code implementation of \Cref{algorithm}, we define a forbidden subword as a regular expression and rewriting rule (formal grammar) in which \texttt{a} can be rewritten as \texttt{b} or \texttt{c}.

\begin{observation}
In a forbidden subword, \texttt{a} can mean \texttt{a}, \texttt{b}, or \texttt{c} in a real encoding.
\end{observation} 

Now, recall that a full-word is actually circular and is read in clockwise order. Thus, in order to check whether it is forbidden, one has to check if it contains a forbidden subword or its mirror. Once we removed the forbidden subword, we are ready to move to the next step of the algorithm.

\bigskip

The next step (lines 7-8 of \Cref{algorithm}) is to check, for every full-word $fw$, whether the $3$-vertices of the corresponding subgraph give enough charge to $f$ according to \ru3 and \ru4 (at least a total charge 1). If it is the case, we say that $fw$ is \textbf{\textit{dischargeable}}. Similar to the encoding of the reducible configurations, we can also encode into a dictionary the configurations from \Cref{1110,1100,10012,1000,00032,0001,00012}. The encoding of each entry of the dictionary corresponds to a possible neighborhood of a $3$-vertex, along with $\frac{\mu^*(v)}{3}$ for the worst case scenario in \ru3 (\Cref{1110,1100,00032,0001,00012}) and $\frac{\mu^*(v)}{2}$ for \ru4 (\Cref{10012,1000}). To work with integers, we multiply by 12 the charge of each vertex and each face of $G$. In \Cref{table:dictionary}, we detail the dictionary entries for each configuration.

\begin{table}[H]
\centering
\begin{tabular}{|c|c|c|c|c|c|c|c|}
\hline
\cref{1110} & \cref{1100} & \cref{10012} & \cref{1000}i & \cref{1000}ii & \cref{00032} & \cref{0001} & \cref{00012} \\
\hline
\texttt{1c1} : 0 & \texttt{1a1} : 0 & \texttt{1a0} : 3 & \texttt{1b0} : 0 & \texttt{0a1c1} : 0 & \texttt{0a0} : 6 & \texttt{0b0} : 4 & \texttt{0b0c1} : 2 \\
& \texttt{1c0} : 0 & \texttt{0c0} : 0 & \texttt{1a0c1} : 0 & & & \texttt{0a0c1} : 4 & \texttt{1c0a0c1} : 2\\
\hline
\end{tabular}
\caption{\label{table:dictionary}The dictionary of charges. Each entry is written as ``\texttt{<encoding>} : <charge>''. \\Every value was multiplied by 12 to get an integer.}
\end{table}

Observe that, in our case, every encoding in a dictionary entry starts and ends with a number. Thus, we have the following observation.

\begin{observation}
The encoding in a dictionary entry always has odd length.
\end{observation}

As a consequence, the $3$-vertex $v$ that holds the charge in the encoding of a dictionary entry corresponds to either 
\begin{itemize}
\item the letter in the middle when it has length $3$ or $7$,
\item or the letter in second position when it has length $5$. 
\end{itemize}

Once again, each encoding can be read from left to right or right to left. Note that one has to be mindful of the position of $v$ when reading an encoding of length $5$ from right to left.
 
In order to count the total amount of charge that an $8$-face will receive from its $3$-vertices, the algorithm consists of sliding a window of odd length across the circular full-word. We start with the window of the largest possible length ($7$ according to our dictionary) in order to have the most information about the neighborhood of $v$. At each step, it searches for the corresponding encoding (or its mirror) in the dictionary and if it exists, it marks the position as ``discharged'' and adds the corresponding amount of charge to its total amount. For a given window size, if the corresponding subword is not in the dictionary, then it means that the dictionary entry corresponding to $v$ must have an encoding of smaller length (recall that the dictionary entries are exhaustive). Then, it suffices to verify that the total amount is at least 12 (\texttt{target\_charge}) since we multiplied every charge by 12. In such a case, we know that our $8$-face will end up with a non-negative amount of charge. 

\subsection{Third round: faces to faces}

We ran \Cref{algorithm} to compute the outcome of the second round of discharging. The only remaining type of face which was output by the algorithm (full-word: \texttt{1c1a0a1a0a}) corresponds to the face $f$ in \Cref{fig:ru5}. We define another discharging rule \ru5 to take care of this last case.

\begin{itemize}
\item[\ru5] Let $f$ and $f'$ be as depicted in \Cref{fig:ru5}. If $f'$ is an $8$-face, then $f'$ gives $\frac12$ to $f$.
\end{itemize}

\begin{figure}[H]
\centering
\scalebox{0.65}{%
\begin{tikzpicture}[join=bevel,vertex/.style={circle,draw, minimum size=0.6cm},inner sep=0mm,scale=0.8]
\foreach \i in {0,...,7}  \node[vertex] (\i) at (-45.0*\i:3) {$v_\i$};
\foreach \i in {0,...,7}  \draw let \n1={int(mod(\i+1,8))} in (\i) -- (\n1);
\node[vertex] (v'1) at (-45.0*1:4.5) {$v'_1$};
\node (u'1) at (-45.0*1:5.5) {};
\draw (1) -- (v'1);
\draw (v'1) -- (u'1);
\node[vertex] (v'3) at (-45.0*3:4.5) {$v'_3$};
\draw (3) -- (v'3);
\node (u'3) at (-45.0*3-7:5.5) {};
\node (u''3) at (-45.0*3+7:5.5) {};
\draw (v'3) -- (u'3);
\draw (v'3) -- (u''3);
\node[vertex] (v'4) at (-45.0*4:4.5) {$v'_4$};
\draw (4) -- (v'4);
\node (u'4) at (-45.0*4-7:5.5) {};
\node (u''4) at (-45.0*4+7:5.5) {};
\draw (v'4) -- (u'4);
\draw (v'4) -- (u''4);
\node[vertex] (v'6) at (-45.0*6:4.5) {$v'_6$};
\draw (6) -- (v'6);
\node (u'6) at (-45.0*6-7:5.5) {};
\node (u''6) at (-45.0*6+7:5.5) {};
\draw (v'6) -- (u'6);
\draw (v'6) -- (u''6);
\node[vertex] (v'7) at (-45.0*7:4.5) {$v'_7$};
\draw (7) -- (v'7);
\node (u'7) at (-45.0*7-7:5.5) {};
\node (u''7) at (-45.0*7+7:5.5) {};
\draw (v'7) -- (u'7);
\draw (v'7) -- (u''7);
\node[draw=none] (f) at (0,0) {\Large $f$};
\node[draw=none] (f') at (-3,3) {\Large $f'$};
\end{tikzpicture}
}
\caption{\label{fig:ru5}$v'_3,v'_4,v'_6,v'_7\neq (1,1,0)$.}
\end{figure}
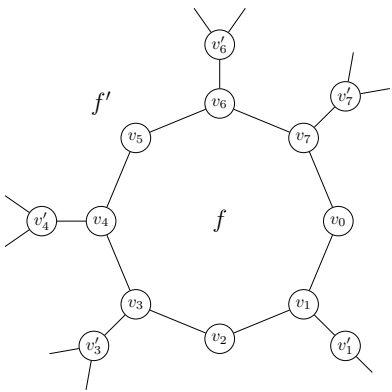

We show that after applying \ru5, we get $\mu^*(f)\geq 0$ and $\mu^*(f')\geq 0$. Recall that $8$-faces have starting charge $-1$.

First of all, by \Cref{10012} and \ru4, if $f'$ is not an $8$-face, then $v_4$ and $v_6$ each give $\frac12$ to $f$. So,
$$ \mu^*(f)\geq -1 + 2\cdot \frac12 = 0.$$ 
If $f'$ is an $8$-face, then $v_4$ and $v_6$ each give $\frac14$ to $f$ by \Cref{10012} and \ru4, and $f'$ gives $f$ $\frac12$ by \ru5. Thus, 
$$ \mu^*(f)\geq -1 + 2\cdot \frac14 + \frac12 = 0.$$ 

Now, let us show that $\mu^*(f')\geq 0$. We know that $f'$ is an $8$-face so $\mu(f')=-1$ and $f'$ gives $\frac12$ to $f$ by \ru5. 

Let $f'=v'_4v_4v_5v_6v'_6v''_6v_8v''_4$. By \Cref{subfig:colorable_1c1a0a1a0a1}, $v''_4$ cannot be a $2$-vertex so it must be a $3$-vertex. Symmetrically, $v''_6$ must also be a $3$-vertex. By \Cref{subfig:colorable_1c1a0a1a0a2}, $v_8$ must also be a $3$-vertex. Observe that \ru5 can thus only apply once to $f'$. Let $u''_4$, $u''_6$, and $u_8$ be the neighbors that do not lie on $f'$ of $v''_4$, $v''_6$, and $v_8$ respectively.

Observe that $v_4$ and $v_6$ each give $\frac14$ to $f'$ by \Cref{10012} and \ru4. Moreover, since $v'_4$ cannot have a $2$-neighbor by \Cref{subfig:colorable_1c1a0a1a0a1}, $v'_4$ gives at least $\frac13$ to $f'$ by \Cref{0001,00032} and \ru3. Symmetrically, the same holds for $v'_6$. We conclude with the following cases:
\begin{itemize}
\item If $u''_4$ (or $u''_6$) is a $3$-vertex, then $v''_4$ (or $v''_6$) gives at least $\frac13$ to $f'$ by \Cref{0001,00032} and \ru3. To sum up,
$$ \mu^*(f') \geq -1 -\frac12 + 2\cdot \frac14 + 3\cdot \frac13  =0. $$
\item If $u''_4$ and $u''_6$ are $2$-vertices, then $u_8$ must be a $3$-vertex by \Cref{subfig:colorable_1c1a0a1a0a3}. In that case, $v_8$ gives at least $\frac13$ to $f'$ by \Cref{0001,00032} and \ru3. To sum up,
$$ \mu^*(f') \geq -1 -\frac12+ 2\cdot \frac14 + 3\cdot \frac13  =0. $$
\end{itemize}

To conclude, we started with a negative total amount of charge on the vertices and faces of $G$ by \Cref{euler} and after our discharging procedures, which preserve the total amount of charge, we ended up with a non-negative amount of charge on each vertex and face of $G$. This is a contradiction, so $G$ does not exists and this ends the proof of \Cref{thm:girth8}.

\section{Discussion on \Cref{thm:girth8}}
\label{sec4}

The discharging method is commonly used on planar graphs, very often because of their sparseness. Sometimes, the planarity of the graph is not needed, in which case the proofs hold for more general classes of sparse graphs (for example, graphs with bounded \emph{maximum average degree} $\mad(G)=\max_{H\subseteq G}\frac{2|E(H)|}{|V(H)|}$). This was the case for the proof of \Cref{thm:girth9} for example. However, when we increase the density of the graph by decreasing the girth of the planar graph or by increasing the maximum average degree, the result might hold for one case but not the other. In particular, the smallest class of graphs of bounded $\mad$ that contains planar graphs with girth at least $g$ has $\mad<\frac{2g}{g-2}$. The clearest example showing that planarity is needed is the Petersen graph with one edge removed: it has $\mad=\frac{14}{5}$ and it needs 8 colors, while all subcubic planar graphs are $7$-colorable~\cite{tho18,har16}. Observe that the class of graphs with $\mad\leq \frac{14}{5}$ does not even contain all planar graphs with girth 6.

For girth 8, the corresponding graphs with bounded $\mad$ verify $\mad<\frac{8}{3}$. If \Cref{thm:girth8} is generalizable to graphs with $\mad<\frac{8}{3}$, then it would be optimal in terms of $\mad$ as the Petersen graph with one vertex removed has $\mad=\frac{8}{3}$ and it needs $7$ colors. On the other hand, it is unclear whether there exists a planar graph with girth 7 needing 7 colors. 

A $5$-cycle with a subdivided chord, which has $\mad=\frac{7}{3}$, shows that a generalization of \Cref{thm:girth8} to graphs with $\mad<\frac{8}{3}$ would also be optimal in terms of number of colors. Once again, it does not mean that there exists a planar graph with girth 8 that needs 6 colors.

In what follows, we provide a planar subcubic construction, with relatively high girth that needs 6 colors. Precisely, we provide a construction of a planar subcubic graph having girth 6 and $
\chi^2\geq 6$.

We call our 5 colors $a$, $b$, $c$, $d$, and $e$.

\begin{lemma} \label{G'}
The graph $G'(u_1,u_2,v_1,v_2)$ in \Cref{fig:G'} has the following properties:
\begin{itemize}
\item $G'(u_1,u_2,v_1,v_2)$ is planar and subcubic.
\item $G'(u_1,u_2,v_1,v_2)$ has girth 6.
\item The distance in $G'(u_1,u_2,v_1,v_2)$ between $u_1$ and $v_1$ is 5.
\item For every $5$-coloring $\phi$ of $G'(u_1,u_2,v_1,v_2)$, if $\phi(u_1)=\phi(v_1)$, then $\phi(u_2)=\phi(v_2)$.
\end{itemize}
\end{lemma} 

\begin{figure}[H]
\centering
\subfloat[\label{fig:G'}The gadget $G'(u_1,u_2,v_1,v_2)$ in \Cref{G'}.]{
\begin{tikzpicture}[scale=0.7]{thick}
\begin{scope}[every node/.style={circle,draw,inner sep=1}]
	\node (u1) at (0,2) {$u_1$};
	\node (u2) at (1,2) {$u_2$};
	\node (u3) at (4,4) {$u_3$};
	\node (u4) at (2,0) {$u_4$};
	\node (w1) at (5,3) {$w_1$};
	\node (w2) at (7,1) {$w_2$};
	\node (x1) at (4,2) {$x_1$};
	\node (x2) at (3,1) {$x_2$};
	\node (y1) at (8,2) {$y_1$};
	\node (y2) at (9,3) {$y_2$};
	\node (v1) at (12,2) {$v_1$};
	\node (v2) at (11,2) {$v_2$};
	\node (v3) at (10,4) {$v_3$};
	\node (v4) at (8,0) {$v_4$};
\end{scope}

\begin{scope}[every edge/.style={draw=black}]
    \path (u1) edge (u2);
    \path (u2) edge (u3);
    \path (u2) edge (u4);
    \path (v4) edge (u4);
    \path (u3) edge (v3);
    \path (v2) edge (v4);
    \path (v2) edge (v3);
    \path (v2) edge (v1);
    \path (u3) edge (w1);
    \path (w1) edge (w2);
    \path (v4) edge (w2);
    \path (x1) edge (w1);
    \path (x1) edge (x2);
    \path (x2) edge (u4);
    \path (w2) edge (y1);
    \path (y2) edge (y1);
    \path (y2) edge (v3);
\end{scope}
\end{tikzpicture}
}
\subfloat[Simplified drawing of $G'(u_1,u_2,v_1,v_2)$.]{
\begin{tikzpicture}[scale=0.8]{thick}
\begin{scope}[every node/.style={circle,draw,inner sep=1}]
	\node (u1) at (0,0) {$u_1$};
	\node (u2) at (1,0) {$u_2$};
	\node (g') at (2.5,0) {\Large $G'$};
	\node (v2) at (4,0) {$v_2$};
	\node (v1) at (5,0) {$v_1$};
\end{scope}

\begin{scope}[every edge/.style={draw=black}]
    \path (u1) edge (u2);
    \path (v2) edge (v1);
    \path (u2) edge[bend left] (g');
    \path (u2) edge[bend right] (g');
    \path (v2) edge[bend left] (g');
    \path (v2) edge[bend right] (g');
\end{scope}
\end{tikzpicture}
}
\caption{}
\end{figure}

\begin{proof}
One can verify that $G'(u_1,u_2,v_1,v_2)$ is planar, subcubic, has girth 6, and that the distance between $u_1$ and $v_1$ is 5 thanks to \Cref{fig:G'}. It remains to prove that if $\phi(u_1)=\phi(v_1)$, then $\phi(u_2)=\phi(v_2)$ for every $5$-coloring $\phi$ of $G'(u_1,u_2,v_1,v_2)$. 

Suppose by contradiction that there exists a $5$-coloring $\phi$ of $G'(u_1,u_2,v_1,v_2)$ such that $\phi(u_1)=\phi(v_1)=a$, but $b=\phi(u_2)\neq \phi(v_2)=c$. We can assume w.l.o.g. that $\phi(u_3)=d$ and $\phi(u_4)=e$. As a result, we have $\phi(v_3)=e$ and $\phi(v_4)=d$. Since $w_1$ sees $u_2$, $u_3$, and $v_3$, $\phi(w_1)\in \{a,c\}$. Since $x_2$ sees $u_2$, $u_4$, and $v_4$, $\phi(x_2)\in \{a,c\}$. Since $y_2$ sees $v_2$, $v_3$, and $u_3$, $\phi(y_2)\in \{a,b\}$. Since $w_2$ sees $v_2$, $v_4$, and $u_4$, $\phi(w_2)\in \{a,b\}$.

\begin{itemize}
\item If $\phi(x_2)=c$, then $\phi(w_1)=a$, $\phi(w_2)=b$, and $\phi(y_2)=a$. However, $x_1$ sees $u_3$, $w_1$, $w_2$, $x_2$, and $u_4$ which are colored $d$, $a$, $b$, $c$, and $e$ respectively. So, $x_1$ is not colorable.
\item If $\phi(x_2)=a$, then $\phi(w_1)=c$.
\begin{itemize}
\item If $\phi(w_2)=b$, then $x_1$ is not colorable since it sees $u_3$, $w_1$, $w_2$, $x_2$, and $u_4$ which are colored $d$, $c$, $b$, $a$, and $e$ respectively.
\item If $\phi(w_2)=a$, then $\phi(y_2)=b$ and $y_1$ is not colorable since it sees $w_1$, $w_2$, $v_4$, $y_2$, and $v_3$ which are colored $c$, $a$, $d$, $b$, and $e$ respectively.
\end{itemize}
\end{itemize}
\end{proof}

\begin{lemma} \label{Gneq}
The graph $G_{\neq}(u_1,u_2,v_1,v_2)$ in \Cref{fig:Gneq} has the following properties:
\begin{itemize}
\item $G_{\neq}(u_1,u_2,v_1,v_2)$ is planar and subcubic.
\item $G_{\neq}(u_1,u_2,v_1,v_2)$ has girth 6.
\item The distance in $G_{\neq}(u_1,u_2,v_1,v_2)$ between $u_1$ and $v_1$ is 5.
\item Every $5$-coloring $\phi$ of $G_{\neq}(u_1,u_2,v_1,v_2)$ satisfies $\phi(u_1)\neq \phi(v_1)$ and $\phi(u_2)=\phi(v_2)$.
\end{itemize}
\end{lemma}

\begin{figure}[H]
\centering
\subfloat[\label{fig:Gneq}The gadget $G_{\neq}(u_1,u_2,v_1,v_2)$ in \Cref{Gneq}.]{
\begin{tikzpicture}[scale=0.7]{thick}
\begin{scope}[every node/.style={circle,draw,inner sep=1}]
	\node (u1) at (0,4) {$u_1$};
	\node (u2) at (1,4) {$u_2$};
	\node (u3) at (3,8) {$u_3$};
	\node (u4) at (3,0) {$u_4$};
	\node (w1) at (3,6) {$w_1$};
	\node (w2) at (4,6) {$w_2$};
	\node (x1) at (8,6) {$x_1$};
	\node (x2) at (7,6) {$x_2$};
	\node (y1) at (3,2) {$y_1$};
	\node (y2) at (4,2) {$y_2$};
	\node (z1) at (8,2) {$z_1$};
	\node (z2) at (7,2) {$z_2$};
	\node (s1) at (3,4) {$s_1$};
	\node (t1) at (8,4) {$t_1$};
	\node (v1) at (11,4) {$v_1$};
	\node (v2) at (10,4) {$v_2$};
	\node (v3) at (8,8) {$v_3$};
	\node (v4) at (8,0) {$v_4$};
	\node (g'1) at (5.5,6) {\Large $G'$};
	\node (g'2) at (5.5,2) {\Large $G'$};
\end{scope}

\begin{scope}[every edge/.style={draw=black}]
    \path (u1) edge (u2);
    \path (u2) edge (u3);
    \path (u2) edge (u4);
    \path (v4) edge (u4);
    \path (u3) edge (v3);
    \path (v2) edge (v4);
    \path (v2) edge (v3);
    \path (v2) edge (v1);
    \path (u3) edge (w1);
    \path (w1) edge (w2);
    \path (w1) edge (s1);
    \path (s1) edge (y1);
    \path (y1) edge (u4);
    \path (y1) edge (y2);
    \path (v3) edge (x1);
    \path (x1) edge (x2);
    \path (x1) edge (t1);
    \path (t1) edge (z1);
    \path (z1) edge (v4);
    \path (z1) edge (z2);
    \path (s1) edge (t1);
    \path (w2) edge[bend left] (g'1);
    \path (w2) edge[bend right] (g'1);
    \path (x2) edge[bend left] (g'1);
    \path (x2) edge[bend right] (g'1);
    \path (y2) edge[bend left] (g'2);
    \path (y2) edge[bend right] (g'2);
    \path (z2) edge[bend left] (g'2);
    \path (z2) edge[bend right] (g'2);
\end{scope}
\end{tikzpicture}
}
\subfloat[Simplified drawing of $G_{\neq}(u_1,u_2,v_1,v_2)$.]{
\begin{tikzpicture}[scale=0.8]{thick}
\begin{scope}[every node/.style={circle,draw,inner sep=1}]
	\node (u1) at (0,0) {$u_1$};
	\node (u2) at (1,0) {$u_2$};
	\node (g') at (2.5,0) {\Large $G_{\neq}$};
	\node (v2) at (4,0) {$v_2$};
	\node (v1) at (5,0) {$v_1$};
\end{scope}

\begin{scope}[every edge/.style={draw=black}]
    \path (u1) edge (u2);
    \path (v2) edge (v1);
    \path (u2) edge[bend left] (g');
    \path (u2) edge[bend right] (g');
    \path (v2) edge[bend left] (g');
    \path (v2) edge[bend right] (g');
\end{scope}
\end{tikzpicture}
}
\caption{}
\end{figure}

\begin{proof}
One can verify that $G_{\neq}(u_1,u_2,v_1,v_2)$ is planar, subcubic, has girth 6, and that the distance between $u_1$ and $v_1$ is 5 thanks to \Cref{fig:G'}. Now, let $\phi$ be a $2$-distance $5$-coloring of $G_{\neq}(u_1,u_2,v_1,v_2)$.

First, observe the following:
\begin{claim} \label{claim}
We have $\{\phi(w_1),\phi(s_1),\phi(t_1),\phi(u_3),\phi(v_3)\}\neq\{a,b,c,d,e\}$ and $\{\phi(y_1),\phi(s_1),\phi(t_1),\phi(u_4),\phi(v_4)\}\neq\{a,b,c,d,e\}$. 
\end{claim}

\begin{proof}
By symmetry, we can suppose by contradiction that $\{\phi(w_1),\phi(s_1),\phi(t_1),\phi(u_3),\phi(v_3)\}=\{a,b,c,d,e\}$. Since $\phi(x_1)\notin\{\phi(s_1),\phi(t_1),\phi(u_3),\phi(v_3)\}$, we get $\phi(w_1)=\phi(x_1)$, in which case $\phi(w_2)=\phi(x_2)$ by \Cref{G'} due to $G'(w_1,w_2,x_1,x_2)$. However, $\phi(w_2)\notin\{\phi(w_1),\phi(s_1),\phi(u_3)\}$ and $\phi(x_2)\notin\{\phi(x_1),\phi(t_1),\phi(v_3)\}$, which is impossible since $\{\phi(w_1),\phi(s_1),\phi(t_1),\phi(u_3),\phi(v_3)\}=\{a,b,c,d,e\}$.
\end{proof}

We can assume w.l.o.g. that $\phi(u_1)=a$, $\phi(u_2)=b$, $\phi(u_3)=c$, and $\phi(u_4)=d$. We claim the following.

\begin{claim} \label{claim2}
We must have $\{\phi(v_3),\phi(v_4)\}\neq\{c,d\}$.
\end{claim}

\begin{proof}
If $\{\phi(v_3),\phi(v_4)\}=\{c,d\}$, then $\phi(v_3)=d$ and $\phi(v_4)=c$. Observe that $\phi(w_1)$, $\phi(s_1)$, and $\phi(t_1)$ must all be distinct and they are also different from $\{c,d\}$. As a result, we get $\{\phi(w_1),\phi(s_1),\phi(t_1),\phi(u_3),\phi(v_3)\}=\{a,b,c,d,e\}$, which is impossible by \Cref{claim}.
\end{proof}

\begin{claim}\label{claim3}
If $\phi(v_4)=c$, then $d\notin \{\phi(w_1),\phi(x_1),\phi(x_2)\}$. Symmetrically, if $\phi(v_3)=d$, then $c\notin\{\phi(y_1),\phi(z_1),\phi(z_2)\}$.
\end{claim}

\begin{proof} 
If $\phi(v_4)=c$, then suppose by contradiction that $d \in \{\phi(w_1),\phi(x_1),\phi(x_2)\}$. Observe that $\phi(y_1)$, $\phi(s_1)$, and $\phi(t_1)$ must all be distinct and they are also different from $\{c,d\}$. As a result, we get $\{\phi(y_1),\phi(s_1),\phi(t_1),\phi(u_4),\phi(v_4)\}=\{a,b,c,d,e\}$, which is impossible by \Cref{claim}.

By symmetry, the same arguments hold for $c\notin\{\phi(y_1),\phi(z_1),\phi(z_2)\}$ when $\phi(v_3)=d$.
\end{proof}

Now, suppose by contradiction that we have the following cases.

\textbf{Case 1:} $\phi(u_1)=\phi(v_1)$.\\
In this case, $\phi(v_1)=\phi(u_1)=a$. Note that $\phi(v_2)\notin \{\phi(v_1),\phi(u_3),\phi(u_4)\}=\{a,c,d\}$. Moreover, if $\phi(v_2)=e$, then we necessarily have $\phi(v_3)=d$ and $\phi(v_4)=c$ which is impossible due to \Cref{claim2}. As a result, $\phi(v_2)=b$. 

By \Cref{claim2} and by symmetry, we can assume that $\phi(v_3)=e$ and as a consequence, $\phi(v_4)=c$. By \Cref{claim3}, $d\notin \{\phi(w_1),\phi(x_1),\phi(x_2)\}$. Consequently, $\phi(w_1)=a$ and $\phi(x_1)=a$, which in turn implies that $\phi(w_2)=\phi(x_2)=b$ by \Cref{G'} and $G'(w_1,w_2,x_1,x_2)$. Hence, $\phi(s_1)=e$ and we get a contradiction since $\phi(y_1)\notin\{\phi(w_1),\phi(s_1),\phi(u_2),\phi(u_4),\phi(v_4)\}=\{a,e,b,d,c\}$.

\textbf{Case 2:} $\phi(u_2)\neq \phi(v_2)$.\\
Since $\phi(v_2)\notin\{\phi(u_2),\phi(u_3),\phi(u_4)\}=\{b,c,d\}$, we have $\phi(v_2)\in\{a,e\}$. By \Cref{claim2} and by symmetry, we can assume that $\phi(v_3)\in \{a,e\}$. As a consequence, $\{\phi(v_2),\phi(v_3)\}=\{a,e\}$ and $\phi(v_4)=c$. By \Cref{claim3}, $d\notin \{\phi(w_1),\phi(x_1),\phi(x_2)\}$. Consequently, $\phi(w_1)=\phi(v_2)$ and $\phi(x_1)=b$. Hence, $\phi(s_1)=\phi(v_3)$ and we get a contradiction since $\phi(y_1)\notin\{\phi(w_1),\phi(s_1),\phi(u_2),\phi(u_4),\phi(v_4)\}=\{a,e,b,d,c\}$.
\end{proof}

\begin{lemma} \label{lemma:non-5-colorable}
The graph in \Cref{fig:non-5-colorable} is a planar subcubic graph of girth 6 with 2-distance chromatic number at least 6.
\end{lemma} 

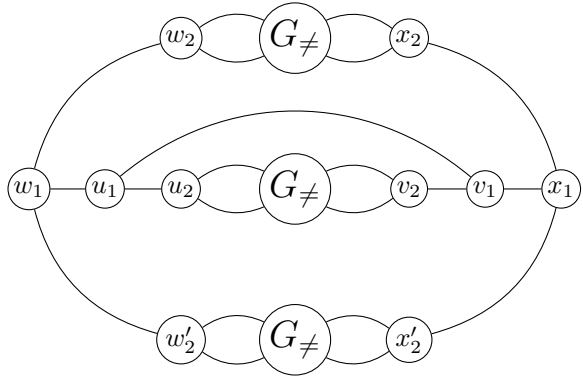
\begin{figure}[H]
\centering
\begin{tikzpicture}{thick}
\begin{scope}[every node/.style={circle,draw,inner sep=1}]
	\node (w1) at (-1,0) {$w_1$};	
	\node (u1) at (0,0) {$u_1$};
	\node (u2) at (1,0) {$u_2$};
	\node (g'1) at (2.5,0) {\Large $G_{\neq}$};
	\node (v2) at (4,0) {$v_2$};
	\node (v1) at (5,0) {$v_1$};
	\node (x1) at (6,0) {$x_1$};
	\node (w2) at (1,2) {$w_2$};
	\node (g'2) at (2.5,2) {\Large $G_{\neq}$};
	\node (x2) at (4,2) {$x_2$};
	\node (w'2) at (1,-2) {$w'_2$};
	\node (g'3) at (2.5,-2) {\Large $G_{\neq}$};
	\node (x'2) at (4,-2) {$x'_2$};
\end{scope}

\begin{scope}[every edge/.style={draw=black}]
	\path (w1) edge (u1);
	\path (x1) edge (v1);    
    \path (u1) edge (u2);
    \path (v2) edge (v1);
    \path (u2) edge[bend left] (g'1);
    \path (u2) edge[bend right] (g'1);
    \path (v2) edge[bend left] (g'1);
    \path (v2) edge[bend right] (g'1);
    \path (u1) edge[bend left=40] (v1);
    \path (w2) edge[bend left] (g'2);
    \path (w2) edge[bend right] (g'2);
    \path (x2) edge[bend left] (g'2);
    \path (x2) edge[bend right] (g'2);
    \path (w'2) edge[bend left] (g'3);
    \path (w'2) edge[bend right] (g'3);
    \path (x'2) edge[bend left] (g'3);
    \path (x'2) edge[bend right] (g'3);
    \path (w1) edge[bend left] (w2);
    \path (x2) edge[bend left] (x1);
    \path (w1) edge[bend right] (w'2);
    \path (x'2) edge[bend right] (x1);
\end{scope}
\end{tikzpicture}
\caption{\label{fig:non-5-colorable}A non-5-colorable planar subcubic graph of girth 6.}
\end{figure}

\begin{proof}
One can easily verify that the graph $G$ in \Cref{fig:non-5-colorable} is planar, subcubic, and has girth 6. Suppose by contradiction that there exists a $2$-distance $5$-coloring $\phi$ of $G$. Suppose w.l.o.g. that $\phi(w_1)=a$, $\phi(u_1)=b$, $\phi(u_2)=c$, and $\phi(v_1)=d$. By \Cref{Gneq}, $\phi(v_2)= \phi(u_2)=c$ due to $G_{\neq}(u_1,u_2,v_1,v_2)$ and $\phi(x_1)\neq\phi(w_1)$ due to $G_{\neq}(w_1,w_2,x_1,x_2)$. Moreover, since $\phi(x_1)\notin\{\phi(v_1),\phi(v_2),\phi(u_1)\}=\{d,c,b\}$. We must have $\phi(x_1)=e$. By \Cref{Gneq}, we also have $\phi(w_2)=\phi(x_2)$ due to $G_{\neq}(w_1,w_2,x_1,x_2)$. Since $\phi(w_2)\notin\{\phi(w_1),\phi(u_1)\}=\{a,b\}$ and $\phi(x_2)\notin\{\phi(v_1),\phi(x_1)\}=\{d,e\}$, we get $\phi(w_2)=\phi(x_2)=c$. By symmetry, we also get $\phi(w'_2)=\phi(x'_2)=c$, which is impossible since $w_2$ sees $w'_2$.
\end{proof}

\section{Generalization of the vertices-to-faces discharging verification algorithm\label{sec5}}

In \Cref{subsec:second_round_discharging}, we presented an algorithm (\Cref{algorithm}) that automates the discharging procedure with a given set of reducible configurations. This becomes extremely helpful for proofs where the discharging procedure involve a large case analysis. For the input we efficiently encode a face, the set of reducible configurations, as well as the amount of charge of a vertex depending on its neighborhood.  The corresponding computer program was written in Python. The source code and its documentation is publically available on \url{https://gite.lirmm.fr/discharging/planar-graphs}. In the case of \Cref{thm:girth8}, the execution time takes few seconds on a standard machine. 

On the public repository, we also provide another example where we proved the 2-distance 8-choosability of planar graphs with maximum degree 4 and girth at least 7, a result by Cranston \textit{et al.} in \cite{cra13}, using a small amount configurations that can be easily reduced (by hand or by computer) and very naive discharging rules. The idea is to move towards a computer automation of proofs using the discharging method.

Our approach can be applied to other problems on planar graphs by concentrating charges on the vertices of the graph when the distribution of charges is made (according to the Euler formula). First, one has to obtain a non-negative sum of charges on the vertices (by realizing an easy discharging procedure for example). This concentrates the difficulty of the problem on the second round of discharging. In this round, one has to redistribute the remaining charge of the vertices to the faces with negative charge and that is where our algorithm can come in handy. Note that the way our algorithm is designed, a vertex can also take charge from a face by giving it a negative charge.

The encoding of a face with a number-word and a letter-word can be done in the same way. In our case, since $G$ has no $2^+$-paths, the number-word of a face is composed of integers in $\{0,1\}$. But this alphabet can be extended to $\{0,1,\dots,k-1\}$ if $G$ has no $k^+$-paths. Observe that one can partition a face into $i$-paths ($0\leq i\leq k$) and consider that each path contains only one endvertex. Therefore, in order to obtain the starting number-words for a face of size $d(f)$, it suffices to decompose $d(f)$ into sums where each term corresponds to the number of vertices in an $i$-path.

As for the letter-words, it suffices to choose a letter for each different neighborhood of interest outside the considered face. In our case, three letters are sufficient but one can always work with a larger alphabet to suit the considered problems. Once the convention for the encoding of a face is fixed, the reducible configurations and entries of the dictionary of charges can be done in the same way.

There are a few details to note about the entries of the dictionary. First, the position of the vertex $v$ holding the charge must be in the center of the entry (or just left of the center). Second, the encoding has to  start and end with a number. These properties can be guaranteed by extending the encoding with every possible sequence up to a certain length. Finally, one has to be mindful that $v$ is in the center when the length of the encoding is congruent to 3 modulo 4, and left of the center when it is congruent to 1 modulo 4.

Moreover, we would like to note that, when a discharging procedure along with the given reducible configurations does not prove the desired result, \Cref{algorithm} returns a sufficient set of missing configurations (to be reduced). This helps to pinpoint the possible difficulty of the proof using discharging. In practice, we started out with a simple discharging procedure. Then, we proceed by reducing the missing configurations returned by \Cref{algorithm}. When there are non-reducible configurations, we further refine our discharging procedure and repeat the process until we reach a sufficient set of discharging rules and reducible configurations. This is how we obtained the configurations in \Cref{lemma:first_reducible_cycles,lemma:reducible,lemma:reducible_cycles,lemma:special_reducible}. In that sense, \Cref{algorithm} is not only a tool to verify a proof but also a tool to assist the research process. 

We also wanted to prove that subcubic planar graphs with girth at least 11 are $2$-distance 5-colorable (which would have improved the non-list version of the result in~\cite{bi12} by Borodin and Ivanova). The computer program returned the problematic configurations which made us realize the difficulty of finding the right discharging rules and reducible configurations.

\section*{Acknowledgements}

We would like to thank Mickael Montassier and Alexandre Pinlou for the helpful discussions on earlier versions of this paper. The second author was supported by the French ANR project DISTANCIA: ANR-17-CE40-0015.

\bibliographystyle{plain}
\bibliography{References.bib}

\begin{appendices}
\section{Reducible cycles}

\begin{figure}[H]
\centering
\hspace*{-1cm}
\subfloat[\texttt{1a1a0c0a0b0c}]{
\label{subfig:1a1a0c0a0b0c}
\scalebox{0.65}{
	\begin{tikzpicture}[join=bevel,vertex/.style={circle,draw, minimum size=0.6cm},inner sep=0mm,scale=0.8]
\foreach \i in {0,...,7}  \node[vertex] (\i) at (-45.0*\i:3) {$v_\i$};
\foreach \i in {0,...,7}  \draw let \n1={int(mod(\i+1,8))} in (\i) -- (\n1);
\node at (-45.0*0:2.1) {\small $5$};
\node at (-45.0*2:2.1) {\small $4$};
\node (u1) at (-45.0*1:4) {};
\draw (1) -- (u1);
\node at (-45.0*1:2.1) {\small $3$};
\node (u3) at (-45.0*3:4) {};
\draw (3) -- (u3);
\node at (-45.0*3:2.1) {\small $3$};
\node[vertex] (v'4) at (-45.0*4:4.5) {$v'_4$};
\node (u'4) at (-45.0*4:5.5) {};
\node at (-45.0*4-11:4.5) {\small $3$};
\draw (4) -- (v'4);
\draw (v'4) -- (u'4);
\node at (-45.0*4:2.1) {\small $3$};
\node (u5) at (-45.0*5:4) {};
\draw (5) -- (u5);
\node at (-45.0*5:2.1) {\small $3$};
\node[vertex] (v'6) at (-45.0*6:4.5) {$v'_6$};
\node[vertex] (v''6) at (-45.0*6-7:6) {$v''_6$};
\node[vertex] (v'''6) at (-45.0*6+7:6) {$v'''_6$};
\node (u''6) at (-45.0*6-7:7) {};
\node (u'''6) at (-45.0*6+7:7) {};
\node at (-45.0*6-11:4.5) {\small $4$};
\node at (-45.0*6-14:6) {\small $3$};
\node at (-45.0*6+14:6) {\small $3$};
\draw (6) -- (v'6);
\draw (v'6) -- (v''6);
\draw (v'6) -- (v'''6);
\draw (v''6) -- (u''6);
\draw (v'''6) -- (u'''6);
\node at (-45.0*6:2.1) {\small $5$};
\node[vertex] (v'7) at (-45.0*7:4.5) {$v'_7$};
\node (u'7) at (-45.0*7:5.5) {};
\node at (-45.0*7-11:4.5) {\small $3$};
\draw (7) -- (v'7);
\draw (v'7) -- (u'7);
\node at (-45.0*7:2.1) {\small $5$};
\end{tikzpicture}
}
}
\subfloat[\texttt{1c0a0c0b0c0b0c}]{
\label{subfig:1c0a0c0b0c0b0c}
\scalebox{0.65}{
	\begin{tikzpicture}[join=bevel,vertex/.style={circle,draw, minimum size=0.6cm},inner sep=0mm,scale=0.8]
\foreach \i in {0,...,7}  \node[vertex] (\i) at (-45.0*\i:3) {$v_\i$};
\foreach \i in {0,...,7}  \draw let \n1={int(mod(\i+1,8))} in (\i) -- (\n1);
\node at (-45.0*0:2.1) {\small $6$};
\node[vertex] (v'1) at (-45.0*1:4.5) {$v'_1$};
\node (u'1) at (-45.0*1:5.5) {};
\node at (-45.0*1-11:4.5) {\small $3$};
\draw (1) -- (v'1);
\draw (v'1) -- (u'1);
\node at (-45.0*1:2.1) {\small $4$};
\node (u2) at (-45.0*2:4) {};
\draw (2) -- (u2);
\node at (-45.0*2:2.1) {\small $3$};
\node[vertex] (v'3) at (-45.0*3:4.5) {$v'_3$};
\node (u'3) at (-45.0*3:5.5) {};
\node at (-45.0*3-11:4.5) {\small $3$};
\draw (3) -- (v'3);
\draw (v'3) -- (u'3);
\node at (-45.0*3:2.1) {\small $4$};
\node[vertex] (v'4) at (-45.0*4:4.5) {$v'_4$};
\node[vertex] (v''4) at (-45.0*4-7:6) {$v''_4$};
\node[vertex] (v'''4) at (-45.0*4+7:6) {$v'''_4$};
\node (u''4) at (-45.0*4-7:7) {};
\node (u'''4) at (-45.0*4+7:7) {};
\node at (-45.0*4-11:4.5) {\small $4$};
\node at (-45.0*4-14:6) {\small $3$};
\node at (-45.0*4+14:6) {\small $3$};
\draw (4) -- (v'4);
\draw (v'4) -- (v''4);
\draw (v'4) -- (v'''4);
\draw (v''4) -- (u''4);
\draw (v'''4) -- (u'''4);
\node at (-45.0*4:2.1) {\small $6$};
\node[vertex] (v'5) at (-45.0*5:4.5) {$v'_5$};
\node (u'5) at (-45.0*5:5.5) {};
\node at (-45.0*5-11:4.5) {\small $3$};
\draw (5) -- (v'5);
\draw (v'5) -- (u'5);
\node at (-45.0*5:2.1) {\small $5$};
\node[vertex] (v'6) at (-45.0*6:4.5) {$v'_6$};
\node[vertex] (v''6) at (-45.0*6-7:6) {$v''_6$};
\node[vertex] (v'''6) at (-45.0*6+7:6) {$v'''_6$};
\node (u''6) at (-45.0*6-7:7) {};
\node (u'''6) at (-45.0*6+7:7) {};
\node at (-45.0*6-11:4.5) {\small $4$};
\node at (-45.0*6-14:6) {\small $3$};
\node at (-45.0*6+14:6) {\small $3$};
\draw (6) -- (v'6);
\draw (v'6) -- (v''6);
\draw (v'6) -- (v'''6);
\draw (v''6) -- (u''6);
\draw (v'''6) -- (u'''6);
\node at (-45.0*6:2.1) {\small $6$};
\node[vertex] (v'7) at (-45.0*7:4.5) {$v'_7$};
\node (u'7) at (-45.0*7:5.5) {};
\node at (-45.0*7-11:4.5) {\small $3$};
\draw (7) -- (v'7);
\draw (v'7) -- (u'7);
\node at (-45.0*7:2.1) {\small $5$};
\end{tikzpicture}
}
}
\subfloat[\texttt{1c0a0c0c0b0b0c}]{
\label{subfig:1c0a0c0c0b0b0c}
\scalebox{0.65}{
	\begin{tikzpicture}[join=bevel,vertex/.style={circle,draw, minimum size=0.6cm},inner sep=0mm,scale=0.8]
\foreach \i in {0,...,7}  \node[vertex] (\i) at (-45.0*\i:3) {$v_\i$};
\foreach \i in {0,...,7}  \draw let \n1={int(mod(\i+1,8))} in (\i) -- (\n1);
\node at (-45.0*0:2.1) {\small $6$};
\node[vertex] (v'1) at (-45.0*1:4.5) {$v'_1$};
\node (u'1) at (-45.0*1:5.5) {};
\node at (-45.0*1-11:4.5) {\small $3$};
\draw (1) -- (v'1);
\draw (v'1) -- (u'1);
\node at (-45.0*1:2.1) {\small $4$};
\node (u2) at (-45.0*2:4) {};
\draw (2) -- (u2);
\node at (-45.0*2:2.1) {\small $3$};
\node[vertex] (v'3) at (-45.0*3:4.5) {$v'_3$};
\node (u'3) at (-45.0*3:5.5) {};
\node at (-45.0*3-11:4.5) {\small $3$};
\draw (3) -- (v'3);
\draw (v'3) -- (u'3);
\node at (-45.0*3:2.1) {\small $4$};
\node[vertex] (v'4) at (-45.0*4:4.5) {$v'_4$};
\node (u'4) at (-45.0*4:5.5) {};
\node at (-45.0*4-11:4.5) {\small $3$};
\draw (4) -- (v'4);
\draw (v'4) -- (u'4);
\node at (-45.0*4:2.1) {\small $5$};
\node[vertex] (v'5) at (-45.0*5:4.5) {$v'_5$};
\node[vertex] (v''5) at (-45.0*5-7:6) {$v''_5$};
\node[vertex] (v'''5) at (-45.0*5+7:6) {$v'''_5$};
\node (u''5) at (-45.0*5-7:7) {};
\node (u'''5) at (-45.0*5+7:7) {};
\node at (-45.0*5-11:4.5) {\small $4$};
\node at (-45.0*5-14:6) {\small $3$};
\node at (-45.0*5+14:6) {\small $3$};
\draw (5) -- (v'5);
\draw (v'5) -- (v''5);
\draw (v'5) -- (v'''5);
\draw (v''5) -- (u''5);
\draw (v'''5) -- (u'''5);
\node at (-45.0*5:2.1) {\small $6$};
\node[vertex] (v'6) at (-45.0*6:4.5) {$v'_6$};
\node[vertex] (v''6) at (-45.0*6-7:6) {$v''_6$};
\node[vertex] (v'''6) at (-45.0*6+7:6) {$v'''_6$};
\node (u''6) at (-45.0*6-7:7) {};
\node (u'''6) at (-45.0*6+7:7) {};
\node at (-45.0*6-11:4.5) {\small $4$};
\node at (-45.0*6-14:6) {\small $3$};
\node at (-45.0*6+14:6) {\small $3$};
\draw (6) -- (v'6);
\draw (v'6) -- (v''6);
\draw (v'6) -- (v'''6);
\draw (v''6) -- (u''6);
\draw (v'''6) -- (u'''6);
\node at (-45.0*6:2.1) {\small $6$};
\node[vertex] (v'7) at (-45.0*7:4.5) {$v'_7$};
\node (u'7) at (-45.0*7:5.5) {};
\node at (-45.0*7-11:4.5) {\small $3$};
\draw (7) -- (v'7);
\draw (v'7) -- (u'7);
\node at (-45.0*7:2.1) {\small $5$};
\end{tikzpicture}
}
}

\hspace*{0.4cm}
\subfloat[\texttt{1c0b0c0a0c0b0c}]{
\label{subfig:1c0b0c0a0c0b0c}
\scalebox{0.65}{
	\begin{tikzpicture}[join=bevel,vertex/.style={circle,draw, minimum size=0.6cm},inner sep=0mm,scale=0.8]
\foreach \i in {0,...,7}  \node[vertex] (\i) at (-45.0*\i:3) {$v_\i$};
\foreach \i in {0,...,7}  \draw let \n1={int(mod(\i+1,8))} in (\i) -- (\n1);
\node at (-45.0*0:2.1) {\small $6$};
\node[vertex] (v'1) at (-45.0*1:4.5) {$v'_1$};
\node (u'1) at (-45.0*1:5.5) {};
\node at (-45.0*1-11:4.5) {\small $3$};
\draw (1) -- (v'1);
\draw (v'1) -- (u'1);
\node at (-45.0*1:2.1) {\small $5$};
\node[vertex] (v'2) at (-45.0*2:4.5) {$v'_2$};
\node[vertex] (v''2) at (-45.0*2-7:6) {$v''_2$};
\node[vertex] (v'''2) at (-45.0*2+7:6) {$v'''_2$};
\node (u''2) at (-45.0*2-7:7) {};
\node (u'''2) at (-45.0*2+7:7) {};
\node at (-45.0*2-11:4.5) {\small $4$};
\node at (-45.0*2-14:6) {\small $3$};
\node at (-45.0*2+14:6) {\small $3$};
\draw (2) -- (v'2);
\draw (v'2) -- (v''2);
\draw (v'2) -- (v'''2);
\draw (v''2) -- (u''2);
\draw (v'''2) -- (u'''2);
\node at (-45.0*2:2.1) {\small $6$};
\node[vertex] (v'3) at (-45.0*3:4.5) {$v'_3$};
\node (u'3) at (-45.0*3:5.5) {};
\node at (-45.0*3-11:4.5) {\small $3$};
\draw (3) -- (v'3);
\draw (v'3) -- (u'3);
\node at (-45.0*3:2.1) {\small $4$};
\node (u4) at (-45.0*4:4) {};
\draw (4) -- (u4);
\node at (-45.0*4:2.1) {\small $3$};
\node[vertex] (v'5) at (-45.0*5:4.5) {$v'_5$};
\node (u'5) at (-45.0*5:5.5) {};
\node at (-45.0*5-11:4.5) {\small $3$};
\draw (5) -- (v'5);
\draw (v'5) -- (u'5);
\node at (-45.0*5:2.1) {\small $4$};
\node[vertex] (v'6) at (-45.0*6:4.5) {$v'_6$};
\node[vertex] (v''6) at (-45.0*6-7:6) {$v''_6$};
\node[vertex] (v'''6) at (-45.0*6+7:6) {$v'''_6$};
\node (u''6) at (-45.0*6-7:7) {};
\node (u'''6) at (-45.0*6+7:7) {};
\node at (-45.0*6-11:4.5) {\small $4$};
\node at (-45.0*6-14:6) {\small $3$};
\node at (-45.0*6+14:6) {\small $3$};
\draw (6) -- (v'6);
\draw (v'6) -- (v''6);
\draw (v'6) -- (v'''6);
\draw (v''6) -- (u''6);
\draw (v'''6) -- (u'''6);
\node at (-45.0*6:2.1) {\small $6$};
\node[vertex] (v'7) at (-45.0*7:4.5) {$v'_7$};
\node (u'7) at (-45.0*7:5.5) {};
\node at (-45.0*7-11:4.5) {\small $3$};
\draw (7) -- (v'7);
\draw (v'7) -- (u'7);
\node at (-45.0*7:2.1) {\small $5$};
\end{tikzpicture}
}
}
\subfloat[\texttt{1a0a0c1c0b0a}]{
\label{subfig:1a0a0c1c0b0a}
\scalebox{0.65}{
	\begin{tikzpicture}[join=bevel,vertex/.style={circle,draw, minimum size=0.6cm},inner sep=0mm,scale=0.8]
\foreach \i in {0,...,7}  \node[vertex] (\i) at (-45.0*\i:3) {$v_\i$};
\foreach \i in {0,...,7}  \draw let \n1={int(mod(\i+1,8))} in (\i) -- (\n1);
\node at (-45.0*0:2.1) {\small $4$};
\node at (-45.0*4:2.1) {\small $6$};
\node (u1) at (-45.0*1:4) {};
\draw (1) -- (u1);
\node at (-45.0*1:2.1) {\small $2$};
\node (u2) at (-45.0*2:4) {};
\draw (2) -- (u2);
\node at (-45.0*2:2.1) {\small $2$};
\node[vertex] (v'3) at (-45.0*3:4.5) {$v'_3$};
\node (u'3) at (-45.0*3:5.5) {};
\node at (-45.0*3-11:4.5) {\small $3$};
\draw (3) -- (v'3);
\draw (v'3) -- (u'3);
\node at (-45.0*3:2.1) {\small $4$};
\node[vertex] (v'5) at (-45.0*5:4.5) {$v'_5$};
\node (u'5) at (-45.0*5:5.5) {};
\node at (-45.0*5-11:4.5) {\small $3$};
\draw (5) -- (v'5);
\draw (v'5) -- (u'5);
\node at (-45.0*5:2.1) {\small $5$};
\node[vertex] (v'6) at (-45.0*6:4.5) {$v'_6$};
\node[vertex] (v''6) at (-45.0*6-7:6) {$v''_6$};
\node[vertex] (v'''6) at (-45.0*6+7:6) {$v'''_6$};
\node (u''6) at (-45.0*6-7:7) {};
\node (u'''6) at (-45.0*6+7:7) {};
\node at (-45.0*6-11:4.5) {\small $4$};
\node at (-45.0*6-14:6) {\small $3$};
\node at (-45.0*6+14:6) {\small $3$};
\draw (6) -- (v'6);
\draw (v'6) -- (v''6);
\draw (v'6) -- (v'''6);
\draw (v''6) -- (u''6);
\draw (v'''6) -- (u'''6);
\node at (-45.0*6:2.1) {\small $5$};
\node (u7) at (-45.0*7:4) {};
\draw (7) -- (u7);
\node at (-45.0*7:2.1) {\small $3$};
\end{tikzpicture}
}
}
\hspace*{0.4cm}
\subfloat[\texttt{1a0a1c0a0c0c}]{
\label{subfig:1a0a1c0a0c0c}
\scalebox{0.65}{
	\begin{tikzpicture}[join=bevel,vertex/.style={circle,draw, minimum size=0.6cm},inner sep=0mm,scale=0.8]
\foreach \i in {0,...,7}  \node[vertex] (\i) at (-45.0*\i:3) {$v_\i$};
\foreach \i in {0,...,7}  \draw let \n1={int(mod(\i+1,8))} in (\i) -- (\n1);
\node at (-45.0*0:2.1) {\small $5$};
\node at (-45.0*3:2.1) {\small $5$};
\node (u1) at (-45.0*1:4) {};
\draw (1) -- (u1);
\node at (-45.0*1:2.1) {\small $2$};
\node (u2) at (-45.0*2:4) {};
\draw (2) -- (u2);
\node at (-45.0*2:2.1) {\small $2$};
\node[vertex] (v'4) at (-45.0*4:4.5) {$v'_4$};
\node (u'4) at (-45.0*4:5.5) {};
\node at (-45.0*4-11:4.5) {\small $3$};
\draw (4) -- (v'4);
\draw (v'4) -- (u'4);
\node at (-45.0*4:2.1) {\small $4$};
\node (u5) at (-45.0*5:4) {};
\draw (5) -- (u5);
\node at (-45.0*5:2.1) {\small $3$};
\node[vertex] (v'6) at (-45.0*6:4.5) {$v'_6$};
\node (u'6) at (-45.0*6:5.5) {};
\node at (-45.0*6-11:4.5) {\small $3$};
\draw (6) -- (v'6);
\draw (v'6) -- (u'6);
\node at (-45.0*6:2.1) {\small $4$};
\node[vertex] (v'7) at (-45.0*7:4.5) {$v'_7$};
\node (u'7) at (-45.0*7:5.5) {};
\node at (-45.0*7-11:4.5) {\small $3$};
\draw (7) -- (v'7);
\draw (v'7) -- (u'7);
\node at (-45.0*7:2.1) {\small $5$};
\end{tikzpicture}
}
}

\subfloat[\texttt{1a0c0a1c0a0c}]{
\label{subfig:1a0c0a1c0a0c}
\scalebox{0.65}{
	\begin{tikzpicture}[join=bevel,vertex/.style={circle,draw, minimum size=0.6cm},inner sep=0mm,scale=0.8]
\foreach \i in {0,...,7}  \node[vertex] (\i) at (-45.0*\i:3) {$v_\i$};
\foreach \i in {0,...,7}  \draw let \n1={int(mod(\i+1,8))} in (\i) -- (\n1);
\node at (-45.0*0:2.1) {\small $5$};
\node at (-45.0*4:2.1) {\small $5$};
\node (u1) at (-45.0*1:4) {};
\draw (1) -- (u1);
\node at (-45.0*1:2.1) {\small $3$};
\node[vertex] (v'2) at (-45.0*2:4.5) {$v'_2$};
\node (u'2) at (-45.0*2:5.5) {};
\node at (-45.0*2-11:4.5) {\small $3$};
\draw (2) -- (v'2);
\draw (v'2) -- (u'2);
\node at (-45.0*2:2.1) {\small $3$};
\node (u3) at (-45.0*3:4) {};
\draw (3) -- (u3);
\node at (-45.0*3:2.1) {\small $3$};
\node[vertex] (v'5) at (-45.0*5:4.5) {$v'_5$};
\node (u'5) at (-45.0*5:5.5) {};
\node at (-45.0*5-11:4.5) {\small $3$};
\draw (5) -- (v'5);
\draw (v'5) -- (u'5);
\node at (-45.0*5:2.1) {\small $4$};
\node (u6) at (-45.0*6:4) {};
\draw (6) -- (u6);
\node at (-45.0*6:2.1) {\small $3$};
\node[vertex] (v'7) at (-45.0*7:4.5) {$v'_7$};
\node (u'7) at (-45.0*7:5.5) {};
\node at (-45.0*7-11:4.5) {\small $3$};
\draw (7) -- (v'7);
\draw (v'7) -- (u'7);
\node at (-45.0*7:2.1) {\small $4$};
\end{tikzpicture}
}
}
\subfloat[\texttt{1a0a0c0c0c0c0a}]{
\label{subfig:1a0a0c0c0c0c0a}
\scalebox{0.65}{
	\begin{tikzpicture}[join=bevel,vertex/.style={circle,draw, minimum size=0.6cm},inner sep=0mm,scale=0.8]
\foreach \i in {0,...,7}  \node[vertex] (\i) at (-45.0*\i:3) {$v_\i$};
\foreach \i in {0,...,7}  \draw let \n1={int(mod(\i+1,8))} in (\i) -- (\n1);
\node at (-45.0*0:2.1) {\small $4$};
\node (u1) at (-45.0*1:4) {};
\draw (1) -- (u1);
\node at (-45.0*1:2.1) {\small $2$};
\node at (-45.0*2:2.1) {\small $2$};
\node (u2) at (-45.0*2:4) {};
\draw (2) -- (u2);
\node[vertex] (v'3) at (-45.0*3:4.5) {$v'_3$};
\node (u'3) at (-45.0*3:5.5) {};
\node at (-45.0*3-11:4.5) {\small $3$};
\draw (3) -- (v'3);
\draw (v'3) -- (u'3);
\node at (-45.0*3:2.1) {\small $4$};
\node[vertex] (v'4) at (-45.0*4:4.5) {$v'_4$};
\node (u'4) at (-45.0*4:5.5) {};
\node at (-45.0*4-11:4.5) {\small $3$};
\draw (4) -- (v'4);
\draw (v'4) -- (u'4);
\node at (-45.0*4:2.1) {\small $5$};
\node[vertex] (v'5) at (-45.0*5:4.5) {$v'_5$};
\node (u'5) at (-45.0*5:5.5) {};
\node at (-45.0*5-11:4.5) {\small $3$};
\draw (5) -- (v'5);
\draw (v'5) -- (u'5);
\node at (-45.0*5:2.1) {\small $5$};
\node[vertex] (v'6) at (-45.0*6:4.5) {$v'_6$};
\node (u'6) at (-45.0*6:5.5) {};
\node at (-45.0*6-11:4.5) {\small $3$};
\draw (6) -- (v'6);
\draw (v'6) -- (u'6);
\node at (-45.0*6:2.1) {\small $4$};
\node (u7) at (-45.0*7:4) {};
\draw (7) -- (u7);
\node at (-45.0*7:2.1) {\small $3$};
\end{tikzpicture}
}
}
\hspace*{0.5cm}
\subfloat[\texttt{1a1a0b0c0a0c}]{
\label{subfig:1a1a0b0c0a0c}
\scalebox{0.65}{
	\begin{tikzpicture}[join=bevel,vertex/.style={circle,draw, minimum size=0.6cm},inner sep=0mm,scale=0.8]
\foreach \i in {0,...,7}  \node[vertex] (\i) at (-45.0*\i:3) {$v_\i$};
\foreach \i in {0,...,7}  \draw let \n1={int(mod(\i+1,8))} in (\i) -- (\n1);
\node at (-45.0*0:2.1) {\small $5$};
\node at (-45.0*2:2.1) {\small $4$};
\node (u1) at (-45.0*1:4) {};
\draw (1) -- (u1);
\node at (-45.0*1:2.1) {\small $3$};
\node (u3) at (-45.0*3:4) {};
\draw (3) -- (u3);
\node at (-45.0*3:2.1) {\small $3$};
\node[vertex] (v'4) at (-45.0*4:4.5) {$v'_4$};
\node[vertex] (v''4) at (-45.0*4-7:6) {$v''_4$};
\node[vertex] (v'''4) at (-45.0*4+7:6) {$v'''_4$};
\node (u''4) at (-45.0*4-7:7) {};
\node (u'''4) at (-45.0*4+7:7) {};
\node at (-45.0*4-11:4.5) {\small $4$};
\node at (-45.0*4-14:6) {\small $3$};
\node at (-45.0*4+14:6) {\small $3$};
\draw (4) -- (v'4);
\draw (v'4) -- (v''4);
\draw (v'4) -- (v'''4);
\draw (v''4) -- (u''4);
\draw (v'''4) -- (u'''4);
\node at (-45.0*4:2.1) {\small $5$};
\node[vertex] (v'5) at (-45.0*5:4.5) {$v'_5$};
\node (u'5) at (-45.0*5:5.5) {};
\node at (-45.0*5-11:4.5) {\small $3$};
\draw (5) -- (v'5);
\draw (v'5) -- (u'5);
\node at (-45.0*5:2.1) {\small $4$};
\node (u6) at (-45.0*6:4) {};
\draw (6) -- (u6);
\node at (-45.0*6:2.1) {\small $3$};
\node[vertex] (v'7) at (-45.0*7:4.5) {$v'_7$};
\node (u'7) at (-45.0*7:5.5) {};
\node at (-45.0*7-11:4.5) {\small $3$};
\draw (7) -- (v'7);
\draw (v'7) -- (u'7);
\node at (-45.0*7:2.1) {\small $4$};
\end{tikzpicture}
}
}

\phantomcaption
\end{figure}

\begin{figure}[H]
\ContinuedFloat

\hspace*{-1.3cm}
\subfloat[\texttt{1a1a0c0b0c0a}]{
\label{subfig:1a1a0c0b0c0a}
\scalebox{0.65}{
	\begin{tikzpicture}[join=bevel,vertex/.style={circle,draw, minimum size=0.6cm},inner sep=0mm,scale=0.8]
\foreach \i in {0,...,7}  \node[vertex] (\i) at (-45.0*\i:3) {$v_\i$};
\foreach \i in {0,...,7}  \draw let \n1={int(mod(\i+1,8))} in (\i) -- (\n1);
\node at (-45.0*0:2.1) {\small $4$};
\node at (-45.0*2:2.1) {\small $4$};
\node (u1) at (-45.0*1:4) {};
\draw (1) -- (u1);
\node at (-45.0*1:2.1) {\small $3$};
\node (u3) at (-45.0*3:4) {};
\draw (3) -- (u3);
\node at (-45.0*3:2.1) {\small $3$};
\node[vertex] (v'4) at (-45.0*4:4.5) {$v'_4$};
\node (u'4) at (-45.0*4:5.5) {};
\node at (-45.0*4-11:4.5) {\small $3$};
\draw (4) -- (v'4);
\draw (v'4) -- (u'4);
\node at (-45.0*4:2.1) {\small $4$};
\node[vertex] (v'5) at (-45.0*5:4.5) {$v'_5$};
\node[vertex] (v''5) at (-45.0*5-7:6) {$v''_5$};
\node[vertex] (v'''5) at (-45.0*5+7:6) {$v'''_5$};
\node (u''5) at (-45.0*5-7:7) {};
\node (u'''5) at (-45.0*5+7:7) {};
\node at (-45.0*5-11:4.5) {\small $4$};
\node at (-45.0*5-14:6) {\small $3$};
\node at (-45.0*5+14:6) {\small $3$};
\draw (5) -- (v'5);
\draw (v'5) -- (v''5);
\draw (v'5) -- (v'''5);
\draw (v''5) -- (u''5);
\draw (v'''5) -- (u'''5);
\node at (-45.0*5:2.1) {\small $6$};
\node[vertex] (v'6) at (-45.0*6:4.5) {$v'_6$};
\node (u'6) at (-45.0*6:5.5) {};
\node at (-45.0*6-11:4.5) {\small $3$};
\draw (6) -- (v'6);
\draw (v'6) -- (u'6);
\node at (-45.0*6:2.1) {\small $4$};
\node (u7) at (-45.0*7:4) {};
\draw (7) -- (u7);
\node at (-45.0*7:2.1) {\small $3$};
\end{tikzpicture}
}
}
\hspace*{0.5cm}
\subfloat[\texttt{1c0a0c0c0c0a0c}]{
\label{subfig:1c0a0c0c0c0a0c}
\scalebox{0.65}{
	\begin{tikzpicture}[join=bevel,vertex/.style={circle,draw, minimum size=0.6cm},inner sep=0mm,scale=0.8]
\foreach \i in {0,...,7}  \node[vertex] (\i) at (-45.0*\i:3) {$v_\i$};
\foreach \i in {0,...,7}  \draw let \n1={int(mod(\i+1,8))} in (\i) -- (\n1);
\node at (-45.0*0:2.1) {\small $6$};
\node[vertex] (v'1) at (-45.0*1:4.5) {$v'_1$};
\node (u'1) at (-45.0*1:5.5) {};
\node at (-45.0*1-11:4.5) {\small $3$};
\draw (1) -- (v'1);
\draw (v'1) -- (u'1);
\node at (-45.0*1:2.1) {\small $4$};
\node (u2) at (-45.0*2:4) {};
\draw (2) -- (u2);
\node at (-45.0*2:2.1) {\small $3$};
\node[vertex] (v'3) at (-45.0*3:4.5) {$v'_3$};
\node (u'3) at (-45.0*3:5.5) {};
\node at (-45.0*3-11:4.5) {\small $3$};
\draw (3) -- (v'3);
\draw (v'3) -- (u'3);
\node at (-45.0*3:2.1) {\small $4$};
\node[vertex] (v'4) at (-45.0*4:4.5) {$v'_4$};
\node (u'4) at (-45.0*4:5.5) {};
\node at (-45.0*4-11:4.5) {\small $3$};
\draw (4) -- (v'4);
\draw (v'4) -- (u'4);
\node at (-45.0*4:2.1) {\small $5$};
\node[vertex] (v'5) at (-45.0*5:4.5) {$v'_5$};
\node (u'5) at (-45.0*5:5.5) {};
\node at (-45.0*5-11:4.5) {\small $3$};
\draw (5) -- (v'5);
\draw (v'5) -- (u'5);
\node at (-45.0*5:2.1) {\small $4$};
\node (u6) at (-45.0*6:4) {};
\draw (6) -- (u6);
\node at (-45.0*6:2.1) {\small $3$};
\node[vertex] (v'7) at (-45.0*7:4.5) {$v'_7$};
\node (u'7) at (-45.0*7:5.5) {};
\node at (-45.0*7-11:4.5) {\small $3$};
\draw (7) -- (v'7);
\draw (v'7) -- (u'7);
\node at (-45.0*7:2.1) {\small $4$};
\end{tikzpicture}
}
}
\subfloat[\texttt{0a0a0c0c0c0c0c0c}]{
\label{subfig:0a0a0c0c0c0c0c0c}
\scalebox{0.65}{
	\begin{tikzpicture}[join=bevel,vertex/.style={circle,draw, minimum size=0.6cm},inner sep=0mm,scale=0.8]
\foreach \i in {0,...,7}  \node[vertex] (\i) at (-45.0*\i:3) {$v_\i$};
\foreach \i in {0,...,7}  \draw let \n1={int(mod(\i+1,8))} in (\i) -- (\n1);
\node (u0) at (-45.0*0:4) {};
\draw (0) -- (u0);
\node at (-45.0*0:2.1) {\small $2$};
\node (u1) at (-45.0*1:4) {};
\draw (1) -- (u1);
\node at (-45.0*1:2.1) {\small $2$};
\node[vertex] (v'2) at (-45.0*2:4.5) {$v'_2$};
\node (u'2) at (-45.0*2:5.5) {};
\node at (-45.0*2-11:4.5) {\small $3$};
\draw (2) -- (v'2);
\draw (v'2) -- (u'2);
\node at (-45.0*2:2.1) {\small $4$};
\node[vertex] (v'3) at (-45.0*3:4.5) {$v'_3$};
\node (u'3) at (-45.0*3:5.5) {};
\node at (-45.0*3-11:4.5) {\small $3$};
\draw (3) -- (v'3);
\draw (v'3) -- (u'3);
\node at (-45.0*3:2.1) {\small $5$};
\node[vertex] (v'4) at (-45.0*4:4.5) {$v'_4$};
\node (u'4) at (-45.0*4:5.5) {};
\node at (-45.0*4-11:4.5) {\small $3$};
\draw (4) -- (v'4);
\draw (v'4) -- (u'4);
\node at (-45.0*4:2.1) {\small $5$};
\node[vertex] (v'5) at (-45.0*5:4.5) {$v'_5$};
\node (u'5) at (-45.0*5:5.5) {};
\node at (-45.0*5-11:4.5) {\small $3$};
\draw (5) -- (v'5);
\draw (v'5) -- (u'5);
\node at (-45.0*5:2.1) {\small $5$};
\node[vertex] (v'6) at (-45.0*6:4.5) {$v'_6$};
\node (u'6) at (-45.0*6:5.5) {};
\node at (-45.0*6-11:4.5) {\small $3$};
\draw (6) -- (v'6);
\draw (v'6) -- (u'6);
\node at (-45.0*6:2.1) {\small $5$};
\node[vertex] (v'7) at (-45.0*7:4.5) {$v'_7$};
\node (u'7) at (-45.0*7:5.5) {};
\node at (-45.0*7-11:4.5) {\small $3$};
\draw (7) -- (v'7);
\draw (v'7) -- (u'7);
\node at (-45.0*7:2.1) {\small $4$};
\end{tikzpicture}
}
}

\subfloat[\texttt{1c1a0a0c0b0a}]{
\label{subfig:1c1a0a0c0b0a}
\scalebox{0.65}{
	\begin{tikzpicture}[join=bevel,vertex/.style={circle,draw, minimum size=0.6cm},inner sep=0mm,scale=0.8]
\foreach \i in {0,...,7}  \node[vertex] (\i) at (-45.0*\i:3) {$v_\i$};
\foreach \i in {0,...,7}  \draw let \n1={int(mod(\i+1,8))} in (\i) -- (\n1);
\node at (-45.0*0:2.1) {\small $5$};
\node at (-45.0*2:2.1) {\small $5$};
\node[vertex] (v'1) at (-45.0*1:4.5) {$v'_1$};
\node (u'1) at (-45.0*1:5.5) {};
\node at (-45.0*1-11:4.5) {\small $3$};
\draw (1) -- (v'1);
\draw (v'1) -- (u'1);
\node at (-45.0*1:2.1) {\small $5$};
\node (u3) at (-45.0*3:4) {};
\draw (3) -- (u3);
\node at (-45.0*3:2.1) {\small $2$};
\node (u4) at (-45.0*4:4) {};
\draw (4) -- (u4);
\node at (-45.0*4:2.1) {\small $2$};
\node[vertex] (v'5) at (-45.0*5:4.5) {$v'_5$};
\node (u'5) at (-45.0*5:5.5) {};
\node at (-45.0*5-11:4.5) {\small $3$};
\draw (5) -- (v'5);
\draw (v'5) -- (u'5);
\node at (-45.0*5:2.1) {\small $4$};
\node[vertex] (v'6) at (-45.0*6:4.5) {$v'_6$};
\node[vertex] (v''6) at (-45.0*6-7:6) {$v''_6$};
\node[vertex] (v'''6) at (-45.0*6+7:6) {$v'''_6$};
\node (u''6) at (-45.0*6-7:7) {};
\node (u'''6) at (-45.0*6+7:7) {};
\node at (-45.0*6-11:4.5) {\small $4$};
\node at (-45.0*6-14:6) {\small $3$};
\node at (-45.0*6+14:6) {\small $3$};
\draw (6) -- (v'6);
\draw (v'6) -- (v''6);
\draw (v'6) -- (v'''6);
\draw (v''6) -- (u''6);
\draw (v'''6) -- (u'''6);
\node at (-45.0*6:2.1) {\small $5$};
\node (u7) at (-45.0*7:4) {};
\draw (7) -- (u7);
\node at (-45.0*7:2.1) {\small $3$};
\end{tikzpicture}
}
}
\subfloat[\texttt{1c1a0b0a0c0a}]{
\label{subfig:1c1a0b0a0c0a}
\scalebox{0.65}{
	\begin{tikzpicture}[join=bevel,vertex/.style={circle,draw, minimum size=0.6cm},inner sep=0mm,scale=0.8]
\foreach \i in {0,...,7}  \node[vertex] (\i) at (-45.0*\i:3) {$v_\i$};
\foreach \i in {0,...,7}  \draw let \n1={int(mod(\i+1,8))} in (\i) -- (\n1);
\node at (-45.0*0:2.1) {\small $5$};
\node at (-45.0*2:2.1) {\small $5$};
\node[vertex] (v'1) at (-45.0*1:4.5) {$v'_1$};
\node (u'1) at (-45.0*1:5.5) {};
\node at (-45.0*1-11:4.5) {\small $3$};
\draw (1) -- (v'1);
\draw (v'1) -- (u'1);
\node at (-45.0*1:2.1) {\small $5$};
\node (u3) at (-45.0*3:4) {};
\draw (3) -- (u3);
\node at (-45.0*3:2.1) {\small $3$};
\node[vertex] (v'4) at (-45.0*4:4.5) {$v'_4$};
\node[vertex] (v''4) at (-45.0*4-7:6) {$v''_4$};
\node[vertex] (v'''4) at (-45.0*4+7:6) {$v'''_4$};
\node (u''4) at (-45.0*4-7:7) {};
\node (u'''4) at (-45.0*4+7:7) {};
\node at (-45.0*4-11:4.5) {\small $4$};
\node at (-45.0*4-14:6) {\small $3$};
\node at (-45.0*4+14:6) {\small $3$};
\draw (4) -- (v'4);
\draw (v'4) -- (v''4);
\draw (v'4) -- (v'''4);
\draw (v''4) -- (u''4);
\draw (v'''4) -- (u'''4);
\node at (-45.0*4:2.1) {\small $4$};
\node (u5) at (-45.0*5:4) {};
\draw (5) -- (u5);
\node at (-45.0*5:2.1) {\small $3$};
\node[vertex] (v'6) at (-45.0*6:4.5) {$v'_6$};
\node (u'6) at (-45.0*6:5.5) {};
\node at (-45.0*6-11:4.5) {\small $3$};
\draw (6) -- (v'6);
\draw (v'6) -- (u'6);
\node at (-45.0*6:2.1) {\small $3$};
\node (u7) at (-45.0*7:4) {};
\draw (7) -- (u7);
\node at (-45.0*7:2.1) {\small $3$};
\end{tikzpicture}
}
}
\subfloat[\texttt{1c1a0a0b0c0a}]{
\label{subfig:1c1a0a0b0c0a}
\scalebox{0.65}{
	\begin{tikzpicture}[join=bevel,vertex/.style={circle,draw, minimum size=0.6cm},inner sep=0mm,scale=0.8]
\foreach \i in {0,...,7}  \node[vertex] (\i) at (-45.0*\i:3) {$v_\i$};
\foreach \i in {0,...,7}  \draw let \n1={int(mod(\i+1,8))} in (\i) -- (\n1);
\node at (-45.0*0:2.1) {\small $5$};
\node at (-45.0*2:2.1) {\small $5$};
\node[vertex] (v'1) at (-45.0*1:4.5) {$v'_1$};
\node (u'1) at (-45.0*1:5.5) {};
\node at (-45.0*1-11:4.5) {\small $3$};
\draw (1) -- (v'1);
\draw (v'1) -- (u'1);
\node at (-45.0*1:2.1) {\small $5$};
\node (u3) at (-45.0*3:4) {};
\draw (3) -- (u3);
\node at (-45.0*3:2.1) {\small $2$};
\node (u4) at (-45.0*4:4) {};
\draw (4) -- (u4);
\node at (-45.0*4:2.1) {\small $2$};
\node[vertex] (v'5) at (-45.0*5:4.5) {$v'_5$};
\node[vertex] (v''5) at (-45.0*5-7:6) {$v''_5$};
\node[vertex] (v'''5) at (-45.0*5+7:6) {$v'''_5$};
\node (u''5) at (-45.0*5-7:7) {};
\node (u'''5) at (-45.0*5+7:7) {};
\node at (-45.0*5-11:4.5) {\small $4$};
\node at (-45.0*5-14:6) {\small $3$};
\node at (-45.0*5+14:6) {\small $3$};
\draw (5) -- (v'5);
\draw (v'5) -- (v''5);
\draw (v'5) -- (v'''5);
\draw (v''5) -- (u''5);
\draw (v'''5) -- (u'''5);
\node at (-45.0*5:2.1) {\small $5$};
\node[vertex] (v'6) at (-45.0*6:4.5) {$v'_6$};
\node (u'6) at (-45.0*6:5.5) {};
\node at (-45.0*6-11:4.5) {\small $3$};
\draw (6) -- (v'6);
\draw (v'6) -- (u'6);
\node at (-45.0*6:2.1) {\small $4$};
\node (u7) at (-45.0*7:4) {};
\draw (7) -- (u7);
\node at (-45.0*7:2.1) {\small $3$};
\end{tikzpicture}
}
}

\subfloat[\texttt{0a0a0c0b0c0c0c0c}]{
\label{subfig:0a0a0c0b0c0c0c0c}
\scalebox{0.65}{
	\begin{tikzpicture}[join=bevel,vertex/.style={circle,draw, minimum size=0.6cm},inner sep=0mm,scale=0.8]
\foreach \i in {0,...,7}  \node[vertex] (\i) at (-45.0*\i:3) {$v_\i$};
\foreach \i in {0,...,7}  \draw let \n1={int(mod(\i+1,8))} in (\i) -- (\n1);
\node (u0) at (-45.0*0:4) {};
\draw (0) -- (u0);
\node at (-45.0*0:2.1) {\small $2$};
\node (u1) at (-45.0*1:4) {};
\draw (1) -- (u1);
\node at (-45.0*1:2.1) {\small $2$};
\node[vertex] (v'2) at (-45.0*2:4.5) {$v'_2$};
\node (u'2) at (-45.0*2:5.5) {};
\node at (-45.0*2-11:4.5) {\small $3$};
\draw (2) -- (v'2);
\draw (v'2) -- (u'2);
\node at (-45.0*2:2.1) {\small $4$};
\node[vertex] (v'3) at (-45.0*3:4.5) {$v'_3$};
\node[vertex] (v''3) at (-45.0*3-7:6) {$v''_3$};
\node[vertex] (v'''3) at (-45.0*3+7:6) {$v'''_3$};
\node (u''3) at (-45.0*3-7:7) {};
\node (u'''3) at (-45.0*3+7:7) {};
\node at (-45.0*3-11:4.5) {\small $4$};
\node at (-45.0*3-14:6) {\small $3$};
\node at (-45.0*3+14:6) {\small $3$};
\draw (3) -- (v'3);
\draw (v'3) -- (v''3);
\draw (v'3) -- (v'''3);
\draw (v''3) -- (u''3);
\draw (v'''3) -- (u'''3);
\node at (-45.0*3:2.1) {\small $6$};
\node[vertex] (v'4) at (-45.0*4:4.5) {$v'_4$};
\node (u'4) at (-45.0*4:5.5) {};
\node at (-45.0*4-11:4.5) {\small $3$};
\draw (4) -- (v'4);
\draw (v'4) -- (u'4);
\node at (-45.0*4:2.1) {\small $5$};
\node[vertex] (v'5) at (-45.0*5:4.5) {$v'_5$};
\node (u'5) at (-45.0*5:5.5) {};
\node at (-45.0*5-11:4.5) {\small $3$};
\draw (5) -- (v'5);
\draw (v'5) -- (u'5);
\node at (-45.0*5:2.1) {\small $5$};
\node[vertex] (v'6) at (-45.0*6:4.5) {$v'_6$};
\node (u'6) at (-45.0*6:5.5) {};
\node at (-45.0*6-11:4.5) {\small $3$};
\draw (6) -- (v'6);
\draw (v'6) -- (u'6);
\node at (-45.0*6:2.1) {\small $5$};
\node[vertex] (v'7) at (-45.0*7:4.5) {$v'_7$};
\node (u'7) at (-45.0*7:5.5) {};
\node at (-45.0*7-11:4.5) {\small $3$};
\draw (7) -- (v'7);
\draw (v'7) -- (u'7);
\node at (-45.0*7:2.1) {\small $4$};
\end{tikzpicture}
}
}
\subfloat[\texttt{0a0c0c0c0b0c0c0c}]{
\label{subfig:0a0c0c0c0b0c0c0c}
\scalebox{0.65}{
	\begin{tikzpicture}[join=bevel,vertex/.style={circle,draw, minimum size=0.6cm},inner sep=0mm,scale=0.8]
\foreach \i in {0,...,7}  \node[vertex] (\i) at (-45.0*\i:3) {$v_\i$};
\foreach \i in {0,...,7}  \draw let \n1={int(mod(\i+1,8))} in (\i) -- (\n1);
\node (u0) at (-45.0*0:4) {};
\draw (0) -- (u0);
\node at (-45.0*0:2.1) {\small $3$};
\node[vertex] (v'1) at (-45.0*1:4.5) {$v'_1$};
\node (u'1) at (-45.0*1:5.5) {};
\node at (-45.0*1-11:4.5) {\small $3$};
\draw (1) -- (v'1);
\draw (v'1) -- (u'1);
\node at (-45.0*1:2.1) {\small $4$};
\node[vertex] (v'2) at (-45.0*2:4.5) {$v'_2$};
\node (u'2) at (-45.0*2:5.5) {};
\node at (-45.0*2-11:4.5) {\small $3$};
\draw (2) -- (v'2);
\draw (v'2) -- (u'2);
\node at (-45.0*2:2.1) {\small $5$};
\node[vertex] (v'3) at (-45.0*3:4.5) {$v'_3$};
\node (u'3) at (-45.0*3:5.5) {};
\node at (-45.0*3-11:4.5) {\small $3$};
\draw (3) -- (v'3);
\draw (v'3) -- (u'3);
\node at (-45.0*3:2.1) {\small $5$};
\node[vertex] (v'4) at (-45.0*4:4.5) {$v'_4$};
\node[vertex] (v''4) at (-45.0*4-7:6) {$v''_4$};
\node[vertex] (v'''4) at (-45.0*4+7:6) {$v'''_4$};
\node (u''4) at (-45.0*4-7:7) {};
\node (u'''4) at (-45.0*4+7:7) {};
\node at (-45.0*4-11:4.5) {\small $4$};
\node at (-45.0*4-14:6) {\small $3$};
\node at (-45.0*4+14:6) {\small $3$};
\draw (4) -- (v'4);
\draw (v'4) -- (v''4);
\draw (v'4) -- (v'''4);
\draw (v''4) -- (u''4);
\draw (v'''4) -- (u'''4);
\node at (-45.0*4:2.1) {\small $6$};
\node[vertex] (v'5) at (-45.0*5:4.5) {$v'_5$};
\node (u'5) at (-45.0*5:5.5) {};
\node at (-45.0*5-11:4.5) {\small $3$};
\draw (5) -- (v'5);
\draw (v'5) -- (u'5);
\node at (-45.0*5:2.1) {\small $5$};
\node[vertex] (v'6) at (-45.0*6:4.5) {$v'_6$};
\node (u'6) at (-45.0*6:5.5) {};
\node at (-45.0*6-11:4.5) {\small $3$};
\draw (6) -- (v'6);
\draw (v'6) -- (u'6);
\node at (-45.0*6:2.1) {\small $5$};
\node[vertex] (v'7) at (-45.0*7:4.5) {$v'_7$};
\node (u'7) at (-45.0*7:5.5) {};
\node at (-45.0*7-11:4.5) {\small $3$};
\draw (7) -- (v'7);
\draw (v'7) -- (u'7);
\node at (-45.0*7:2.1) {\small $4$};
\end{tikzpicture}
}
}
\subfloat[\texttt{1a0c0c0a0c0b0c}]{
\label{subfig:1a0c0c0a0c0b0c}
\scalebox{0.65}{
	\begin{tikzpicture}[join=bevel,vertex/.style={circle,draw, minimum size=0.6cm},inner sep=0mm,scale=0.8]
\foreach \i in {0,...,7}  \node[vertex] (\i) at (-45.0*\i:3) {$v_\i$};
\foreach \i in {0,...,7}  \draw let \n1={int(mod(\i+1,8))} in (\i) -- (\n1);
\node at (-45.0*0:2.1) {\small $5$};
\node (u1) at (-45.0*1:4) {};
\draw (1) -- (u1);
\node at (-45.0*1:2.1) {\small $3$};
\node[vertex] (v'2) at (-45.0*2:4.5) {$v'_2$};
\node (u'2) at (-45.0*2:5.5) {};
\node at (-45.0*2-11:4.5) {\small $3$};
\draw (2) -- (v'2);
\draw (v'2) -- (u'2);
\node at (-45.0*2:2.1) {\small $4$};
\node[vertex] (v'3) at (-45.0*3:4.5) {$v'_3$};
\node (u'3) at (-45.0*3:5.5) {};
\node at (-45.0*3-11:4.5) {\small $3$};
\draw (3) -- (v'3);
\draw (v'3) -- (u'3);
\node at (-45.0*3:2.1) {\small $4$};
\node (u4) at (-45.0*4:4) {};
\draw (4) -- (u4);
\node at (-45.0*4:2.1) {\small $3$};
\node[vertex] (v'5) at (-45.0*5:4.5) {$v'_5$};
\node (u'5) at (-45.0*5:5.5) {};
\node at (-45.0*5-11:4.5) {\small $3$};
\draw (5) -- (v'5);
\draw (v'5) -- (u'5);
\node at (-45.0*5:2.1) {\small $4$};
\node[vertex] (v'6) at (-45.0*6:4.5) {$v'_6$};
\node[vertex] (v''6) at (-45.0*6-7:6) {$v''_6$};
\node[vertex] (v'''6) at (-45.0*6+7:6) {$v'''_6$};
\node (u''6) at (-45.0*6-7:7) {};
\node (u'''6) at (-45.0*6+7:7) {};
\node at (-45.0*6-11:4.5) {\small $4$};
\node at (-45.0*6-14:6) {\small $3$};
\node at (-45.0*6+14:6) {\small $3$};
\draw (6) -- (v'6);
\draw (v'6) -- (v''6);
\draw (v'6) -- (v'''6);
\draw (v''6) -- (u''6);
\draw (v'''6) -- (u'''6);
\node at (-45.0*6:2.1) {\small $6$};
\node[vertex] (v'7) at (-45.0*7:4.5) {$v'_7$};
\node (u'7) at (-45.0*7:5.5) {};
\node at (-45.0*7-11:4.5) {\small $3$};
\draw (7) -- (v'7);
\draw (v'7) -- (u'7);
\node at (-45.0*7:2.1) {\small $5$};
\end{tikzpicture}
}
}

\phantomcaption
\end{figure}

\begin{figure}[H]
\ContinuedFloat

\hspace*{-1.5cm}
\subfloat[\texttt{1a0b0c0c0c0a0c}]{
\label{subfig:1a0b0c0c0c0a0c}
\scalebox{0.65}{
	\begin{tikzpicture}[join=bevel,vertex/.style={circle,draw, minimum size=0.6cm},inner sep=0mm,scale=0.8]
\foreach \i in {0,...,7}  \node[vertex] (\i) at (-45.0*\i:3) {$v_\i$};
\foreach \i in {0,...,7}  \draw let \n1={int(mod(\i+1,8))} in (\i) -- (\n1);
\node at (-45.0*0:2.1) {\small $5$};
\node (u1) at (-45.0*1:4) {};
\draw (1) -- (u1);
\node at (-45.0*1:2.1) {\small $3$};
\node[vertex] (v'2) at (-45.0*2:4.5) {$v'_2$};
\node[vertex] (v''2) at (-45.0*2-7:6) {$v''_2$};
\node[vertex] (v'''2) at (-45.0*2+7:6) {$v'''_2$};
\node (u''2) at (-45.0*2-7:7) {};
\node (u'''2) at (-45.0*2+7:7) {};
\node at (-45.0*2-11:4.5) {\small $4$};
\node at (-45.0*2-14:6) {\small $3$};
\node at (-45.0*2+14:6) {\small $3$};
\draw (2) -- (v'2);
\draw (v'2) -- (v''2);
\draw (v'2) -- (v'''2);
\draw (v''2) -- (u''2);
\draw (v'''2) -- (u'''2);
\node at (-45.0*2:2.1) {\small $5$};
\node[vertex] (v'3) at (-45.0*3:4.5) {$v'_3$};
\node (u'3) at (-45.0*3:5.5) {};
\node at (-45.0*3-11:4.5) {\small $3$};
\draw (3) -- (v'3);
\draw (v'3) -- (u'3);
\node at (-45.0*3:2.1) {\small $5$};
\node[vertex] (v'4) at (-45.0*4:4.5) {$v'_4$};
\node (u'4) at (-45.0*4:5.5) {};
\node at (-45.0*4-11:4.5) {\small $3$};
\draw (4) -- (v'4);
\draw (v'4) -- (u'4);
\node at (-45.0*4:2.1) {\small $5$};
\node[vertex] (v'5) at (-45.0*5:4.5) {$v'_5$};
\node (u'5) at (-45.0*5:5.5) {};
\node at (-45.0*5-11:4.5) {\small $3$};
\draw (5) -- (v'5);
\draw (v'5) -- (u'5);
\node at (-45.0*5:2.1) {\small $4$};
\node (u6) at (-45.0*6:4) {};
\draw (6) -- (u6);
\node at (-45.0*6:2.1) {\small $3$};
\node[vertex] (v'7) at (-45.0*7:4.5) {$v'_7$};
\node (u'7) at (-45.0*7:5.5) {};
\node at (-45.0*7-11:4.5) {\small $3$};
\draw (7) -- (v'7);
\draw (v'7) -- (u'7);
\node at (-45.0*7:2.1) {\small $4$};
\end{tikzpicture}
}
}
\subfloat[\texttt{1a0c0b0c0c0a0c}]{
\label{subfig:1a0c0b0c0c0a0c}
\scalebox{0.65}{
	\begin{tikzpicture}[join=bevel,vertex/.style={circle,draw, minimum size=0.6cm},inner sep=0mm,scale=0.8]
\foreach \i in {0,...,7}  \node[vertex] (\i) at (-45.0*\i:3) {$v_\i$};
\foreach \i in {0,...,7}  \draw let \n1={int(mod(\i+1,8))} in (\i) -- (\n1);
\node at (-45.0*0:2.1) {\small $5$};
\node (u1) at (-45.0*1:4) {};
\draw (1) -- (u1);
\node at (-45.0*1:2.1) {\small $3$};
\node[vertex] (v'2) at (-45.0*2:4.5) {$v'_2$};
\node (u'2) at (-45.0*2:5.5) {};
\node at (-45.0*2-11:4.5) {\small $3$};
\draw (2) -- (v'2);
\draw (v'2) -- (u'2);
\node at (-45.0*2:2.1) {\small $45$};
\node[vertex] (v'3) at (-45.0*3:4.5) {$v'_3$};
\node[vertex] (v''3) at (-45.0*3-7:6) {$v''_3$};
\node[vertex] (v'''3) at (-45.0*3+7:6) {$v'''_3$};
\node (u''3) at (-45.0*3-7:7) {};
\node (u'''3) at (-45.0*3+7:7) {};
\node at (-45.0*3-11:4.5) {\small $45$};
\node at (-45.0*3-14:6) {\small $3$};
\node at (-45.0*3+14:6) {\small $3$};
\draw (3) -- (v'3);
\draw (v'3) -- (v''3);
\draw (v'3) -- (v'''3);
\draw (v''3) -- (u''3);
\draw (v'''3) -- (u'''3);
\node at (-45.0*3:2.1) {\small $6$};
\node[vertex] (v'4) at (-45.0*4:4.5) {$v'_4$};
\node (u'4) at (-45.0*4:5.5) {};
\node at (-45.0*4-11:4.5) {\small $3$};
\draw (4) -- (v'4);
\draw (v'4) -- (u'4);
\node at (-45.0*4:2.1) {\small $5$};
\node[vertex] (v'5) at (-45.0*5:4.5) {$v'_5$};
\node (u'5) at (-45.0*5:5.5) {};
\node at (-45.0*5-11:4.5) {\small $3$};
\draw (5) -- (v'5);
\draw (v'5) -- (u'5);
\node at (-45.0*5:2.1) {\small $4$};
\node (u6) at (-45.0*6:4) {};
\draw (6) -- (u6);
\node at (-45.0*6:2.1) {\small $3$};
\node[vertex] (v'7) at (-45.0*7:4.5) {$v'_7$};
\node (u'7) at (-45.0*7:5.5) {};
\node at (-45.0*7-11:4.5) {\small $3$};
\draw (7) -- (v'7);
\draw (v'7) -- (u'7);
\node at (-45.0*7:2.1) {\small $4$};
\end{tikzpicture}
}
}
\subfloat[\texttt{1a0c0c0c0c0a0b}]{
\label{subfig:1a0c0c0c0c0a0b}
\scalebox{0.65}{
	\begin{tikzpicture}[join=bevel,vertex/.style={circle,draw, minimum size=0.6cm},inner sep=0mm,scale=0.8]
\foreach \i in {0,...,7}  \node[vertex] (\i) at (-45.0*\i:3) {$v_\i$};
\foreach \i in {0,...,7}  \draw let \n1={int(mod(\i+1,8))} in (\i) -- (\n1);
\node at (-45.0*0:2.1) {\small $5$};
\node (u1) at (-45.0*1:4) {};
\draw (1) -- (u1);
\node at (-45.0*1:2.1) {\small $3$};
\node[vertex] (v'2) at (-45.0*2:4.5) {$v'_2$};
\node (u'2) at (-45.0*2:5.5) {};
\node at (-45.0*2-11:4.5) {\small $3$};
\draw (2) -- (v'2);
\draw (v'2) -- (u'2);
\node at (-45.0*2:2.1) {\small $4$};
\node[vertex] (v'3) at (-45.0*3:4.5) {$v'_3$};
\node (u'3) at (-45.0*3:5.5) {};
\node at (-45.0*3-11:4.5) {\small $3$};
\draw (3) -- (v'3);
\draw (v'3) -- (u'3);
\node at (-45.0*3:2.1) {\small $5$};
\node[vertex] (v'4) at (-45.0*4:4.5) {$v'_4$};
\node (u'4) at (-45.0*4:5.5) {};
\node at (-45.0*4-11:4.5) {\small $3$};
\draw (4) -- (v'4);
\draw (v'4) -- (u'4);
\node at (-45.0*4:2.1) {\small $5$};
\node[vertex] (v'5) at (-45.0*5:4.5) {$v'_5$};
\node (u'5) at (-45.0*5:5.5) {};
\node at (-45.0*5-11:4.5) {\small $3$};
\draw (5) -- (v'5);
\draw (v'5) -- (u'5);
\node at (-45.0*5:2.1) {\small $4$};
\node (u6) at (-45.0*6:4) {};
\draw (6) -- (u6);
\node at (-45.0*6:2.1) {\small $3$};
\node[vertex] (v'7) at (-45.0*7:4.5) {$v'_7$};
\node[vertex] (v''7) at (-45.0*7-7:6) {$v''_7$};
\node[vertex] (v'''7) at (-45.0*7+7:6) {$v'''_7$};
\node (u''7) at (-45.0*7-7:7) {};
\node (u'''7) at (-45.0*7+7:7) {};
\node at (-45.0*7-11:4.5) {\small $4$};
\node at (-45.0*7-14:6) {\small $3$};
\node at (-45.0*7+14:6) {\small $3$};
\draw (7) -- (v'7);
\draw (v'7) -- (v''7);
\draw (v'7) -- (v'''7);
\draw (v''7) -- (u''7);
\draw (v'''7) -- (u'''7);
\node at (-45.0*7:2.1) {\small $5$};
\end{tikzpicture}
}
}

\hspace*{-1.5cm}
\subfloat[\texttt{1a0c0c0b0c0a0c}]{
\label{subfig:1a0c0c0b0c0a0c}
\scalebox{0.65}{
	\begin{tikzpicture}[join=bevel,vertex/.style={circle,draw, minimum size=0.6cm},inner sep=0mm,scale=0.8]
\foreach \i in {0,...,7}  \node[vertex] (\i) at (-45.0*\i:3) {$v_\i$};
\foreach \i in {0,...,7}  \draw let \n1={int(mod(\i+1,8))} in (\i) -- (\n1);
\node at (-45.0*0:2.1) {\small $5$};
\node (u1) at (-45.0*1:4) {};
\draw (1) -- (u1);
\node at (-45.0*1:2.1) {\small $3$};
\node[vertex] (v'2) at (-45.0*2:4.5) {$v'_2$};
\node (u'2) at (-45.0*2:5.5) {};
\node at (-45.0*2-11:4.5) {\small $3$};
\draw (2) -- (v'2);
\draw (v'2) -- (u'2);
\node at (-45.0*2:2.1) {\small $4$};
\node[vertex] (v'3) at (-45.0*3:4.5) {$v'_3$};
\node (u'3) at (-45.0*3:5.5) {};
\node at (-45.0*3-11:4.5) {\small $3$};
\draw (3) -- (v'3);
\draw (v'3) -- (u'3);
\node at (-45.0*3:2.1) {\small $5$};
\node[vertex] (v'4) at (-45.0*4:4.5) {$v'_4$};
\node[vertex] (v''4) at (-45.0*4-7:6) {$v''_4$};
\node[vertex] (v'''4) at (-45.0*4+7:6) {$v'''_4$};
\node (u''4) at (-45.0*4-7:7) {};
\node (u'''4) at (-45.0*4+7:7) {};
\node at (-45.0*4-11:4.5) {\small $4$};
\node at (-45.0*4-14:6) {\small $3$};
\node at (-45.0*4+14:6) {\small $3$};
\draw (4) -- (v'4);
\draw (v'4) -- (v''4);
\draw (v'4) -- (v'''4);
\draw (v''4) -- (u''4);
\draw (v'''4) -- (u'''4);
\node at (-45.0*4:2.1) {\small $6$};
\node[vertex] (v'5) at (-45.0*5:4.5) {$v'_5$};
\node (u'5) at (-45.0*5:5.5) {};
\node at (-45.0*5-11:4.5) {\small $3$};
\draw (5) -- (v'5);
\draw (v'5) -- (u'5);
\node at (-45.0*5:2.1) {\small $4$};
\node (u6) at (-45.0*6:4) {};
\draw (6) -- (u6);
\node at (-45.0*6:2.1) {\small $3$};
\node[vertex] (v'7) at (-45.0*7:4.5) {$v'_7$};
\node (u'7) at (-45.0*7:5.5) {};
\node at (-45.0*7-11:4.5) {\small $3$};
\draw (7) -- (v'7);
\draw (v'7) -- (u'7);
\node at (-45.0*7:2.1) {\small $4$};
\end{tikzpicture}
}
}
\subfloat[\texttt{1a0c0b0c0c0c0a}]{
\label{subfig:1a0c0b0c0c0c0a}
\scalebox{0.65}{
	\begin{tikzpicture}[join=bevel,vertex/.style={circle,draw, minimum size=0.6cm},inner sep=0mm,scale=0.8]
\foreach \i in {0,...,7}  \node[vertex] (\i) at (-45.0*\i:3) {$v_\i$};
\foreach \i in {0,...,7}  \draw let \n1={int(mod(\i+1,8))} in (\i) -- (\n1);
\node at (-45.0*0:2.1) {\small $4$};
\node (u1) at (-45.0*1:4) {};
\draw (1) -- (u1);
\node at (-45.0*1:2.1) {\small $3$};
\node[vertex] (v'2) at (-45.0*2:4.5) {$v'_2$};
\node (u'2) at (-45.0*2:5.5) {};
\node at (-45.0*2-11:4.5) {\small $3$};
\draw (2) -- (v'2);
\draw (v'2) -- (u'2);
\node at (-45.0*2:2.1) {\small $4$};
\node[vertex] (v'3) at (-45.0*3:4.5) {$v'_3$};
\node[vertex] (v''3) at (-45.0*3-7:6) {$v''_3$};
\node[vertex] (v'''3) at (-45.0*3+7:6) {$v'''_3$};
\node (u''3) at (-45.0*3-7:7) {};
\node (u'''3) at (-45.0*3+7:7) {};
\node at (-45.0*3-11:4.5) {\small $4$};
\node at (-45.0*3-14:6) {\small $3$};
\node at (-45.0*3+14:6) {\small $3$};
\draw (3) -- (v'3);
\draw (v'3) -- (v''3);
\draw (v'3) -- (v'''3);
\draw (v''3) -- (u''3);
\draw (v'''3) -- (u'''3);
\node at (-45.0*3:2.1) {\small $6$};
\node[vertex] (v'4) at (-45.0*4:4.5) {$v'_4$};
\node (u'4) at (-45.0*4:5.5) {};
\node at (-45.0*4-11:4.5) {\small $3$};
\draw (4) -- (v'4);
\draw (v'4) -- (u'4);
\node at (-45.0*4:2.1) {\small $5$};
\node[vertex] (v'5) at (-45.0*5:4.5) {$v'_5$};
\node (u'5) at (-45.0*5:5.5) {};
\node at (-45.0*5-11:4.5) {\small $3$};
\draw (5) -- (v'5);
\draw (v'5) -- (u'5);
\node at (-45.0*5:2.1) {\small $5$};
\node[vertex] (v'6) at (-45.0*6:4.5) {$v'_6$};
\node (u'6) at (-45.0*6:5.5) {};
\node at (-45.0*6-11:4.5) {\small $3$};
\draw (6) -- (v'6);
\draw (v'6) -- (u'6);
\node at (-45.0*6:2.1) {\small $4$};
\node (u7) at (-45.0*7:4) {};
\draw (7) -- (u7);
\node at (-45.0*7:2.1) {\small $3$};
\end{tikzpicture}
}
}
\subfloat[\texttt{1a0c0c0b0c0c0a}]{
\label{subfig:1a0c0c0b0c0c0a}
\scalebox{0.65}{
	\begin{tikzpicture}[join=bevel,vertex/.style={circle,draw, minimum size=0.6cm},inner sep=0mm,scale=0.8]
\foreach \i in {0,...,7}  \node[vertex] (\i) at (-45.0*\i:3) {$v_\i$};
\foreach \i in {0,...,7}  \draw let \n1={int(mod(\i+1,8))} in (\i) -- (\n1);
\node at (-45.0*0:2.1) {\small $4$};
\node (u1) at (-45.0*1:4) {};
\draw (1) -- (u1);
\node at (-45.0*1:2.1) {\small $3$};
\node[vertex] (v'2) at (-45.0*2:4.5) {$v'_2$};
\node (u'2) at (-45.0*2:5.5) {};
\node at (-45.0*2-11:4.5) {\small $3$};
\draw (2) -- (v'2);
\draw (v'2) -- (u'2);
\node at (-45.0*2:2.1) {\small $4$};
\node[vertex] (v'3) at (-45.0*3:4.5) {$v'_3$};
\node (u'3) at (-45.0*3:5.5) {};
\node at (-45.0*3-11:4.5) {\small $3$};
\draw (3) -- (v'3);
\draw (v'3) -- (u'3);
\node at (-45.0*3:2.1) {\small $5$};
\node[vertex] (v'4) at (-45.0*4:4.5) {$v'_4$};
\node[vertex] (v''4) at (-45.0*4-7:6) {$v''_4$};
\node[vertex] (v'''4) at (-45.0*4+7:6) {$v'''_4$};
\node (u''4) at (-45.0*4-7:7) {};
\node (u'''4) at (-45.0*4+7:7) {};
\node at (-45.0*4-11:4.5) {\small $4$};
\node at (-45.0*4-14:6) {\small $3$};
\node at (-45.0*4+14:6) {\small $3$};
\draw (4) -- (v'4);
\draw (v'4) -- (v''4);
\draw (v'4) -- (v'''4);
\draw (v''4) -- (u''4);
\draw (v'''4) -- (u'''4);
\node at (-45.0*4:2.1) {\small $6$};
\node[vertex] (v'5) at (-45.0*5:4.5) {$v'_5$};
\node (u'5) at (-45.0*5:5.5) {};
\node at (-45.0*5-11:4.5) {\small $3$};
\draw (5) -- (v'5);
\draw (v'5) -- (u'5);
\node at (-45.0*5:2.1) {\small $5$};
\node[vertex] (v'6) at (-45.0*6:4.5) {$v'_6$};
\node (u'6) at (-45.0*6:5.5) {};
\node at (-45.0*6-11:4.5) {\small $3$};
\draw (6) -- (v'6);
\draw (v'6) -- (u'6);
\node at (-45.0*6:2.1) {\small $4$};
\node (u7) at (-45.0*7:4) {};
\draw (7) -- (u7);
\node at (-45.0*7:2.1) {\small $3$};
\end{tikzpicture}
}
}

\subfloat[\texttt{1a0a1c0b0b0c}]{
\label{subfig:1a0a1c0b0b0c}
\scalebox{0.65}{
	\begin{tikzpicture}[join=bevel,vertex/.style={circle,draw, minimum size=0.6cm},inner sep=0mm,scale=0.8]
\foreach \i in {0,...,7}  \node[vertex] (\i) at (-45.0*\i:3) {$v_\i$};
\foreach \i in {0,...,7}  \draw let \n1={int(mod(\i+1,8))} in (\i) -- (\n1);
\node at (-45.0*0:2.1) {\small $5$};
\node at (-45.0*3:2.1) {\small $5$};
\node (u1) at (-45.0*1:4) {};
\draw (1) -- (u1);
\node at (-45.0*1:2.1) {\small $2$};
\node (u2) at (-45.0*2:4) {};
\draw (2) -- (u2);
\node at (-45.0*2:2.1) {\small $2$};
\node[vertex] (v'4) at (-45.0*4:4.5) {$v'_4$};
\node (u'4) at (-45.0*4:5.5) {};
\node at (-45.0*4-11:4.5) {\small $3$};
\draw (4) -- (v'4);
\draw (v'4) -- (u'4);
\node at (-45.0*4:2.1) {\small $5$};
\node[vertex] (v'5) at (-45.0*5:4.5) {$v'_5$};
\node[vertex] (v''5) at (-45.0*5-7:6) {$v''_5$};
\node[vertex] (v'''5) at (-45.0*5+7:6) {$v'''_5$};
\node (u''5) at (-45.0*5-7:7) {};
\node (u'''5) at (-45.0*5+7:7) {};
\node at (-45.0*5-11:4.5) {\small $4$};
\node at (-45.0*5-14:6) {\small $3$};
\node at (-45.0*5+14:6) {\small $3$};
\draw (5) -- (v'5);
\draw (v'5) -- (v''5);
\draw (v'5) -- (v'''5);
\draw (v''5) -- (u''5);
\draw (v'''5) -- (u'''5);
\node at (-45.0*5:2.1) {\small $6$};
\node[vertex] (v'6) at (-45.0*6:4.5) {$v'_6$};
\node[vertex] (v''6) at (-45.0*6-7:6) {$v''_6$};
\node[vertex] (v'''6) at (-45.0*6+7:6) {$v'''_6$};
\node (u''6) at (-45.0*6-7:7) {};
\node (u'''6) at (-45.0*6+7:7) {};
\node at (-45.0*6-11:4.5) {\small $4$};
\node at (-45.0*6-14:6) {\small $3$};
\node at (-45.0*6+14:6) {\small $3$};
\draw (6) -- (v'6);
\draw (v'6) -- (v''6);
\draw (v'6) -- (v'''6);
\draw (v''6) -- (u''6);
\draw (v'''6) -- (u'''6);
\node at (-45.0*6:2.1) {\small $6$};
\node[vertex] (v'7) at (-45.0*7:4.5) {$v'_7$};
\node (u'7) at (-45.0*7:5.5) {};
\node at (-45.0*7-11:4.5) {\small $3$};
\draw (7) -- (v'7);
\draw (v'7) -- (u'7);
\node at (-45.0*7:2.1) {\small $5$};
\end{tikzpicture}
}
}
\subfloat[\texttt{1a0c1a0c0a0b}]{
\label{subfig:1a0c1a0c0a0b}
\scalebox{0.65}{
	\begin{tikzpicture}[join=bevel,vertex/.style={circle,draw, minimum size=0.6cm},inner sep=0mm,scale=0.8]
\foreach \i in {0,...,7}  \node[vertex] (\i) at (-45.0*\i:3) {$v_\i$};
\foreach \i in {0,...,7}  \draw let \n1={int(mod(\i+1,8))} in (\i) -- (\n1);
\node at (-45.0*0:2.1) {\small $5$};
\node at (-45.0*3:2.1) {\small $5$};
\node (u1) at (-45.0*1:4) {};
\draw (1) -- (u1);
\node at (-45.0*1:2.1) {\small $3$};
\node[vertex] (v'2) at (-45.0*2:4.5) {$v'_2$};
\node (u'2) at (-45.0*2:5.5) {};
\node at (-45.0*2-11:4.5) {\small $3$};
\draw (2) -- (v'2);
\draw (v'2) -- (u'2);
\node at (-45.0*2:2.1) {\small $4$};
\node (u4) at (-45.0*4:4) {};
\draw (4) -- (u4);
\node at (-45.0*4:2.1) {\small $3$};
\node[vertex] (v'5) at (-45.0*5:4.5) {$v'_5$};
\node (u'5) at (-45.0*5:5.5) {};
\node at (-45.0*5-11:4.5) {\small $3$};
\draw (5) -- (v'5);
\draw (v'5) -- (u'5);
\node at (-45.0*5:2.1) {\small $3$};
\node (u6) at (-45.0*6:4) {};
\draw (6) -- (u6);
\node at (-45.0*6:2.1) {\small $3$};
\node[vertex] (v'7) at (-45.0*7:4.5) {$v'_7$};
\node[vertex] (v''7) at (-45.0*7-7:6) {$v''_7$};
\node[vertex] (v'''7) at (-45.0*7+7:6) {$v'''_7$};
\node (u''7) at (-45.0*7-7:7) {};
\node (u'''7) at (-45.0*7+7:7) {};
\node at (-45.0*7-11:4.5) {\small $4$};
\node at (-45.0*7-14:6) {\small $3$};
\node at (-45.0*7+14:6) {\small $3$};
\draw (7) -- (v'7);
\draw (v'7) -- (v''7);
\draw (v'7) -- (v'''7);
\draw (v''7) -- (u''7);
\draw (v'''7) -- (u'''7);
\node at (-45.0*7:2.1) {\small $5$};
\end{tikzpicture}
}
}
\subfloat[\texttt{1a0c1a0b0c0a}]{
\label{subfig:1a0c1a0b0c0a}
\scalebox{0.65}{
	\begin{tikzpicture}[join=bevel,vertex/.style={circle,draw, minimum size=0.6cm},inner sep=0mm,scale=0.8]
\foreach \i in {0,...,7}  \node[vertex] (\i) at (-45.0*\i:3) {$v_\i$};
\foreach \i in {0,...,7}  \draw let \n1={int(mod(\i+1,8))} in (\i) -- (\n1);
\node at (-45.0*0:2.1) {\small $4$};
\node at (-45.0*3:2.1) {\small $5$};
\node (u1) at (-45.0*1:4) {};
\draw (1) -- (u1);
\node at (-45.0*1:2.1) {\small $3$};
\node[vertex] (v'2) at (-45.0*2:4.5) {$v'_2$};
\node (u'2) at (-45.0*2:5.5) {};
\node at (-45.0*2-11:4.5) {\small $3$};
\draw (2) -- (v'2);
\draw (v'2) -- (u'2);
\node at (-45.0*2:2.1) {\small $4$};
\node (u4) at (-45.0*4:4) {};
\draw (4) -- (u4);
\node at (-45.0*4:2.1) {\small $3$};
\node[vertex] (v'5) at (-45.0*5:4.5) {$v'_5$};
\node[vertex] (v''5) at (-45.0*5-7:6) {$v''_5$};
\node[vertex] (v'''5) at (-45.0*5+7:6) {$v'''_5$};
\node (u''5) at (-45.0*5-7:7) {};
\node (u'''5) at (-45.0*5+7:7) {};
\node at (-45.0*5-11:4.5) {\small $4$};
\node at (-45.0*5-14:6) {\small $3$};
\node at (-45.0*5+14:6) {\small $3$};
\draw (5) -- (v'5);
\draw (v'5) -- (v''5);
\draw (v'5) -- (v'''5);
\draw (v''5) -- (u''5);
\draw (v'''5) -- (u'''5);
\node at (-45.0*5:2.1) {\small $5$};
\node[vertex] (v'6) at (-45.0*6:4.5) {$v'_6$};
\node (u'6) at (-45.0*6:5.5) {};
\node at (-45.0*6-11:4.5) {\small $3$};
\draw (6) -- (v'6);
\draw (v'6) -- (u'6);
\node at (-45.0*6:2.1) {\small $4$};
\node (u7) at (-45.0*7:4) {};
\draw (7) -- (u7);
\node at (-45.0*7:2.1) {\small $3$};
\end{tikzpicture}
}
}

\phantomcaption
\end{figure}

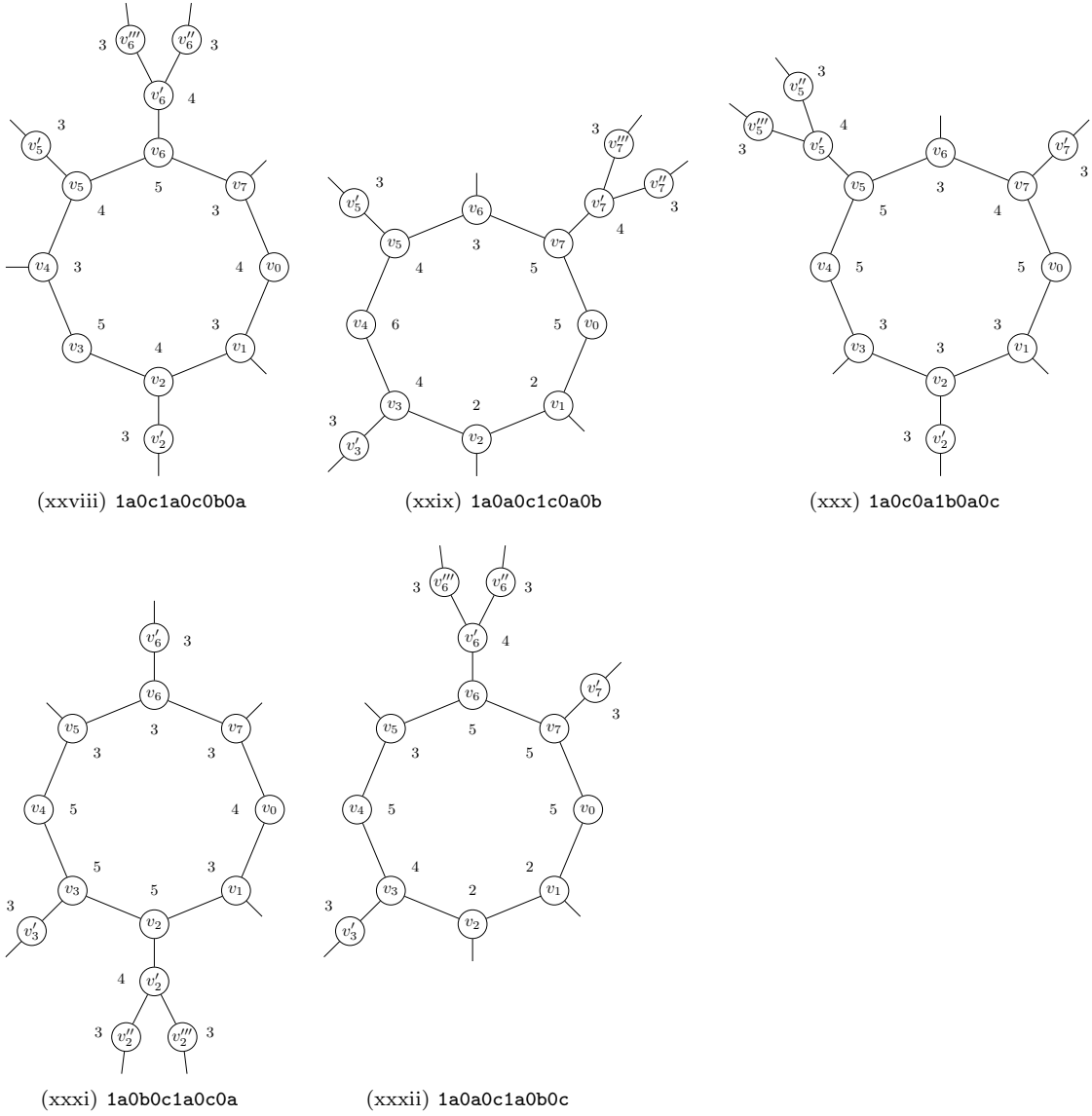
\begin{figure}[H]
\ContinuedFloat

\subfloat[\texttt{1a0c1a0c0b0a}]{
\label{subfig:1a0c1a0c0b0a}
\scalebox{0.65}{
	\begin{tikzpicture}[join=bevel,vertex/.style={circle,draw, minimum size=0.6cm},inner sep=0mm,scale=0.8]
\foreach \i in {0,...,7}  \node[vertex] (\i) at (-45.0*\i:3) {$v_\i$};
\foreach \i in {0,...,7}  \draw let \n1={int(mod(\i+1,8))} in (\i) -- (\n1);
\node at (-45.0*0:2.1) {\small $4$};
\node at (-45.0*3:2.1) {\small $5$};
\node (u1) at (-45.0*1:4) {};
\draw (1) -- (u1);
\node at (-45.0*1:2.1) {\small $3$};
\node[vertex] (v'2) at (-45.0*2:4.5) {$v'_2$};
\node (u'2) at (-45.0*2:5.5) {};
\node at (-45.0*2-11:4.5) {\small $3$};
\draw (2) -- (v'2);
\draw (v'2) -- (u'2);
\node at (-45.0*2:2.1) {\small $4$};
\node (u4) at (-45.0*4:4) {};
\draw (4) -- (u4);
\node at (-45.0*4:2.1) {\small $3$};
\node[vertex] (v'5) at (-45.0*5:4.5) {$v'_5$};
\node (u'5) at (-45.0*5:5.5) {};
\node at (-45.0*5-11:4.5) {\small $3$};
\draw (5) -- (v'5);
\draw (v'5) -- (u'5);
\node at (-45.0*5:2.1) {\small $4$};
\node[vertex] (v'6) at (-45.0*6:4.5) {$v'_6$};
\node[vertex] (v''6) at (-45.0*6-7:6) {$v''_6$};
\node[vertex] (v'''6) at (-45.0*6+7:6) {$v'''_6$};
\node (u''6) at (-45.0*6-7:7) {};
\node (u'''6) at (-45.0*6+7:7) {};
\node at (-45.0*6-11:4.5) {\small $4$};
\node at (-45.0*6-14:6) {\small $3$};
\node at (-45.0*6+14:6) {\small $3$};
\draw (6) -- (v'6);
\draw (v'6) -- (v''6);
\draw (v'6) -- (v'''6);
\draw (v''6) -- (u''6);
\draw (v'''6) -- (u'''6);
\node at (-45.0*6:2.1) {\small $5$};
\node (u7) at (-45.0*7:4) {};
\draw (7) -- (u7);
\node at (-45.0*7:2.1) {\small $3$};
\end{tikzpicture}
}
}
\subfloat[\texttt{1a0a0c1c0a0b}]{
\label{subfig:1a0a0c1c0a0b}
\scalebox{0.65}{
	\begin{tikzpicture}[join=bevel,vertex/.style={circle,draw, minimum size=0.6cm},inner sep=0mm,scale=0.8]
\foreach \i in {0,...,7}  \node[vertex] (\i) at (-45.0*\i:3) {$v_\i$};
\foreach \i in {0,...,7}  \draw let \n1={int(mod(\i+1,8))} in (\i) -- (\n1);
\node at (-45.0*0:2.1) {\small $5$};
\node at (-45.0*4:2.1) {\small $6$};
\node (u1) at (-45.0*1:4) {};
\draw (1) -- (u1);
\node at (-45.0*1:2.1) {\small $2$};
\node (u2) at (-45.0*2:4) {};
\draw (2) -- (u2);
\node at (-45.0*2:2.1) {\small $2$};
\node[vertex] (v'3) at (-45.0*3:4.5) {$v'_3$};
\node (u'3) at (-45.0*3:5.5) {};
\node at (-45.0*3-11:4.5) {\small $3$};
\draw (3) -- (v'3);
\draw (v'3) -- (u'3);
\node at (-45.0*3:2.1) {\small $4$};
\node[vertex] (v'5) at (-45.0*5:4.5) {$v'_5$};
\node (u'5) at (-45.0*5:5.5) {};
\node at (-45.0*5-11:4.5) {\small $3$};
\draw (5) -- (v'5);
\draw (v'5) -- (u'5);
\node at (-45.0*5:2.1) {\small $4$};
\node (u6) at (-45.0*6:4) {};
\draw (6) -- (u6);
\node at (-45.0*6:2.1) {\small $3$};
\node[vertex] (v'7) at (-45.0*7:4.5) {$v'_7$};
\node[vertex] (v''7) at (-45.0*7-7:6) {$v''_7$};
\node[vertex] (v'''7) at (-45.0*7+7:6) {$v'''_7$};
\node (u''7) at (-45.0*7-7:7) {};
\node (u'''7) at (-45.0*7+7:7) {};
\node at (-45.0*7-11:4.5) {\small $4$};
\node at (-45.0*7-14:6) {\small $3$};
\node at (-45.0*7+14:6) {\small $3$};
\draw (7) -- (v'7);
\draw (v'7) -- (v''7);
\draw (v'7) -- (v'''7);
\draw (v''7) -- (u''7);
\draw (v'''7) -- (u'''7);
\node at (-45.0*7:2.1) {\small $5$};
\end{tikzpicture}
}
}
\subfloat[\texttt{1a0c0a1b0a0c}]{
\label{subfig:1a0c0a1b0a0c}
\scalebox{0.65}{
	\begin{tikzpicture}[join=bevel,vertex/.style={circle,draw, minimum size=0.6cm},inner sep=0mm,scale=0.8]
\foreach \i in {0,...,7}  \node[vertex] (\i) at (-45.0*\i:3) {$v_\i$};
\foreach \i in {0,...,7}  \draw let \n1={int(mod(\i+1,8))} in (\i) -- (\n1);
\node at (-45.0*0:2.1) {\small $5$};
\node at (-45.0*4:2.1) {\small $5$};
\node (u1) at (-45.0*1:4) {};
\draw (1) -- (u1);
\node at (-45.0*1:2.1) {\small $3$};
\node[vertex] (v'2) at (-45.0*2:4.5) {$v'_2$};
\node (u'2) at (-45.0*2:5.5) {};
\node at (-45.0*2-11:4.5) {\small $3$};
\draw (2) -- (v'2);
\draw (v'2) -- (u'2);
\node at (-45.0*2:2.1) {\small $3$};
\node (u3) at (-45.0*3:4) {};
\draw (3) -- (u3);
\node at (-45.0*3:2.1) {\small $3$};
\node[vertex] (v'5) at (-45.0*5:4.5) {$v'_5$};
\node[vertex] (v''5) at (-45.0*5-7:6) {$v''_5$};
\node[vertex] (v'''5) at (-45.0*5+7:6) {$v'''_5$};
\node (u''5) at (-45.0*5-7:7) {};
\node (u'''5) at (-45.0*5+7:7) {};
\node at (-45.0*5-11:4.5) {\small $4$};
\node at (-45.0*5-14:6) {\small $3$};
\node at (-45.0*5+14:6) {\small $3$};
\draw (5) -- (v'5);
\draw (v'5) -- (v''5);
\draw (v'5) -- (v'''5);
\draw (v''5) -- (u''5);
\draw (v'''5) -- (u'''5);
\node at (-45.0*5:2.1) {\small $5$};
\node (u6) at (-45.0*6:4) {};
\draw (6) -- (u6);
\node at (-45.0*6:2.1) {\small $3$};
\node[vertex] (v'7) at (-45.0*7:4.5) {$v'_7$};
\node (u'7) at (-45.0*7:5.5) {};
\node at (-45.0*7-11:4.5) {\small $3$};
\draw (7) -- (v'7);
\draw (v'7) -- (u'7);
\node at (-45.0*7:2.1) {\small $4$};
\end{tikzpicture}
}
}

\subfloat[\texttt{1a0b0c1a0c0a}]{
\label{subfig:1a0b0c1a0c0a}
\scalebox{0.65}{
	\begin{tikzpicture}[join=bevel,vertex/.style={circle,draw, minimum size=0.6cm},inner sep=0mm,scale=0.8]
\foreach \i in {0,...,7}  \node[vertex] (\i) at (-45.0*\i:3) {$v_\i$};
\foreach \i in {0,...,7}  \draw let \n1={int(mod(\i+1,8))} in (\i) -- (\n1);
\node at (-45.0*0:2.1) {\small $4$};
\node at (-45.0*4:2.1) {\small $5$};
\node (u1) at (-45.0*1:4) {};
\draw (1) -- (u1);
\node at (-45.0*1:2.1) {\small $3$};
\node[vertex] (v'2) at (-45.0*2:4.5) {$v'_2$};
\node[vertex] (v''2) at (-45.0*2-7:6) {$v''_2$};
\node[vertex] (v'''2) at (-45.0*2+7:6) {$v'''_2$};
\node (u''2) at (-45.0*2-7:7) {};
\node (u'''2) at (-45.0*2+7:7) {};
\node at (-45.0*2-11:4.5) {\small $4$};
\node at (-45.0*2-14:6) {\small $3$};
\node at (-45.0*2+14:6) {\small $3$};
\draw (2) -- (v'2);
\draw (v'2) -- (v''2);
\draw (v'2) -- (v'''2);
\draw (v''2) -- (u''2);
\draw (v'''2) -- (u'''2);
\node at (-45.0*2:2.1) {\small $5$};
\node[vertex] (v'3) at (-45.0*3:4.5) {$v'_3$};
\node (u'3) at (-45.0*3:5.5) {};
\node at (-45.0*3-11:4.5) {\small $3$};
\draw (3) -- (v'3);
\draw (v'3) -- (u'3);
\node at (-45.0*3:2.1) {\small $5$};
\node (u5) at (-45.0*5:4) {};
\draw (5) -- (u5);
\node at (-45.0*5:2.1) {\small $3$};
\node[vertex] (v'6) at (-45.0*6:4.5) {$v'_6$};
\node (u'6) at (-45.0*6:5.5) {};
\node at (-45.0*6-11:4.5) {\small $3$};
\draw (6) -- (v'6);
\draw (v'6) -- (u'6);
\node at (-45.0*6:2.1) {\small $3$};
\node (u7) at (-45.0*7:4) {};
\draw (7) -- (u7);
\node at (-45.0*7:2.1) {\small $3$};
\end{tikzpicture}
}
}
\subfloat[\texttt{1a0a0c1a0b0c}]{
\label{subfig:1a0a0c1a0b0c}
\scalebox{0.65}{
	\begin{tikzpicture}[join=bevel,vertex/.style={circle,draw, minimum size=0.6cm},inner sep=0mm,scale=0.8]
\foreach \i in {0,...,7}  \node[vertex] (\i) at (-45.0*\i:3) {$v_\i$};
\foreach \i in {0,...,7}  \draw let \n1={int(mod(\i+1,8))} in (\i) -- (\n1);
\node at (-45.0*0:2.1) {\small $5$};
\node at (-45.0*4:2.1) {\small $5$};
\node (u1) at (-45.0*1:4) {};
\draw (1) -- (u1);
\node at (-45.0*1:2.1) {\small $2$};
\node (u''2) at (-45.0*2-7:7) {};
\node (u'''2) at (-45.0*2+7:7) {};
\node (u2) at (-45.0*2:4) {};
\draw (2) -- (u2);
\node at (-45.0*2:2.1) {\small $2$};
\node[vertex] (v'3) at (-45.0*3:4.5) {$v'_3$};
\node (u'3) at (-45.0*3:5.5) {};
\node at (-45.0*3-11:4.5) {\small $3$};
\draw (3) -- (v'3);
\draw (v'3) -- (u'3);
\node at (-45.0*3:2.1) {\small $4$};
\node (u5) at (-45.0*5:4) {};
\draw (5) -- (u5);
\node at (-45.0*5:2.1) {\small $3$};
\node[vertex] (v'6) at (-45.0*6:4.5) {$v'_6$};
\node[vertex] (v''6) at (-45.0*6-7:6) {$v''_6$};
\node[vertex] (v'''6) at (-45.0*6+7:6) {$v'''_6$};
\node (u''6) at (-45.0*6-7:7) {};
\node (u'''6) at (-45.0*6+7:7) {};
\node at (-45.0*6-11:4.5) {\small $4$};
\node at (-45.0*6-14:6) {\small $3$};
\node at (-45.0*6+14:6) {\small $3$};
\draw (6) -- (v'6);
\draw (v'6) -- (v''6);
\draw (v'6) -- (v'''6);
\draw (v''6) -- (u''6);
\draw (v'''6) -- (u'''6);
\node at (-45.0*6:2.1) {\small $5$};
\node[vertex] (v'7) at (-45.0*7:4.5) {$v'_7$};
\node (u'7) at (-45.0*7:5.5) {};
\node at (-45.0*7-11:4.5) {\small $3$};
\draw (7) -- (v'7);
\draw (v'7) -- (u'7);
\node at (-45.0*7:2.1) {\small $5$};
\end{tikzpicture}
}
}

\caption{Reducible cycles.}
\label{fig:reducible_cycles}
\end{figure}

Proof of \Cref{lemma:reducible_cycles}.

\begin{proof}
The outline of the following proofs uses the same conventions as in the proof \Cref{lemma:special_reducible}.

\figureproof{\Cref{subfig:1a1a0c0a0b0c}}
We have \gs. Now, we redefine $S=\{v'_7,v_7,v_0,v_1,v_2\}$ and consider a coloring $\phi$ of $G-S$. The list of remaining colors for $v'_7$, $v_7$, $v_0$, $v_1$, and $v_2$ are at least 2, 2, 4, 2, and 2 respectively and we can assume w.l.o.g. that they are respectively $\{a,b\}$, $\{a,b\}$, $\{a,b,c,d\}$, $\{c,d\}$, and $\{c,d\}$ by \Cref{fig:forced_non_colorable_P5}. Now, we uncolor $v_3$, $v_4$, $v'_4$, $v_5$, $v_6$, $v'_6$, $v''_6$, and $v'''_6$. The lower bounds on the lists of available colors for every vertex now corresponds to the ones indicated on the figure. Let $\phi(v_3)=x$, $\phi(v_4)=y$, and $\phi(v_6)=z$. We deduce that $L(v'_7)=\{a,b,z\}$, $\{a,b,z\}\subset L(v_7)$, $L(v_0)=\{a,b,c,d,z\}$, $L(v_1)=\{c,d,x\}$, and $L(v_2)=\{c,d,x,y\}$. Moreover, we claim that $L(v'_4)\neq L(v_3)$. Otherwise, we can simply switch the colors of $v_3$ and $v'_4$ in $\phi$ and we can extend this coloring to $S$ by \Cref{fig:forced_non_colorable_P5} as the remaining colors for $v_1$ and $v_2$ would no longer be $\{c,d\}$ while the remaining colors for $v'_7$, $v_7$, and $v_0$ stay the same.

Thanks to the observations above, we can color these vertices as follow. First, we restrict $L(v_5)$ to $L(v_5)\setminus\{a,b\}$. Since $|L(v_4)|\geq 3$, we color $v_4$ with a color different from $x$ and $y$. Now, we color $v_5$, $v_6$, $v'_6$, $v''_6$, and $v''_6$ by \Cref{subfig:config5}. Vertices $v_3$ and $v'_4$ are colorable since $L(v'_4)\neq L(v_3)$. 

If $v_3$ is not colored $c$, $d$, or $x$, then the number of colors remaining for $v'_7$, $v_7$, $v_0$, $v_1$, and $v_2$ are at least 2, 2, 4, 3, and 2 respectively since $L(v_1)=\{c,d,x\}$. So, $S$ is colorable thanks to \Cref{fig:forced_non_colorable_P5}.

If $v_3$ is colored $c$, $d$, or $x$, then neither $v_3$ nor $v_4$ is colored $y$. In other words, $v_2$ has $y$ as an available color while $v_1$ does not. Thus, $v'_7$, $v_7$, $v_0$, $v_1$, and $v_2$ can be colored by \Cref{fig:forced_non_colorable_P5} as they have at least 2, 2, 4, 2, and 2 remaining colors respectively.

\hfill\smallqed\medskip

\figureproof{\Cref{subfig:1c0a0c0b0c0b0c}}
If $v''_4$ sees $v''_6$ or $v'''_6$, then they must be at distance exactly 2 since $G$ has girth 8. Say $v_8$ is the common neighbor between $v''_4$ and $v''_6$, then $v'''_4$, $v'_4$, $v''_4$, $v_8$, $v''_6$, $v'_6$, and $v'''_6$ form the reducible configuration from \Cref{subfig:c1a1c}. 

If $v''_4$ sees $v'_7$, then they must be at distance exactly 2 since $G$ has girth 8. Say $v_8$ is their common neighbor, then $v'''_4$, $v'_4$, $v''_4$, $v_8$, $v'_7$, $v_7$, and $v_0$ form the reducible configuration from \Cref{subfig:c1a1c}. The same holds if $v''_4$ sees $v'_1$, or if $v''_6$ sees $v'_1$.

If $v''_6$ sees $v'_3$, then they must be at distance exactly 2 since $G$ has girth 8. Say $v_8$ is their common neighbor, then $v''_6$, $v_8$, $v'_3$, $v_3$, $v_4$, $v'_4$, $v''_4$, $v'''_4$, $v_5$, $v'_5$, $v_6$, $v'_6$, and $v'''_6$ form the reducible configuration from \Cref{subfig:1a1a0b0c0a0c}. 

Symmetrically, these observations also hold for $v'''_4$ and $v'''_6$.

If $v'_3$ sees $v'_7$, then they must be at distance exactly 2 since $G$ has girth 8. Say $v_8$ is their common neighbor, then $v'_3$, $v_8$, $v'_7$, $v_7$, $v_0$, $v_1$, and $v'_1$ form the reducible configuration from \Cref{subfig:c1a1c}. The same holds if $v'_5$ sees $v'_1$.

Therefore, we have \gs. We color $v'_6$ such that $v''_6$ has 3 colors left and $v'_4$ such that $v''_4$ has 3 colors left. Then, we color $v_7$ such that $v'_7$ has 3 colors left. Now, we color $v_2$, $v_3$, $v'_3$, $v_5$, $v_4$, $v'''_4$, $v''_4$, $v'_5$, $v'_1$, $v_1$, $v_0$, $v_6$, $v'_7$, $v'''_6$, and $v''_6$ in this order.

\hfill\smallqed\medskip

\figureproof{\Cref{subfig:1c0a0c0c0b0b0c}}
If $v''_6$ sees $v'_3$, then they must be at distance exactly 2 since $G$ has girth 8. Say $v_8$ is their common neighbor, then $v''_6$, $v_8$, $v'_3$, $v_3$, $v_4$, $v'_4$, $v_5$, $v'_5$, $v''_5$, $v'''_5$, $v_6$, $v'_6$, and $v'''_6$ form the reducible configuration from \Cref{subfig:1a1a0c0b0a0c}.

If $v''_6$ sees $v'_1$, then  they must be at distance exactly 2 since $G$ has girth 8. Say $v_8$ is their common neighbor, then $v''_6$, $v_8$, $v'_1$, $v_1$, $v_0$, $v_7$, and $v'_7$ form the reducible configuration from \Cref{subfig:c1a1c}.

The same observations hold for $v'''_6$ by symmetry.

If $v'_3$ sees $v'_7$, then  they must be at distance exactly 2 since $G$ has girth 8. Say $v_8$ is their common neighbor, then $v'_3$, $v_8$, $v'_7$, $v_7$, $v_0$, $v_1$, and $v'_1$ form the reducible configuration from \Cref{subfig:c1a1c}.

Now, observe that at least one vertex among $v''_5$ and $v'''_5$ do not see $v'_1$, say $v'''_5$. Also note that, if $v''_5$ sees $v'_1$, then $|L(v''_5)|\geq 4$ and $|L(v'_1)|\geq 4$. We color $v'_5$ such that $v'''_5$ has 3 colors left, $v'_6$ such that $v'''_6$ has 3 colors left, and $v_3$ such that $v'_3$ has 3 colors left. Then, we color $v_7$ such that $v'_7$ has 3 colors left. We finish by coloring $v'_4$, $v_5$, $v_4$, $v_2$, $v'_3$, $v''_5$, $v'''_5$, $v'_1$, $v_1$, $v_0$, $v_6$, $v'_7$, $v''_6$, and $v'''_6$ in this order.

\hfill\smallqed\medskip

\figureproof{\Cref{subfig:1c0b0c0a0c0b0c}}
If $v'_7$ sees $v'_3$, then they must be at distance exactly 2 since $G$ has girth $8$. Say $v_8$ is their common neighbor, then $v'_1$, $v_1$, $v_0$, $v_7$, $v'_7$, $v_8$, and $v'_3$ form the reducible configuration from \Cref{subfig:c1a1c}. Similarly, the same holds when $v'_7$ sees $v''_2$ or $v'''_2$.

If $v'_1$ sees $v'_5$, then they must be at distance exactly 2 since $G$ has girth $8$. Say $v_8$ is their common neighbor, then $v'_5$, $v_8$, $v'_1$, $v_1$, $v_0$, $v_7$, and $v'_7$ form the reducible configuration from \Cref{subfig:c1a1c}. Similarly, the same holds when $v'_1$ sees $v''_6$ or $v'''_6$.

If $v'_3$ sees $v'''_6$, then they must be at distance exactly 2 since $G$ has girth $8$. Say $v_8$ is their common neighbor, then $v'''_6$, $v_8$, $v'_3$, $v_3$, $v_4$, $v_5$, $v'_5$, $v_6$, $v_7$, $v'_7$, $v_0$, $v'_6$, and $v''_6$ form the reducible configuration from \Cref{subfig:1a1a0a0c0b0c}. Similarly, the same holds when $v'_3$ sees $v''_6$, or when $v'_5$ sees $v''_2$ or $v'''_2$.

If $v''_2=v''_6$, then $v''_2$, $v'_2$, $v_2$, $v_1$, $v'_1$, $v_0$, $v_7$, $v'_7$, $v_6$, $v'_6$, and $v'''_6$ form the reducible configuration from \Cref{subfig:1a0a0c1c0a0c}. Similarly, the same holds when $v''_2=v'''_6$, or when $v'''_2=v''_6$ or $v'''_6$.

Now, if $v''_2$ sees $v''_6$, then they must be at distance exactly 2 since $G$ has no $2^+-path$ by \Cref{lemma:no 2-path}. The same holds when $v''_2$ sees $v'''_6$, or when $v'''_2$ sees $v''_6$ or $v'''_6$. So, there is a vertex among $v''_2$ and $v'''_2$ that does not see $v''_6$ nor $v'''_6$, say $v''_2$. The same holds for $v''_6$ and $v'''_6$, so say $v''_6$ does not see $v''_2$ nor $v'''_2$. Observe that $|L(v''_6)|=|L(v''_2)|=3$, so we can color $v'_6$ differently from $L(v''_6)$, and $v'_2$ differently from $L(v''_2)$. By the pigeonhole principle, we can color $v_1$ and $v'_7$ with the same color since we have six colors in total. We finish by coloring $v_3$, $v_4$, $v_5$, $v'_3$, $v'_5$, $v_2$, $v'_1$, $v_6$, $v_7$, $v_0$, $v'''_6$, $v''_6$, $v'''_2$, and $v''_2$ in this order. 
\hfill\smallqed\medskip

\figureproof{\Cref{subfig:1a0a0c1c0b0a}}
If $v'_3$ sees $v''_6$, then they must be at distance exactly 2 since $G$ has girth $8$. Say $v_8$ is their common neighbor, then $v''_6$, $v_8$, $v'_3$, $v_3$, $v_4$, $v_5$, $v_6$, and $v'_6$ form the reducible configuration from \Cref{subfig:1a1a1a0a0a}. By symmetry the same holds when  $v'_3$ sees $v'''_6$.
Thus, we have \gs. Color vertex $v'_6$ with $x\notin L(v''_6)$, and afterwards color $v_5$ with $y\notin L(v'_5)$. We finish by coloring the remaining vertices in the following order: $v_7$, $v_1$, $v_2$, $v_0$, $v_6$, $v'''_6$, $v''_6$, $v_3$, $v'_3$, $v_4$, and $v'_5$.

\hfill\smallqed\medskip

\figureproof{\Cref{subfig:1a0a1c0a0c0c}}
Note that \gs. Color $v_4$ with $a\notin L(v'_4)$, then color $v_2$ and $v_1$ greedily. Color $v_0$, $v_7$, $v'_7$, $v_6$, $v'_6$, $v_5$ by \Cref{subfig:config7} and finish by coloring $v_3$ and $v'_4$ in this order.
\hfill\smallqed\medskip

\figureproof{\Cref{subfig:1a0c0a1c0a0c}}
Note that \gs. Here, we redefine $S=\{v_4,v_5,v'_5\}$. Consider $\phi$ a coloring of $G-S$. We uncolor $v_0$, $v_1$, $v_2$, $v'_2$, $v_3$, $v_6$, $v_7$, $v'_7$. By \Cref{fig:forced_non_colorable_3path}, we must have $L(v'_5)=\{a,b,c\}$, $L(v_4)=\{a,b,c,d,e\}$, $\phi(v_6)=c$, $\phi(v_3)=d$ and $\phi(v_2)=e$, as otherwise $\phi$ would be extendable to $G$. Note that $L(v_3)\neq L(v'_2)$ or we could have switched their colors in $\phi$ and $d$ would be an available color for $v_4$ and we could extend $\phi$ to $G$. Now, we color $v_2$ and $v_6$ with colors not in $\{d,e\}$. Then, we color $v'_7$, $v_7$, $v_0$, $v_1$ by \Cref{subfig:config1}. Color $v'_2$ and $v_3$ greedily, which is possible since $L(v'_2)\neq L(v_3)$. Finally, since at least $d$ or $e$ is available for $v_4$ and $d,e\notin L(v'_5)$, by \Cref{fig:forced_non_colorable_3path}, we can color $v_4$, $v_5$, $v'_5$.

\hfill\smallqed\medskip

\figureproof{\Cref{subfig:1a0a0c0c0c0c0a}}
Note that \gs. Observe that it is always possible to color $v_1$, $v_2$ and $v_7$ such that afterwards $v_0$ has at least two available colors. Indeed, either $v_2$ and $v_7$ can be colored with the same color, or $|L(v_2)\cup L(v_7)|\geq 5$.

Then color vertices $v'_6$, $v_6$, $v_5$, $v'_5$, $v_4$, $v'_4$, $v_3$, $v'_3$ by \Cref{subfig:config9} and finish by coloring $v_0$.
\hfill\smallqed\medskip

\figureproof{\Cref{subfig:1a1a0b0c0a0c}}
If $v''_4$ sees $v'_7$ by sharing a common neighbor, say $v_8$, then vertices $v_0$, $v_7$, $v'_7$, $v_8$, $v''_4$, $v'_4$, $v'''_4$ form the reducible configuration of \Cref{subfig:c1a1c}. The case when $v'''_4$ sees $v'_7$ is symmetric.

Therefore, we can suppose that \gs. We color $v'_4$ with a color $x\notin L(v''_4)$ and $v_7$ with a color $y\in L(v'_7)$. Now color $v_3$, $v_4$, $v_5$, $v'_5$, $v_6$ by \Cref{subfig:config4}. Finish by coloring $v'''_4$, $v''_4$, $v_1$, $v_2$, $v_0$, $v'_7$ in this order.
\hfill\smallqed\medskip

\figureproof{\Cref{subfig:1a1a0c0b0c0a}}
If $v''_5$ sees $v_1$ by sharing a common neighbor, say $v_8$, then vertices $v'''_5$, $v'_5$, $v''_5$, $v_8$, $v_1$, $v_2$, $v_0$ form the reducible configuration of \Cref{subfig:1a1a0c1}. The case when $v'''_5$ sees $v_1$ is symmetric.

Therefore, know that \gs. We prove first the following observations.

\begin{itemize}
\item $L(v_3)=\{a,b,c\}$ and $L(v_7)=\{d,e,f\}$. Suppose to the contrary that we can color $v_3$ and $v_7$ with the same color. Then color $v_0$ with $x$ such that $|L(v_1)\setminus\{x\}|\geq 2$. Color vertices $v'_4$, $v_4$, $v_5$, $v_6$, $v'_6$, $v'_5$, $v''_5$, $v'''_5$ by \Cref{subfig:config10} and finish by coloring $v_2$, $v_1$ in this order.

\item Observe that vertices $v_3$ and $v_7$ are symmetric and thus by pigeonhole principle w.l.o.g. we have $\{a,b\}\subset L(v_1)$.

\item $L(v_1)=L(v_3)=\{a,b,c\}$. If not, that is $c\notin L(v_1)$, then color $v_3$ with $c$ and $v_7$ with $x\notin L(v_1)$. Color vertices $v'_4$, $v_4$, $v_5$, $v_6$, $v'_6$, $v'_5$, $v''_5$, $v'''_5$ by \Cref{subfig:config10} and finish by coloring $v_0$, $v_2$, $v_1$ in this order.

\item $\{a,b,c\}\subset L(v_0)$. Otherwise, color $v_1$ with $x\notin L(v_0)$. Then color $v_3$, $v_4$, $v'_4$, $v_5$, $v'_5$, $v''_5$, $v'''_5$, $v_6$, $v'_6$, $v_7$ by \Cref{subfig:config16}. Finish by coloring $v_2$, $v_0$ in this order.
\end{itemize}

By the last item, w.l.o.g. we can assume that $|L(v_0)\setminus\{d,e\}| \geq 4$. Thus we restrict $L(v_7)$ to $\{d,e\}$. Then color $v_3$, $v_4$, $v'_4$, $v_5$, $v'_5$, $v''_5$, $v'''_5$, $v_6$, $v'_6$, $v_7$ by \Cref{subfig:config16}. Finish by coloring $v_1$, $v_2$, $v_0$ in this order.
\hfill\smallqed\medskip

\figureproof{\Cref{subfig:1c0a0c0c0c0a0c}}
If $v'_1$ sees $v'_5$, then the are at distance exactly 2 and share a common neighbor, say $v_8$. Then vertices $v_0$, $v_1$, $v'_1$, $v_8$, $v'_5$, $v_5$, $v_6$, $v_7$ correspond to the reducible configuration of \Cref{subfig:1a1a1a0a0a}. The case when $v'_7$ sees $v'_3$ is symmetric.

Therefore, we can assume that \gs. Color $v_1$ with $x\notin L(v'_1)$ and $v_5$ with $y\notin L(v'_5)$. Then color vertices $v'_4$, $v_4$, $v_3$, $v'_3$, $v_2$ by \Cref{subfig:config4}. Finish by coloring $v_6$, $v'_5$, $v_7$, $v'_7$, $v_0$, $v'_1$ in this order.
\hfill\smallqed\medskip

\figureproof{\Cref{subfig:0a0a0c0c0c0c0c0c}}
If $v'_2$ sees $v'_6$, then they must be at distance exactly 2 since $G$ has girth $8$. Say $v_8$ is their common neighbor, then $v'_6$, $v_8$, $v'_2$, $v_2$, $v_3$, $v'_3$, $v_4$, $v'_4$, $v_5$, $v'_5$ and $v_6$ form the reducible configuration from \Cref{subfig:1a1a0a0c0c0a}. The same holds when $v'_3$ sees $v'_7$.

Now, \gs.  We restrict $L(v_3)$ to $L(v_3)\setminus L(v'_4)$. We color $v_0$, $v_1$, $v_2$, $v'_2$, $v_3$ by \Cref{subfig:config5}, then we color $v'_3$. After that, we color $v_4$, $v_5$, $v'_5$, $v_6$, $v'_6$, $v_7$, $v'_7$ by \Cref{subfig:config8} and finish by coloring $v'_4$.
\hfill\smallqed\medskip

\figureproof{\Cref{subfig:1c1a0a0c0b0a}}
If $v'_1$ sees $v'_5$, then they must be at distance exactly 2 since $G$ has girth $8$. Say $v_8$ is their common neighbor, then $v_2$, $v_1$, $v'_1$, $v_8$, $v'_5$, $v_5$, $v_4$, $v_3$ form the reducible configuration from \Cref{subfig:1a1a1a0a0a}.

If $v'_1$ sees $v''_6$, then they must be at distance exactly 2 since $G$ has girth $8$ and say $v_8$ is their common neighbor. Then $v_0$, $v_1$, $v'_1$, $v_8$, $v''_6$, $v'_6$, $v_6$, $v_7$ form the reducible configuration from \Cref{subfig:1a1a1a0a0a}.

So we have \gs. Take a coloring $\phi$ where $v_3$ and $v_4$ are colored greedily, then  $v''_6$, $v'_6$, $v'''_6$, $v_6$, $v_7$, $v_5$, $v'_5$ are colored by \Cref{subfig:config8}. Then the remaining non-colored vertices are $v_0$, $v_1$, $v'_1$ and $v_2$. By \Cref{fig:forced_non_colorable_claw} we conclude that initially $L(v'_1)=\{a,b,c\}$, $L(v_0)=\{a,b,c,\phi(v_6),\phi(v_7)\}$, $L(v_1)=\{a,b,c,\phi(v_3),\phi(v_7)\}$ and $L(v_2)=\{a,b,c,\phi(v_3),\phi(v_4)\}$. Without loss of generality $\phi(v_3)=d$ and $\phi(v_7)=e$. Now observe that the color of $v_3$ was chosen arbitrarily, thus there exists a similar coloring $\phi'$ where $\phi'(v_3)\neq d$. Moreover, using again \Cref{fig:forced_non_colorable_claw}, $\phi'(v_3)=e$. Thus we deduce that $L(v_0)=L(v_1)=L(v_2)=\{a,b,c,d,e\}$ and $L(v_4)\supset\{d,e\}\subset L(v_3)$.

With all the remarks of the previous paragraphs, we give a coloring of the configuration: restrict $L(v_6)$ to $L(v_6)\setminus\{d,e\}$ and restrict $L(v_5)$ to $L(v_5)\setminus\{d,e\}$. Then color $v''_6$, $v'_6$, $v'''_6$, $v_7$, $v_5$ by \Cref{subfig:config7} and color $v'_5$, $v_4$, $v_3$ in this order. Recall that $v_6$ was colored $x\notin\{d,e\}$. Thus the list of remaining available colors for $v_0$ is not $\{a,b,c\}=L(v'_1)$ and hence by \Cref{fig:forced_non_colorable_claw} we are done.
\hfill\smallqed\medskip

\figureproof{\Cref{subfig:1c1a0b0a0c0a}}
If $v''_4$ sees $v'_1$, then they must be at distance exactly 2 since $G$ has girth $8$. Say $v_8$ is their common neighbor, then $v_2$, $v_1$, $v'_1$, $v_8$, $v''_4$, $v'_4$, $v_4$, $v_3$ form the reducible configuration from \Cref{subfig:1a1a1a0a0a}.

So we have \gs. We redefine $S=\{v_0,v_1,v'_1,v_2\}$ and take a coloring $\phi$ of $G-S$. By \Cref{fig:forced_non_colorable_claw} we know that $L(v'_1)=L(v_1)=L(v_0)=L(v_2)=\{a,b,c\}$ and the colors of $v'_6$ and $v_7$ cannot be interchanged. Having that said, we uncolor vertices $v_3$, $v_4$, $v'_4$, $v''_4$, $v'''_4$, $v_5$, $v_6$, $v'_6$ and $v_7$ and the number of available colors for each vertex correspond to the numbers depicted on \Cref{subfig:1c1a0b0a0c0a}. We know now that $L(v_0)=\{a,b,c,\phi(v_6),\phi(v_7)\}$ and furthermore $L(v'_6)\neq L(v_7)$.

We color $v_6$ with $x\notin\{phi(v_6),\phi(v_7)\}$ and $v''_4$, $v'_4$, $v'''_4$, $v_4$, $v_3$, $v_5$ by \Cref{subfig:config5}. Then we color $v'_6$ and $v_7$ since $L(v'_6)\neq L(v_7)$. Now observe that $L(v_0)\neq\{a,b,c\}=L(v'_1)$ and thus by \Cref{fig:forced_non_colorable_claw} we are done.
\hfill\smallqed\medskip

\figureproof{\Cref{subfig:1c1a0a0b0c0a}}
Suppose $v''_5$ sees $v'_1$. By \Cref{lemma:no 2-path} they are at distance exactly two and therefore $|L(v''_5)|=|L(v'_1)|=4$. By pigeonhole principle we color vertices $v_0$ and $v_4$ with the same color $x$ and show the following:
\begin{itemize}
\item $x\notin L(v'_6)$. If not then we color $v'_6$ with $x$ as well and finish by coloring $v_3$, $v_7$, $v_5$, $v_6$, $v'_5$, $v'''_5$, $v''_5$, $v'_1$, $v_1$, $v_2$ in order.
\item $x\in L(v_6)$. If not then we color $v'_6$ arbitrarily and finish by coloring $v_3$, $v_7$, $v_5$, $v_6$, $v'_5$, $v'''_5$, $v''_5$, $v'_1$, $v_1$, $v_2$ in order.
\item $x\in L(v_3)$. If not then we color $v'_6$ arbitrarily and finish by coloring $v_7$, $v_6$, $v_5$, $v_3$, $v'_5$, $v'''_5$, $v''_5$, $v'_1$, $v_1$, $v_2$ in order.
\end{itemize}

We recolor the whole configuration as follows. Color $v_3$ and $v_6$ with $x$, then color $v_4$. Color $v_5$ such that vertex $v_7$ has at least two available colors left. Color $v'_5$, $v'''_5$, $v''_5$, $v'_1$ in this order. Color $v_7$, $v_0$, $v_1$, $v_2$ by \Cref{subfig:config1}.

The case when $v'''_5$ sees $v'_1$ is symmetric.

So we have \gs. Redefine $S=\{v_0,v_1,v'_1,v_2\}$. Take a coloring $\phi$ of $G-S$. If vertices of $S$ are colorable, then we are done. Hence by using \Cref{fig:forced_non_colorable_claw} we uncolor vertices $v_3$, $v_4$, $v_5$, $v'_5$, $v''_5$, $v'''_5$, $v_6$, $v'_6$, $v_7$ and conclude that after uncoloring $L(v'_1)=\{a,b,c\}$, $L(v_0)=\{a,b,c,\phi(v_6),\phi(v_7)\}$, $L(v_1)=\{a,b,c,\phi(v_3),\phi(v_7)\}$ and $L(v_2)=\{a,b,c,\phi(v_3),\phi(v_4)\}$. Observe that $L(v'_6)\neq L(v_7)$ as their color could be permuted and $\phi$ could be extended to $S$. 

Without loss of generality $\phi(v_6)=d$ and $\phi(v_7)=e$. Now one could restrict $L(v_6)$ to $L(v_6)\setminus\{d\}$, and give another coloring $\phi'$ of $G-S$ where first vertices $v''_5$, $v'_5$, $v'''_5$, $v_5$, $v_6$, $v_3$, $v_4$ are colored using \Cref{subfig:config8} and then since $L(v'_6)\neq L(v_7)$, vertices $v'_6$ and $v_7$ are colored greedily. Note that since $\phi'(v_6)\neq d$, using again \Cref{fig:forced_non_colorable_claw}, we necessarily have $\phi'(v_6)=e$. Thus we deduce  $L(v_0)=L(v_1)=L(v_2)=\{a,b,c,d,e\}$ and $L(v_4)\supset\{d,e\}\subset L(v_3)$.

With all the remarks of the previous paragraphs, we give a coloring of the configuration: restrict $L(v_6)$ to $L(v_6)=\{x,y\} \cap\{d,e\}=\emptyset$. Since $|L(v_5)| = 5$, we color $v_5$ with $z\notin\{x,y,d,e\}$. Then we color $v''_5$, $v'''_5$, $v'_5$, $v_4$, $v_3$ in this order. Now recall that initially $L(v'_6)\neq L(v_7)$ and that $v_5$ and $v_4$ were colored with colors other than $x$ and $y$. Therefore we color vertices $v_6$, $v'_6$, $v_7$ in this order. Recall that $v_6$ was colored say $x\notin\{d,e\}$. Thus the list of remaining available colors for $v_0$ is not $\{a,b,c\}=L(v'_1)$ and hence by \Cref{fig:forced_non_colorable_claw} we are done.
\hfill\smallqed\medskip

\figureproof{\Cref{subfig:0a0a0c0b0c0c0c0c}}
Restrict $L(v_3)$ to $L(v_3)\setminus L(v''_3)$. Then color vertices $v_0$, $v_1$, $v_2$, $v'_2$, $v_3$ by \Cref{subfig:config5}. Color vertices $v'_4$, $v_4$, $v_5$, $v'_5$, $v_6$, $v'_6$, $v_7$, $v'_7$ by \Cref{subfig:config9}. Finish by coloring $v'_3$, $v'''_3$, $v''_3$ in this order.
Note that this coloring procedure works even when $v''_3$ (resp. $v'''_3$) sees $v'_6$ or $v'_7$ and when $v'_2$ sees $v'_6$.
\hfill\smallqed\medskip

\figureproof{\Cref{subfig:0a0c0c0c0b0c0c0c}}
If $v'_1$ sees $v'_5$, then they must be at distance exactly 2 since $G$ has girth $8$. Say $v_8$ is their common neighbor, then $v'_5$, $v_8$, $v'_1$, $v_1$, $v_0$, $v_7$, $v'_7$, $v_6$, $v'_6$, $v_5$ form the reducible configuration from \Cref{subfig:1a1a0a0c0c0a}. Symmetrically, the same holds when $v'_3$ sees $v'_7$.

If $v'_2$ sees $v'_6$, then they must be at distance exactly 2 since $G$ has girth $8$. Say $v_8$ is their common neighbor, then $v'_6$, $v_8$, $v'_2$, $v_2$, $v_3$, $v'_3$, $v_4$, $v'_4$, $v''_4$, $v'''_4$, $v_5$, $v'_5$, $v_6$ form the reducible configuration from \Cref{subfig:1a1a0c0b0c0a}.

Color $v_7$ with a color that is not in $L(v'_7)$ and color $v_6$ such that $v'_6$ has at least two colors left. Color $v_0$ greedily. Color $v'_4$ such that $v''_4$ has at least three colors left. Now, $2=|L(v'_5)|\leq |L(v_5)|\leq 3$. If there exists $x\in L(v'_5)\setminus L(v_5)$, then we can color $v'_5$ with $x$, then color $v_5$, $v_4$, $v_3$, $v'_3$, $v_2$, $v'_2$, $v_1$, $v'_1$ by \Cref{subfig:config9}. We can finish by coloring $v'''_4$, $v''_4$, $v'_6$, $v'_7$ in this order. As a result, $L(v'_5)\subseteq L(v_5)$, in which case, we restrict $L(v_4)$ to $L(v_4)\setminus L(v'_5)$ and color $v_4$, $v_3$, $v'_3$, $v_2$, $v'_2$, $v_1$, $v'_1$ by \Cref{subfig:config8}. Finally, we finish by coloring $v_5$, $v'_5$, $v'''_4$, $v''_4$, $v'_6$, $v'_7$ in this order. Note that this coloring procedure works even when $v_0$ sees $v''_4$ or $v'''_4$. 
\hfill\smallqed\medskip

\figureproof{\Cref{subfig:1a0c0c0a0c0b0c}}
If $v'_3$ sees $v'_7$, then they must be at distance exactly 2 since $G$ has girth $8$. Say $v_8$ is their common neighbor, then $v_0$, $v_7$, $v'_7$, $v_8$, $v'_3$, $v_2$, $v_1$ form the reducible configuration from \Cref{subfig:1a1a1a0a0a}.

Restrict $L(v_0)$ to $L(v_0)\setminus L(v'_7)$. Color $v_4$, $v_3$, $v'_3$, $v_2$, $v'_2$, $v_1$, $v_0$ by \Cref{subfig:config8}. Color $v''_6$, $v'_6$, $v'''_6$, $v_6$, $v_7$, $v_5$, $v'_5$ by \Cref{subfig:config8} and finish by coloring $v'_7$. Note that this coloring procedure works even when $v'_2$ (resp. $v'_3$) sees $v''_6$ or $v'''_6$.
\hfill\smallqed\medskip

\figureproof{\Cref{subfig:1a0c0c0a0c0b0c}}
If $v'_3$ sees $v'_7$, then they must be at distance exactly 2 since $G$ has girth $8$. Say $v_8$ is their common neighbor, then $v_0$, $v_7$, $v'_7$, $v_8$, $v'_3$, $v_3$, $v_2$, $v_1$ form the reducible configuration from \Cref{subfig:1a1a1a0a0a}.

Restrict $L(v_0)$ to $L(v_0)\setminus L(v'_7)$. Color $v_4$, $v_3$, $v'_3$, $v_2$, $v'_2$, $v_1$, $v_0$ by \Cref{subfig:config8}. Color $v''_6$, $v'_6$, $v'''_6$, $v_6$, $v_7$, $v_5$, $v'_5$ by \Cref{subfig:config8} and finish by coloring $v'_7$. Note that this coloring procedure works even when $v'_2$ (resp. $v'_3$) sees $v''_6$ or $v'''_6$ at distance 2 since there are no 2-paths due to \Cref{lemma:no 2-path} (resp. since $G$ has girth at least 8).
\hfill\smallqed\medskip

\figureproof{\Cref{subfig:1a0b0c0c0c0a0c}}
If $v''_2$ sees $v'_7$, then they must be at distance exactly 2 since $G$ has girth $8$. Say $v_8$ is their common neighbor, then $v_0$, $v_7$, $v'_7$, $v_8$, $v''_2$, $v'_2$, $v_2$, $v_1$ form the reducible configuration from \Cref{subfig:1a1a1a0a0a}. Symmetrically, the same holds when $v'''_2$ sees $v'_7$.

If $v''_2$ sees $v'_5$, then they must be at distance exactly 2 since $G$ has girth $8$. Say $v_8$ is their common neighbor, then $v'_5$, $v_8$, $v''_2$, $v'_2$, $v_2$, $v_3$, $v'_3$, $v_4$, $v'_4$, $v_5$ form the reducible configuration from \Cref{subfig:1a1a0a0c0c0a}. Symmetrically, the same holds when $v'''_2$ sees $v'_5$.

Between $v''_2$ and $v'''_2$, there always exists one vertex that does not see $v_6$, say $v''_2$. Color $v'_2$ with a color that is not in $L(v''_2)$. By pigeonhole principle, since we have 6 colors, color $v_2$ and $v_6$ with the same color. Color $v_1$ greedily. Color $v'_3$, $v_3$, $v_4$, $v'_4$, $v_5$, $v'_5$ by \Cref{subfig:config6}. Finish by coloring $v_7$, $v'_7$, $v_0$, $v'''_6$, $v''_6$ in this order. 
Note that this coloring procedure works even when $v'_3$ sees $v'_7$.
\hfill\smallqed\medskip

\figureproof{\Cref{subfig:1a0c0b0c0c0a0c}}
Between $v''_3$ and $v'''_3$, there always exists one vertex that does not see $v'_7$, say $v''_3$.

If $L(v_1)=L(v'_2)$, then color $v_2$ with $x\notin L(v'_2)$. Restrict $L(v_3)$ to $L(v_3)\setminus L(v''_3)$. Color $v_3$, $v_4$, $v'_4$, $v_5$, $v'_5$, $v_6$ by \Cref{subfig:config7}. Color $v'_3$, $v'''_3$, $v''_3$ in this order. Then, color $v'_7$, $v_7$, $v_0$, $v_1$ by \Cref{subfig:config1} and finish by coloring $v'_2$.

If $L(v_1)\neq L(v'_2)$, then, by pigeonhole principle, color $v_2$ and $v_6$ with the same color. Restrict $L(v_3)$ to $L(v_3)\setminus L(v''_3)$. Color $v'_5$, $v_5$, $v_4$, $v'_4$, $v_3$ by \Cref{subfig:config5}. Color $v'_3$, $v'''_3$, $v''_3$ in this order. Then, color $v_1$ and $v_2$, which is possible since $L(v_1)\neq L(v'_2)$. Finish by coloring $v_7$, $v'_7$ and $v_0$ in this order. 

Note that this coloring procedure works even when $v'''_3$ sees $v'_7$ (at distance 2 since there are no 2-paths by \Cref{lemma:no 2-path}).

\hfill\smallqed\medskip

\figureproof{\Cref{subfig:1a0c0c0c0c0a0b}}
Between $v''_7$ and $v'''_7$, there always exists on vertex that does not see $v'_3$, say $v''_7$. Color $v'_7$ with a color not in $v''_7$. Color $v_2$ with a color such that $v'_3$ still retains three available colors. Color $v_0$ such that $v_6$ still retain two available colors. Color $v_1$ and $v'_2$ greedily. Color $v_3$, $v_4$, $v'_4$, $v_5$, $v'_5$, $v_6$, $v_7$ by \Cref{subfig:config8}. Finish by coloring $v'_3$, $v'''_7$ and $v''_7$ in this order. Note that this coloring procedure works even when $v''_7$ (resp. $v'''_7$) sees $v'_2$ or $v'_4$.
\hfill\smallqed\medskip

\figureproof{\Cref{subfig:1a0c0c0b0c0a0c}}
Between $v''_4$ and $v'''_4$, there always exists one vertex that does not see $v'_7$, say $v''_4$. Color $v'_4$ with a color $x\notin L(v''_4)$. Color $v_7$ such that $v'_7$ has at least three colors left. Color $v_6$, $v_5$, $v'_5$ in this order. Color $v_1$, $v_2$, $v'_2$, $v_3$, $v'_3$, $v_4$ by \Cref{subfig:config7}. Finish by coloring $v'''_4$, $v''_4$, $v_0$ and $v'_7$ in this order. Note that this coloring procedure works even when $v'''_4$ sees $v'_7$ (at distance 2 since $G$ has girth at least 8).
\hfill\smallqed\medskip

\figureproof{\Cref{subfig:1a0c0b0c0c0c0a}}
Restrict $L(v_3)$ to $L(v_3)\setminus L(v''_3)$. Color $v_2$ such that $v'_2$ has at least three colors left. Color $v_3$ then $v_1$ greedily. Color $v'_4$, $v_4$, $v_5$, $v'_5$, $v_6$, $v'_6$, $v_7$, $v_0$ by \Cref{subfig:config9}. Finish by coloring $v'_2$, $v'_3$, $v'''_3$ and $v''_3$ in this order. Note that this coloring procedure works even when $v''_3$ (resp. $v'''_3$) sees $v'_6$ or $v_7$, and when $v'_2$ sees $v'_6$.
\hfill\smallqed\medskip

\figureproof{\Cref{subfig:1a0c0c0b0c0c0a}}
Note that there always exists $ x \in L(v''_4) \cap L(v'_4)$. We start by showing the following observations:
\begin{itemize}
\item $x \in L(v_3)=L(v_5)$.  Now suppose w.l.o.g. that $x\notin L(v_3)$ or $L(v_3)\neq L(v_5)$. Color $v'_4$ with $x$. Color $v_5$ such that $v_3$ has at least four colors left. Color $v'_6$, $v_7$, $v_6$, $v'_5$ in this order. Color $v_4$, $v_3$, $v'_3$, $v_2$, $v'_2$, $v_1$, $v_0$ by \Cref{subfig:config8}. Finish by coloring $v'''_4$ and $v''_4$ in this order.
\item $x \notin L(v_2)$. Suppose that $x\in L(v_2)$. We color $v'_4$ and $v_2$ with $x$. Color $v_1$ and $v'_2$.
\begin{itemize}
\item If we can color $v'_3$ such that $v_3$ has at least two colors left, then we can color $v_3$, $v_4$, $v_5$, $v'_5$, $v_6$, $v'_6$, $v_7$, $v_0$ by \Cref{subfig:config9}, and finish by coloring $v'''_4$ and $v''_4$ in this order.
\item Otherwise, we must have $|L(v'_3)| = |L(v_3)| = 2$, in which case, we restrict $L(v_4)$ to $L(v_4)\setminus L(v_3)$. Now, we can color $v_4$, $v_5$, $v'_5$, $v_6$, $v'_6$, $v_7$, $v_0$ by \Cref{subfig:config8}, and finish by coloring $v_3$, $v'_3$, $v'''_4$ and $v''_4$ in this order.
\end{itemize}
\end{itemize}
With the observations above, we color $v_4$ with $x$ (since $L(v_4)$ contains all available colors). Color $v'_5$, $v_5$, $v_6$, $v'_6$, $v_7$ by \Cref{subfig:config5}. Color $v_0$, $v_1$, $v_2$, $v'_2$, $v_3$, $v'_3$ by \Cref{subfig:config6}. Finish by coloring $v'_4$, $v'''_4$ and $v''_4$ in this order. 

Note that in all of the above-mentioned coloring procedure, there is no problem even when $v'_2$ sees $v'_6$. 

\hfill\smallqed\medskip

\figureproof{\Cref{subfig:1a0a1c0b0b0c}}
Between $v''_5$ and $v'''_5$ (resp. $v''_6$ and $v'''_6$), there always exists one vertex that does not see $v_1$ (resp. $v_2$), say $v''_5$ (resp. $v''_6$). Restrict $L(v_5)$ to $L(v_5)\setminus L(v''_5)$. Color $v'_6$ with a color not in $L(v''_6)$. Color $v_7$ with a color not in $L(v'_7)$. Finish by coloring $v_1$, $v_2$, $v_5$, $v_3$, $v'_4$, $v_4$, $v_6$, $v_0$, $v'_7$, $v'''_6$, $v''_6$, $v'_5$, $v'''_5$, $v''_5$ in this order.
\hfill\smallqed\medskip

\figureproof{\Cref{subfig:1a0c1a0c0a0b}}
If $v''_7$ sees $v'_2$, then they must be at distance exactly 2 since $G$ has girth $8$. Say $v_8$ is their common neighbor, then $v'''_7$, $v'_7$, $v''_7$, $v_8$, $v'_2$, $v_2$, $v_3$ form the reducible configuration from \Cref{subfig:c1a1c}. Symmetrically, the same holds when $v'''_7$ sees $v'_2$.

So, we have \gs. Here, we redefine $S=\{v_0,v_7,v'_7,v''_7,v'''_7\}$ and consider $\phi$ a coloring of $G-S$. Note that the lists of available colors of vertices of $S$ correspond to \Cref{fig:forced_non_colorable_clawpath} or $\phi$ would be extendable to $G$. We uncolor $v_1$, $v_2$, $v'_2$, $v_3$, $v_4$, $v_5$, $v'_5$, $v_6$. By \Cref{fig:forced_non_colorable_clawpath}, we must have $L(v''_7)=L(v'''_7) = \{a,b,c\}$, $L(v'_7)=\{a,b,c,d\}$, $\phi(v_6)=d$, $e\in L(v_7)$ and $\phi(v_5) = e$ or $\phi(v_1) = e$. Note that $L(v_6)\neq L(v'_5)$, otherwise, it suffices to switch their colors in $\phi$ to extend it to $G$ by \Cref{fig:forced_non_colorable_clawpath}. Restrict $L(v_5)$ to $L(v_5)\setminus\{e\}$ and $L(v_1)$ to $L(v_1)\setminus\{e\}$. Color $v_1$, then $v'_2$, $v_2$, $v_3$, $v_4$, $v_5$ by \Cref{subfig:config2}. Color $v'_5$ and $v_6$ greedily (which is possible since $L(v_6)\neq L(v_5)$). Now, observe that $v_6$ must still be colored $d$ or by \Cref{fig:forced_non_colorable_clawpath}, we can extend this coloring. However, we know that $v_1$ and $v_5$ are not colored $e$, thus $e$ is still an available color for $v_7$. By \Cref{fig:forced_non_colorable_clawpath}, $S$ is colorable.
\hfill\smallqed\medskip

\figureproof{\Cref{subfig:1a0c1a0b0c0a}}
If $v''_5$ sees $v'_2$, then they must be at distance exactly 2 since $G$ has girth $8$. Say $v_8$ is their common neighbor, then $v''_5$, $v_8$, $v'_2$, $v_2$, $v'_2$, $v_2$, $v_3$, $v_4$, $v_5$, $v'_5$ form the reducible configuration from \Cref{subfig:1a1a1a0a0a}. Symmetrically, the same holds when $v'''_5$ sees $v'_2$.

Between $v''_5$ and $v'''_5$, there always exists one vertex that does not see $v_1$, say $v''_5$. We restrict $L(v_5)$ to $L(v_5)\setminus L(v''_5)$. There exists a color $x\in L(v_2)\setminus L(v'_2)$, we restrict $L(v_4)$ to $L(v_4)\setminus\{x\}$. We color $v_4$, $v_5$, $v_6$, $v'_6$, $v_7$ by \Cref{subfig:config5}. Then, we color $v'_5$, $v'''_5$, $v''_5$ in this order. Now, observe that by \Cref{fig:forced_non_colorable_clawpath}, we can color $v_0$, $v_1$, $v_2$, $v'_2$, $v_3$ since $x$ is an available color for $v_2$ but not $v'_2$. Note that this coloring procedure works even when $v'''_5$ sees $v_1$.
\hfill\smallqed\medskip

\figureproof{\Cref{subfig:1a0c1a0c0b0a}}
If $v''_6$ sees $v'_2$, then they must be at distance exactly two since there are no $2$-path by \Cref{lemma:no 2-path}. We restrict $L(v_6)$ to $L(v_6)\setminus L(v'''_6)$, then we color $v_6$, $v_4$, $v'_5$, $v_5$, $v_7$, $v_0$, $v_1$, $v_2$, $v_3$, $v'_2$, $v'_6$, $v''_6$, $v'''_6$ in this order. Symmetrically, the same holds when $v'''_6$ sees $v'_2$.

There exists a color $x\in L(v_2)\setminus L(v'_2)$, we restrict $L(v_4)$ to $L(v_4)\setminus\{x\}$. We restrict $L(v_6)$ to $L(v_6)\setminus L(v''_6)$. We color $v_7$, $v_6$, $v_5$, $v'_5$, $v_4$ by \Cref{subfig:config5}. Then, we color $v'_6$, $v'''_6$, $v''_6$ in this order. Now, observe that by \Cref{fig:forced_non_colorable_clawpath}, we can color $v_0$, $v_1$, $v_2$, $v'_2$, $v_3$ since $x$ is an available color for $v_2$ but not $v'_2$.
\hfill\smallqed\medskip

\figureproof{\Cref{subfig:1a0a0c1c0a0b}}
Between $v''_7$ and $v'''_7$, there always exists one vertex that does not see $v'_3$, say $v''_7$. Color $v'_7$ with a color not in $L(v''_7)$. Color $v_5$ with a color not in $L(v'_5)$. Color $v_6$ greedily. Color $v_7$, $v_0$, $v_1$, $v_2$ by \Cref{subfig:config1}. Finish by coloring $v_3$, $v'_3$, $v_4$, $v'_5$, $v'''_7$, $v''_7$ in this order. Note that this coloring procedure works even when $v'''_7$ sees $v'_3$.
\hfill\smallqed\medskip

\figureproof{\Cref{subfig:1a0c0a1b0a0c}}
If $v''_5$ sees $v_1$, then they must be at distance exactly 2 since $G$ has girth $8$. Say $v_8$ is their common neighbor, then $v''_5$, $v_8$, $v_1$, $v_0$, $v_7$, $v'_7$, $v_6$, $_5$, $v'_5$, $v'''_5$ form the reducible configuration from \Cref{subfig:1a0a1c0a0c0c}. Symmetrically, the same holds when $v'''_5$ sees $v_1$.

If $v''_5$ sees $v'_2$, then we restrict $L(v_5)$ to $L(v_5)\setminus L(v'''_5)$. We color $v_3$ with a color not in $L(v_5)$. Color $v_2$, $v_1$, $v_0$, $v_7$, $v'_7$, $v_6$, $v_5$ by \Cref{subfig:config11}. Finish by coloring $v'_2$, $v_4$, $v'_5$, $v''_5$, $v'''_5$ in this order. Symmetrically, the same holds when $v'''_5$ sees $v'_2$.

Now, suppose that \gs. We redefine $S=\{v_4,v_5,v'_5,v''_5,v'''_5\}$ and consider $\phi$ a coloring of $G-S$. Note that the lists of available colors of vertices of $S$ correspond to \Cref{fig:forced_non_colorable_clawpath} as otherwise $\phi$ would be extendable to $G$. We uncolor $v_0$ $v_1$, $v_2$, $v'_2$, $v_3$, $v_6$, $v_7$, $v'_7$. By \Cref{fig:forced_non_colorable_clawpath}, we must have $L(v''_5)=L(v'''_5) = \{a,b,c\}$, $L(v'_5)=\{a,b,c,d\}$ and $\phi(v_6)=d$. Restrict $L(v_6)$ to $L(v_6)\setminus\{d\}$. Color $v_7$ with a color not in $L(v_1)$. Color $v_6$ and $v'_7$ greedily. Observe that  $|L(v_1)\cup L(v_2)\cup L(v'_2)\cup L(v_3)|>3$ since they were colorable with $\phi$, and also note that these vertices do not see $v_6$, $v_7$, $v'_7$. Therefore, by \Cref{fig:forced_non_colorable_clawpath}, we color $v_0$, $v_1$, $v_2$, $v'_2$, $v_3$. What remains is $S$ and since $v_6$ is not colored $d$, we have $ d\in L(v'_5) \cap L(v''_5)$ so $S$ is colorable by \Cref{fig:forced_non_colorable_clawpath}.

\hfill\smallqed\medskip

\figureproof{\Cref{subfig:1a0b0c1a0c0a}}
If $v''_2$ sees $v'_6$, then they must be at distance exactly 2, since there are no $2$-path (\Cref{lemma:no 2-path}). Restrict $L(v_2)$ to $L(v_2)\setminus L(v'_6)$. Color $v_3$ with a color not in $v'_3$. Color $v_2$ and $v_1$ greedily. Color $v_5$, $v_6$, $v_7$, $v_1$ by \Cref{subfig:config1}. Finish by coloring $v'_6$, $v_4$, $v'_3$, $v'_2$, $v''_2$, $v'''_2$ in this order.

Now, suppose that \gs. We redefine $S = \{v_0, v_1, v_2, v'_2, v''_2, v'''_2, v_3, v'_3, v_4\}$ and consider $\phi$ a coloring of $G-S$. If there exists $x\in L(v_4)\notin L(v_3)$, then color $v_4$ with $x$. Color $v'''_2$, $v'_2$, $v''_2$, $v_2$, $v_3$, $v_1$, $v_0$ by \Cref{subfig:config8}. Finish by coloring $v'_3$. Thus, we uncolor $v_5$, $v_6$, $v_7$ and conclude that $L(v'_3)=\{a,b,c\} \subset L(v_4) = \{a,b,c,d,e\}$, $\phi(v_5)=d$ and $\phi(v_6)=e$. Also note that $L(v_5)\neq L(v'_6)$ or we can simply switch $v_5$'s and $v_6$'s colors and $d$ would still be available for $v_4$.

Now, we color $v_6$ with a color different from $d$ and $e$. We color $v_7$ greedily. We color $v_5$ and $v'_6$ (which is possible since $L(v_5)\neq L(v'_6)$. Note that either $d$ or $e$ must still be available for $v_4$ and $d,e\notin L(v'_3)$ so we refer to the above-mentioned coloring.

\hfill\smallqed\medskip

\figureproof{\Cref{subfig:1a0a0c1a0b0c}}
If $v'_3$ sees $v'_7$, then they must be at distance exactly 2 since $G$ has girth $8$. Say $v_8$ is their common neighbor, then $v_0$, $v_7$, $v'_7$, $v_8$, $v'_3$, $v_3$, $v_2$, $v_1$ form the reducible configuration from \Cref{subfig:1a1a1a0a0a}. If $v'_3$ sees $v''_6$, then they share a common neighbor $v_8$ and $v''_6$, $v_8$, $v'_3$, $v_3$, $v_4$, $v_5$, $v_6$, $v'_6$ form the reducible configuration from \Cref{subfig:1a1a1a0a0a}. Symmetrically, the same holds when $v'_3$ sees $v'''_6$.

If $v_2$ sees $v''_6$, then they must be at distance exactly 2. Note that, in this case, $|L(v_2)|\geq 3$ so we can color $v_2$ then $v_3$ such that $v_1$ retains at least 2 available colors. We color $v'_6$ with $x\notin L(v''_6)$. Then, we color $v'_3$, $v_5$, $v_4$ in this order. Afterwards, we color $v_1$, $v_0$, $v_7$, $v'_7$ and $v_6$ by \Cref{subfig:config4}. Finish by coloring $v'''_6$ then $v''_6$ in this order. Symmetrically, the same holds when $v_2$ sees $v'''_6$.

So we have \gs. Observe that in the previous case, it suffices to color $v_2$ such that $v_1$ still retain at least 2 available colors to be able to extend the coloring to $G$. Thus, $L(v_2)=L(v_1)=\{a,b\}$. 

Now, if $a\in L(v'_3)$, then we color $v'_3$ and $v_1$ with $a$. Restrict $L(v_6)$ to $L=L(v_6)\setminus L(v''_6)$. Color $v'_7$ with $v\notin L$. Then, we color $v_0$, $v_7$, $v_6$, $v_5$, $v_4$, $v_3$ with \Cref{subfig:config3}. and finish by coloring $v'_6$, $v'''_6$ and $v''_6$ in this order.

If $a\notin L(v'3)$, color $v_2$ with $a$. By pigeonhole principle, we color $v'_6$ and $v'_7$ with the same color. Then, we color $v''_6$ and $v'''_6$ greedily. Afterwards, we color $v_5$, $v_6$, $v_7$, $v_0$ by \Cref{subfig:config1}. We finish by coloring $v_3$, $v_4$, $v'_3$ in this order.
\hfill\smallqed\medskip

\end{proof}

\end{appendices}

\end{document}